\newtheorem{Th}{Theorem}
\newtheorem{Prop}{Proposition}
\newtheorem{Lm}{Lemma}
\newtheorem{Co}{Corollary}
\theoremstyle{definition}
\newtheorem{Def}{Definition}
\newtheorem{Rem}{Remark}
\newcommand{\aut}{\mathrm{Aut}}
\newcommand{\supp}{\mathrm{supp}}
\newcommand{\st}{\mathrm{St}}
\newcommand{\sym}{\mathrm{Sym}}
\newcommand{\rist}{\mathrm{rist}}
\newcommand{\dd}{\mathrm{d}}
\newcommand{\id}{\mathrm{I}}
\newcommand{\Sp}{\mathrm{Span}}
\newcommand{\eg}{\text{e.g.\,}}
\newcommand{\ie}{\text{i.e.\;\,}}
\newcommand{\etc}{\text{etc}}
\def\blfootnote{\xdef\@thefnmark{}\@footnotetext}
\begin{document}
\title{On spectra of Koopman, groupoid and quasi-regular representations.}
\author{ {\bf Artem Dudko}  \\
                    Stony Brook University, Stony Brook, NY, USA  \\
          artem.dudko@stonybrook.edu \\
         {\bf Rostislav Grigorchuk
         } \\        Texas A\&M University, College Station, TX, USA  \\      grigorch@math.tamu.edu }

\blfootnote{Both authors were supported by the Swiss National Science Foundation}
\blfootnote{The second author was supported by NSF grant DMS-1207699 and NSA grant H98230-15-1-0328}

\date{}

\maketitle

\section{Introduction.}
The study of spectra of operators of unitary group representations has a long history, remarkable achievements and numerous applications. For instance, the famous Kadison-Kaplanski Conjecture which was proven for the case of amenable groups by Higson and Kasparov in \cite{HigsKasp97} asserts that for a torsion free group $G$ and an element $m\in\mathbb C[G]$ of the group algebra of $G$ the spectrum of $\lambda_G(m)$ is connected, where $\lambda_G$ is the left regular representation of $G$. The remarkable Kesten's criterion of amenability and the fundamental property $(T)$ of Kazhdan can be formulated in terms of spectral properties of operators of the form $\lambda_G(m)$. The topic in discussion is related to the spectral theory of graphs and networks, random walks, theory of operator algebras, discrete potential theory, abstract harmonic analysis \etc.

There are three important types of unitary representations associated to a measure class preserving action of a countable group $G$ on a probability space $(X,\mu)$: quasi-regular, Koopman and groupoid representations. The goal of this article is to show that there is a close relation between spectral properties of these three types of representations.


For a subgroup $H<G$ the quasi-regular representation $\rho_{G/H}$ acting on $l^2(G/H)$ is a natural generalization of the regular representation $\lambda_G$. In the case of a group action $(G,X,\mu)$ such representations appear as permutational representations $\rho_x$ in $l^2(Gx)$ for the action of $G$ on orbits $Gx$, $x\in X$. Spectra of quasi-regular representations play an important role in random walks on groups and Schreier graphs (see \eg \cite{Kes59}). 
Quasi-regular representations naturally give rise to Hecke algebras and their representations. 

 The Koopman representation (which we denote by $\kappa$) acts in $L^2(X,\mu)$.
Some important properties of the dynamical system $(G,X,\mu)$, such as ergodicity and weak mixing,  can be reformulated in terms of spectral properties of $\kappa$  (see \eg \cite{BeHaVa}).


If, in addition, $G$ is countable then the groupoid representation $\pi$ is defined in $L^2(\mathcal R,\nu)$, where $\mathcal R\subset X\times X$ is the orbit equivalence relation and $\nu$ is a measure on $\mathcal R$ which is the product of $\mu$ and the counting measure on leaves. Groupoid representations play important role in operator algebras (see \eg \cite{Tak3}) and theory of factor representations and character theory (see \eg \cite{VK} and \cite{DM13 AF}).

 Given a unitary representation $U$ of a group $G$ and an element $m\in\mathbb C[G]$ (or, more generally $m\in l^1(G)$)  define Hecke type operator \begin{equation}\label{EqHecke}U(m)=\sum\limits_{s\in G}m(s)U(s).\end{equation} For an operator $A$ denote by $\sigma(A)$ its spectrum. The main result of the present paper is the following:
\begin{Th}\label{ThMain}
$1)$ For an ergodic measure class preserving action of a countable group $G$ on a standard probability space $(X,\mu)$ and any $m\in\mathbb C[G]$ one has \begin{equation}\label{EqMainCont}\sigma(\kappa(m))\supset\sigma(\rho_x(m))=\sigma(\pi(m))
  \end{equation} for $\mu$-almost all $x\in X$, where $\kappa$ is the Koopman representation, $\pi$ is the groupoid representation associated to the action of $G$ on $X$, $\rho_x$ is the quasi-regular representation associated with the orbit $Gx$.\\
$2)$ If, moreover, $\mu$ is $G$-invariant and non-atomic, then $\sigma(\kappa_0(m))\supset\sigma(\pi(m))$, where $\kappa_0$ is the restriction of $\kappa$ onto the orthogonal complement of constant functions in $L^2(X,\mu)$. \\
$3)$ If, in addition to the conditions of \hskip0.1cm $1)$, $(G,X,\mu)$ is hyperfinite, then \begin{equation}\label{EqMainSim}\sigma(\kappa(m))=\sigma(\pi(m)).\end{equation}
\end{Th}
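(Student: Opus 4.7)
The overall strategy is to realize the groupoid representation $\pi$ as an ergodic model of the quasi-regular representations $\rho_x$ and to transfer approximate eigenvectors between the fibers $l^2(Gx)$ and the ambient spaces $L^2(X,\mu)$, $L^2(\mathcal R,\nu)$ through renormalized indicator functions of small disjoint Borel neighborhoods of finite orbit-sets. For part~(1), the equality $\sigma(\pi(m))=\sigma(\rho_x(m))$ for $\mu$-a.e.\ $x$ comes from the isomorphism
\[L^2(\mathcal R,\nu)\;\simeq\;\int_X^{\oplus} l^2(Gx)\,d\mu(x),\qquad f\longmapsto\bigl(x\mapsto f(x,\cdot)\bigr),\]
which identifies $\pi(g)$, up to the Radon--Nikodym cocycle, with the field $x\mapsto\rho_x(g)$; the map $x\mapsto\sigma(\rho_x(m))$ is $G$-invariant (as $Gx=Ggx$) and hence, by ergodicity, $\mu$-a.e.\ equal to a closed set $S\subset\mathbb C$ which is precisely $\sigma(\pi(m))$. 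To obtain $\sigma(\kappa(m))\supset\sigma(\rho_x(m))$, fix $\lambda\in S$ and a finitely supported approximate eigenvector $\phi=\sum_{y\in F}c_y\delta_y\in l^2(Gx)$ with $\|(\rho_x(m)-\lambda)\phi\|$ small. A Lusin-type regularization of the Borel action provides, for any $\epsilon>0$, pairwise disjoint Borel sets $\{U_y\}_{y\in F}$ of common measure $\epsilon$ such that $\mu(sU_y\,\triangle\,U_{sy})=o(\epsilon)$ for every $s\in\supp(m)$ with $sy\in F$, and such that the Radon--Nikodym cocycle $d(s^{-1}_*\mu)/d\mu$ is nearly constant on $U_y$. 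The renormalized vector $\tilde\phi=\sum_{y\in F}c_y\mathbf 1_{U_y}/\sqrt{\mu(U_y)}\in L^2(X,\mu)$ then satisfies $\|\tilde\phi\|_{L^2}=\|\phi\|_{l^2}$, and a direct computation using both approximations shows $\|(\kappa(m)-\lambda)\tilde\phi\|_{L^2}$ can be made arbitrarily small, placing $\lambda$ in $\sigma(\kappa(m))$.

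Part~(2) is a small modification of the above: under $G$-invariance, $\kappa_0=\kappa|_{\mathbf 1^\perp}$ is well-defined, and $|\langle\tilde\phi,\mathbf 1\rangle|\le\sqrt\epsilon\sum_{y\in F}|c_y|$ can be made arbitrarily small by shrinking $\epsilon$ (non-atomicity of $\mu$ is precisely what makes such small $\epsilon$ available), so projecting $\tilde\phi$ onto $\mathbf 1^\perp$ preserves both its norm and the approximate-eigenvector property. For part~(3), only the reverse inclusion $\sigma(\kappa(m))\subset\sigma(\pi(m))$ is new. Writing $\mathcal R=\bigcup_n\mathcal R_n$ with $\mathcal R_n$ finite and given an approximate eigenvector $\psi\in L^2(X,\mu)$ for $\kappa(m)$, I would pick $n$ so large that $\{(x,sx):x\in X\}\subset\mathcal R_n$ mod null sets for every $s\in\supp(m)$; then $\kappa(m)$ preserves the $\mathcal R_n$-cell decomposition. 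Decomposing $L^2(X,\mu)$ over a Borel cross-section of $\mathcal R_n$ as a direct integral of finite-dimensional cell Hilbert spaces (equipped with their conditional measures), one sees that on each cell $C\subset Gx$ the operator $\kappa(m)$ acts, after the natural rescaling identifying the conditional measure with counting measure on $C$, as the restriction of $\rho_x(m)$ to $l^2(C)$. A Fubini-type argument then selects a cell $C$ on which the local vector is an approximate eigenvector for $\rho_x(m)\in B(l^2(Gx))$, so $\lambda\in\sigma(\rho_x(m))=\sigma(\pi(m))$.

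The main technical obstacle is the construction of the approximately equivariant neighborhood family $\{U_y\}$ in~(1): it requires combining a Lusin/Feldman--Moore regularization of the Borel action with disjointness of small neighborhoods around finitely many points of a single orbit, together with simultaneous control of the Radon--Nikodym cocycle on these sets. Once this construction is in hand, the projection trick delivers~(2) and, combined with the hyperfinite cell decomposition and an orbit-wise Fubini, delivers~(3).
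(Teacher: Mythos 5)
Your overall architecture coincides with the paper's: the identification $\pi\simeq\int_X^{\oplus}\rho_x\,d\mu(x)$ (Proposition \ref{Prop-grupp-equiv-int}) for $\sigma(\pi(m))=\sigma(\rho_x(m))$, the transfer of finitely supported vectors into $L^2(X,\mu)$ via normalized indicators of small, pairwise disjoint, approximately equivariant sets with controlled Radon--Nikodym cocycle (this is exactly what the paper's Lemma \ref{Lm-A-refining} together with the partition into the sets $B_f$ accomplishes), and a cell decomposition coming from hyperfiniteness for part (3). Two of your shortcuts genuinely differ from the paper and are arguably cleaner: you get a.e.-constancy of $\sigma(\rho_x(m))$ from $G$-invariance of $x\mapsto\sigma(\rho_x(m))$ plus ergodicity (modulo checking measurability of this set-valued map), where the paper proves a.e.\ pairwise local isomorphism of orbital graphs (Proposition \ref{PropLocIsom}) and deduces equality of spectra from it (Proposition \ref{Prop-equiv-M}); and in part (2) you simply project $\tilde\phi$ onto $\mathbf 1^\perp$ after shrinking $\epsilon$ (available by non-atomicity), where the paper instead perturbs both the vector $\eta$ and the element $m$ to force $\sum_y\eta(y)=0$.

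There are, however, two genuine gaps. (i) You systematically equate ``$\lambda\in\sigma(A)$'' with ``there is an approximate eigenvector for $\lambda$''. For non-self-adjoint $m$ the operators $\rho_x(m)$ and $\kappa(m)$ are not normal, and spectral points outside the approximate point spectrum admit no approximate eigenvectors (for the unilateral shift, $0$ is in the spectrum but the operator is bounded below); your transfer argument therefore only yields an inclusion of approximate point spectra, not of full spectra as claimed. The paper avoids this by reducing, via Dixmier's criterion (Proposition \ref{CoWeakCond}), every spectral inclusion to the norm inequality for \emph{positive} elements of $\mathbb C[G]$, and in Proposition \ref{Prop-equiv-M} by replacing $\omega(m)-\alpha\id$ with the positive contraction $\id-R^{-2}(\omega(m)-\alpha\id)(\omega(m)-\alpha\id)^{*}$ (Lemma \ref{LmEquivNorm}); you need some such reduction before your indicator-function construction applies. (ii) In part (3), the claim that one can choose $n$ with $(x,sx)\in\mathcal R_n$ for a.e.\ $x$ and all $s\in\supp(m)$ is false in general: an increasing exhaustion by finite subrelations only gives $\mu(\{x:(x,sx)\in\mathcal R_n\ \text{for all}\ s\in\supp(m)\})\to1$, so $\kappa(m)$ does not preserve your cells and the boundary contribution must be estimated quantitatively. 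That error control is the real content of the paper's argument, which takes a single automorphism $U$ generating $\mathcal R$, applies the Rokhlin-type Lemma \ref{ThRokh} to build a tower $C,UC,\ldots,U^{N-1}C$, and discards the levels within distance $L$ of the top and bottom together with the set where $|n_{g^{\pm1}}|>L$; note also that the paper runs this step in the opposite direction, transferring a near-maximizing vector of $\kappa(m)$ into $L^2(\mathcal R,\nu_l)$ to prove $\|\kappa(m)\|\le\|\tilde\pi(m)\|$ for positive $m$, rather than extracting from an approximate eigenvector of $\kappa(m)$ one for some $\rho_x(m)$.
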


This result has an interpretation in terms of weak containment of representations. Given a unitary representation $\rho$ let $C_\rho$ denote the $C^*$-algebra generated by operators $\rho(g),g\in G$.  Recall that a unitary representation $\rho$ of a group $G$ is weakly contained in a unitary representation$\eta$ (denoted by $\rho\prec\eta$) if there exists a surjective homomorphism $\phi:C_\eta\to C_\rho$ of $C^*$-algebras such that $\phi(\eta(g))=\rho(g)$ for all $g\in G$. We write $\rho\sim\eta$ if $\rho$ is weakly equivalent to $\eta$ (\ie $\rho\prec\eta$ and $\eta\prec\rho$). An action $(G,X,\mu)$ of a countable group $G$ is called hyperfinite if the orbit equivalence relation associated to this action is hyperfinite with respect to $\mu$ (see \eg \cite{FM1}). Theorem \ref{ThMain} can be formulated in terms of weak containment. Namely, \eqref{EqMainCont} means that
$$\kappa\succ\rho_x\sim\pi$$ for $\mu$-almost all $x\in X$ and \eqref{EqMainSim} means that  $\kappa\sim\pi$.

As an application of relations between spectra of representations we describe the spectra of the torsion group $\mathcal{G}=<a,b,c,d>$ of intermediate growth constructed by the second author in \cite{Gr80} and studied in \cite{Gr84} and other papers. Recall that $\mathcal{G}$ acts naturally on the boundary $\partial T$ of a binary rooted tree $T$ (see \eg \cite{Grig11}). We prove the following:
\begin{Th}\label{ThGammaSpec}
The spectrum of the Cayley graph of $\mathcal{G}$ $($\ie the spectrum of $\lambda_\mathcal{G}(a+b+c+d))$ is $[-2,0]\cup [2,4]$ and coincides with the spectrum of the Schreier graph $\Gamma_x$ of the action of $\mathcal G$ on $\partial T$ for any $x\in\partial T$ (\ie with the spectrum of $\rho_x(a+b+c+d)$).\end{Th}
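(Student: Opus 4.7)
The plan is to derive the theorem by combining Theorem \ref{ThMain}, the amenability of $\mathcal{G}$, and the Bartholdi-Grigorchuk computation of the spectrum of $a+b+c+d$ in the Koopman (equivalently Schreier) representation.

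First I would apply Theorem \ref{ThMain} to the action $(\mathcal{G},\partial T,\mu)$ with $\mu$ the uniform Bernoulli measure. The measure is $\mathcal{G}$-invariant and non-atomic; the action is ergodic because the level-$n$ actions are transitive; and the orbit relation is hyperfinite by Connes-Feldman-Weiss, since $\mathcal{G}$ is amenable. Parts $1)$ and $3)$ of Theorem \ref{ThMain} then give
\[\sigma(\kappa(m))=\sigma(\pi(m))=\sigma(\rho_x(m))\qquad\text{for $\mu$-a.e.\ }x\in\partial T\]
and every $m\in\mathbb{C}[\mathcal{G}]$.

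Next I would transfer to the Cayley graph and promote the almost-every conclusion to every $x$. Amenability of $\mathcal{G}$ gives $\rho_x\prec\lambda_\mathcal{G}$ for each $x$, hence $\sigma(\rho_x(m))\subset\sigma(\lambda_\mathcal{G}(m))$. The reverse inclusion $\sigma(\lambda_\mathcal{G}(m))\subset\sigma(\rho_x(m))$ is the substantive direction: I would argue it by showing that $\rho_x$ is a faithful representation of $C^*_r(\mathcal{G})=C^*(\mathcal{G})$ for every $x\in\partial T$, exploiting the self-similar wreath-product structure of $\mathcal{G}$ and the faithfulness of the action on $\partial T$. This yields $\sigma(\lambda_\mathcal{G}(m))=\sigma(\rho_x(m))$ for every $x$. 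Finally, the Bartholdi-Grigorchuk computation identifies this common spectrum at $m=a+b+c+d$ as $[-2,0]\cup[2,4]$, by a self-similar Schur-complement renormalization of the operator.

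The main obstacle is the reverse inclusion in the previous paragraph. The abstract machinery of Theorem \ref{ThMain} yields equality among Schreier, Koopman, and groupoid spectra only almost surely and does not address the Cayley spectrum directly; upgrading to every $x\in\partial T$ and incorporating $\lambda_\mathcal{G}$ forces one outside the general group-action framework and requires exploiting the specific self-similar structure of $\mathcal{G}$.
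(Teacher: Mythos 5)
There is a genuine gap at exactly the point you flag as ``the main obstacle'': the reverse inclusion $\sigma(\lambda_\mathcal{G}(m))\subset\sigma(\rho_x(m))$. Your proposed route --- showing that $\rho_x$ is a faithful representation of $C^*_r(\mathcal{G})=C^*(\mathcal{G})$ --- is only asserted, not proved, and faithfulness of the group \emph{action} on the orbit $Gx$ does not give faithfulness of the representation at the $C^*$-algebra level. Moreover, if $\rho_x$ were faithful on $C^*(\mathcal{G})$, then $\sigma(\lambda_\mathcal{G}(m))=\sigma(\rho_x(m))$ would hold for \emph{every} $m\in\mathbb C[\mathcal{G}]$; but the paper explicitly poses as an open question whether $\sigma(\lambda_\mathcal{G}(ta+ub+vc+wd))$ and $\sigma(\rho_x(ta+ub+vc+wd))$ coincide for arbitrary parameters, and cites \cite{GLN} showing that $\sigma(\rho_x(ta+ub+vc+wd))$ is a Cantor set of measure zero for generic parameters. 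So the strategy you sketch would prove far more than Theorem \ref{ThGammaSpec}, which is strong evidence that it cannot be carried out by soft arguments; it is not a step one can wave through.

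The paper's actual proof avoids weak containment of $\lambda_\mathcal{G}$ in $\rho_x$ altogether and instead bounds $\sigma(\lambda_\mathcal{G}(a+b+c+d))$ from \emph{above} by a direct algebraic argument special to this element: the relation $(b+c+d-e)^2=4e$ in $\mathbb C[\mathcal{G}]$ lets one write, for any unitary representation $\rho$,
$$\rho\bigl(-\alpha a+b+c+d-(\beta+1)e\bigr)=-\alpha A+2U-\beta\mathrm{I},$$
where $A=\rho(a)$ and $U=\rho(\tfrac12(b+c+d-e))$ are involutions; elementary norm estimates then show this operator is invertible whenever $(\alpha,\beta)$ lies outside the region $\Omega=\{||\alpha|-|\beta||\leqslant 2,\ ||\alpha|+|\beta||\geqslant 2\}$ (Lemma \ref{LmSigmaAny}). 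Intersecting with the line $\alpha=-1$ gives $\sigma(\lambda_\mathcal{G}(a+b+c+d))\subset[-2,0]\cup[2,4]$. The lower bound is then the easy containment: $\mathcal{G}$ is amenable, so $\rho_x\prec\lambda_\mathcal{G}$, and Theorem \ref{ThBGDelta} (Bartholdi--Grigorchuk) gives $\sigma(\rho_x(a+b+c+d))=[-2,0]\cup[2,4]$ for \emph{all} $x\in\partial T$, which forces equality. Note also that your invocation of Theorem \ref{ThMain} and hyperfiniteness is not needed here: Theorem \ref{ThBGDelta} already identifies $\sigma(\rho_x(\Delta))=\sigma(\kappa(\Delta))$ for every $x$, and the a.e.\ statements of Theorem \ref{ThMain} would not by themselves cover every $x\in\partial T$ as the theorem claims. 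To repair your argument you would need to replace the faithfulness claim by the two-involutions upper bound (or some other explicit estimate on $\|\lambda_\mathcal{G}(m)\|$ valid for the specific element $a+b+c+d$).
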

\noindent In fact, our proof shows that the spectra of $\lambda_\mathcal{G}(t a+b+c+d)$ and $\rho_x(t a+b+c+d)$ coincide for any $t\in\mathbb R$ and almost every $x\in\partial T$ and are equal to a union of two intervals, an interval, or two points. In \cite{GLN} the authors studied the operator $\rho_x(ta+ub+vc+wd)$ for parameters $t,u,v,w\in\mathbb R$ such that $t\neq 0$, $u\neq -v$, $u\neq -w$, $v\neq -w$ and at least two of $v,w,t$ are distinct. They showed that the spectrum of $\rho_x(ta+ub+vc+wd)$ is a Cantor set of Lebesgue measure zero by reduction to the results known for random Schr\"odinger operators and substitutional dynamical systems. 
The corresponding substitution
 $$\tau:a\to aca,b\to d,c\to b, d\to c$$ appears in the presentation
 $$\mathcal G=<a,b,c,d|a^2,b^2,c^2,d^2,bcd,\tau^i((ad)^4),\tau^i((adacac)^4)>$$ found by Lysenok in \cite{Lys}. An interesting question is whether the spectra of $\lambda_\mathcal{G}(ta+ub+vc+wd)$ and $\rho_x(ta+ub+vc+wd)$ coincide for arbitrary parameters $t,u,v,w\in\mathbb R$.

 Notice that there are not many examples of groups for which the spectrum of the Cayley graph has been calculated. Theorem \ref{ThMain} is the first case when the spectrum of the Cayley graph is computed for a group of intermediate growth. The coincidence of the spectra of Schreier graphs $\Gamma_x$ of the action of $\mathcal G$ on $\partial T$ and the Cayley graph of $\mathcal G$ is very surprising since the $\Gamma_x$ are of linear growth and are very different from the Cayley graph of $\mathcal G$.

 Observe that $\mathcal G$ has an abelian extension $\tilde{\mathcal G}$ which is a torsion free group of intermediate growth generated by four elements $\tilde a,\tilde b,\tilde c,\tilde d$ (see \cite{Gr84}). From the amenability of $\tilde{\mathcal G}$ and Proposition 3.7 from \cite{BG} (based on a result of Higson and Kasparov) it follows that the spectrum of the Cayley graph of $\tilde{\mathcal G}$ is $[-4,4]$.

Theorem \ref{ThMain} has many applications. Among them let us indicate an application to the theory of representations of branch and weakly branch groups. Branch  groups play  important  role  in many  investigations  in and around group theory  (see \eg \cite{Gr00}, \cite{BGS03} and \cite{Grig11}). The class of branch groups  contains  groups  of  intermediate  growth, amenable  but  not  elementary  amenable  groups,  groups  with  finite  commutator  width  \etc.   Weakly branch  groups are a natural generalization of  the  class  of  branch  groups playing  important  role in holomorphic  dynamics (see \cite{Nekr}) and  in the  theory  of  fractals  (see \cite{GNS15}). In Section \ref{SubsecAp} using Theorem \ref{ThMain} and results of \cite{DuGr15} and \cite{BG} we show (Corollary \ref{CoWeak}) that any subexponentially bounded weakly branch group $G$ admits uncountably many pairwise disjoint (not unitarily equivalent) pairwise weakly equivalent irreducible representations.

We finish the paper by presenting two examples of computation of spectra of Hecke type operators associated to the action of $\mathcal{G}$ on the boundary of a binary rooted tree. These examples illustrate the method of operator recursions used in  \cite{BG} and other places.


\section{Preliminaries.}
In this section we give necessary preliminaries. We deal with actions of countable groups on a standard probability space. A probability space is standard, if it is isomorphic modulo zero measure to an interval with Lebesgue measure, a finite or countable set of atoms, or a combination (disjoint union) of both. We refer the reader to \cite{Rokh} or \cite{Glas03} for details.

\subsection{Koopman and quasi-regular representations.}\label{SubsecKoopBranch}
A natural type of representations that one can associate to a measure-preserving action of a group $G$ on a measure space $(X,\mu)$, where $\mu$ is a quasi-invariant probability measure, is the Koopman representation $\kappa$ of $G$ in $L^2(X,\mu)$ acting by:$$(\kappa(g)f)(x)=\sqrt{\frac{\dd\mu(g^{-1}(x))}{\dd\mu(x)}}f(g^{-1}x),$$ where the expression under the radical is the Radon-Nikodym derivative. This representation is important due to the fact that the spectral properties of $\kappa$ reflect the dynamical properties of the action such as ergodicity and weak-mixing. One of the most natural questions concerning Koopman representations is whether it is irreducible.
 There are several examples of group actions with quasi-invariant measures known for which $\kappa$ is irreducible (see \eg \cite{BM11}, \cite{BC02}, \cite{CS91}, \cite{FTP83}, \cite{FTS94} and \cite{KuSt}), but typically this representation (or its "brother" $\kappa_0$) is not irreducible. In \cite{DuGr15} we constructed a new class of examples of irreducible Koopman representations arising from subexponential actions of weakly branch groups on boundaries of rooted trees.

 Recall that for $H<G$ a quasi-regular representation $\rho_{G/H}$ is a permutational representation of $G$ in $l^2(G/H)$ given by the natural action of $G$ on the set of left cosets $gH$. Given a countable group acting on a set $X$ and a point $x\in X$ one can define the quasi-regular representation $\rho_x$ in $l^2(Gx)$, where $Gx$ is the orbit of $x$,  by:
$$(\rho_x(g)f)(y)=f(g^{-1}y).$$ It is clear that $\rho_x$ is unitary equivalent to $\rho_{G/\st_G(x)}$, where $\st_G(x)$ is the stabilezer of $x$ in $G$. Notice that the isomorphism class of $\rho_x$ depends only on the stabilizer $\st_G(x)$ of $x$.

The question of irreducibility and disjointness of quasi-regular representations was studied by Mackey in \cite{Mack}. Using his criterion in \cite{BG} Bartholdi and the second author proved that for a weakly branch group $G$ and any $x\in\partial T$ the quasi-regular representation $\rho_x$ is irreducible. In addition, in \cite{DuGr15} the authors of the present paper showed that the representations $\rho_x$ associated to an action of a weakly branch group on the boundary of a rooted tree for $x\in\partial T$ from different orbits are pairwise disjoint (not unitary equivalent). In Section \ref{SubsecAp} we use Theorem \ref{ThMain} to strengthen this result for subexponentially bounded groups (Corollary \ref{CoWeak}).
\subsection{Groupoid representations and Hecke type operators.}\label{SubsecReps}
Here we briefly recall the construction of a  groupoid representation (see \cite{FM2} and \cite{Tak3} for details).
 As before, let $(X,\mu)$ be a standard probability space with a measure class preserving action of a countable group $G$ on it.
  Denote by $\mathcal{\mathcal{R}}$  the orbit equivalence relation on $X$. For $A\subset X^2$ and $x\in X$ set $A_x=A
 \cap (X\times\{x\}),A^x=A\cap (\{x\}\times X)$. Introduce measures $\nu_l,\nu_r$ on $\mathcal{\mathcal{R}}\subset X^2$ by
 $$\nu_l(A)=\int\limits_X |A^x|d\mu(x),\;\;\nu_r(A)=\int\limits_X |A_x|d\mu(x).$$ Notice
 that if $\mu$ is invariant with respect to $G$ then $\nu_l=\nu_r$. If $\mu$ is only quasi-invariant with respect to $G$ then the Radon-Nikodym derivative
 $D(x,y)=\tfrac{\dd\nu_l}{\dd\nu_r}(x,y)$ is defined and the relation
 $$\tfrac{\dd\mu(gx)}{\dd\mu(x)}=D(gx,x)\;\;\text{for each}\;\;g\in G\;\;\text{and $\mu$-almost all}\;\;x\in X$$ holds (see \cite{FM1}).
 The (left) groupoid representation of $G$ is the unitary representation $\pi$
  in $L^2(\mathcal{\mathcal{R}},\nu_r)$ defined by
$$(\pi(g)f)((x,y))=f(g^{-1}x,y).$$



   The next statement is of folklore type and is mentioned, for example, in \cite{Tak3}, \S 2.
 \begin{Prop}\label{Prop-grupp-equiv-int}
 The groupoid representation $\pi$ is unitarily equivalent to the representation $\int_X \rho_x d\mu(x)$.
\end{Prop}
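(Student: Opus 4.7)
My plan is to build an explicit unitary equivalence by slicing $\mathcal{R}$ along its second coordinate. The definition $\nu_r(A)=\int_X |A_y|\,d\mu(y)$ (where I relabel the fiber index by $y$ to distinguish it visually from the first coordinate) already exhibits $\nu_r$ as the disintegration of $\mu$ against counting measure on the fibers $\mathcal{R}_y=\{(z,y):z\in Gy\}$; each such fiber is canonically identified with the orbit $Gy$ via the projection $(z,y)\mapsto z$.

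With that identification in hand, I would define $U:L^2(\mathcal{R},\nu_r)\to\int_X l^2(Gy)\,d\mu(y)$ by $(Uf)(y)=f(\cdot,y)\in l^2(Gy)$. Disintegration immediately gives
\begin{equation*}
\int_{\mathcal{R}}|f|^2\,d\nu_r=\int_X\sum_{z\in Gy}|f(z,y)|^2\,d\mu(y)=\int_X\|(Uf)(y)\|_{l^2(Gy)}^2\,d\mu(y),
\end{equation*}
so $U$ is an isometry, and the same identity applied to elementary sections shows $U$ is onto. The intertwining property is then a one-line check: for every $g\in G$,
\begin{equation*}
(U\pi(g)f)(y)(z)=(\pi(g)f)(z,y)=f(g^{-1}z,y)=\bigl(\rho_y(g)\,f(\cdot,y)\bigr)(z)=\bigl(\rho_y(g)(Uf)(y)\bigr)(z),
\end{equation*}
so $U\pi(g)=\bigl(\int_X \rho_y(g)\,d\mu(y)\bigr)U$, which is the claim (after relabeling $y$ by $x$).

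The step I expect to require the most care is setting up the measurable field of Hilbert spaces $\{l^2(Gy)\}_{y\in X}$ so that the direct integral on the right-hand side is literally well-defined in the sense of Dixmier. The natural convention is to declare a section $y\mapsto\xi(y)\in l^2(Gy)$ measurable iff $(z,y)\mapsto\xi(y)(z)$ is Borel on $\mathcal{R}$. To verify that this produces a legitimate measurable field, I would invoke the Feldman--Moore theorem: the countable Borel equivalence relation $\mathcal{R}$ is covered by the graphs of countably many Borel involutions $\varphi_n:X\to X$, and the indicator sections $y\mapsto\delta_{\varphi_n(y)}\in l^2(Gy)$ then form a fundamental sequence generating the measurable structure. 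Once this bookkeeping is in place, the remaining content of the proposition is essentially Fubini.
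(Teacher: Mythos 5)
Your proof is correct. Note that the paper itself offers no proof of this proposition --- it is stated as ``of folklore type'' with a pointer to Takesaki --- so there is nothing to compare against except the standard argument, which is exactly what you have written out: slice $\mathcal R$ along the second coordinate, observe that $\nu_r$ is by definition the integral over $y$ of counting measure on the fiber $\mathcal R_y\cong Gy$, and check that $\pi(g)$ acts fiberwise as $\rho_y(g)$ (no Radon--Nikodym factor is needed precisely because the fiber measure is counting measure and $z\mapsto g^{-1}z$ permutes $Gy$). Your isometry, surjectivity and intertwining computations are all right. One small simplification: you do not need the Feldman--Moore theorem to produce the fundamental sequence of sections. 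Since $G$ is countable and $\mathcal R$ is the orbit equivalence relation of the $G$-action, the graphs of the Borel maps $y\mapsto s_n y$ (for an enumeration $G=\{s_1,s_2,\ldots\}$) already cover $\mathcal R$, so the sections $y\mapsto\delta_{s_n y}$ serve as the fundamental family; Feldman--Moore is the right tool when one starts from an abstract countable Borel equivalence relation, but here the group action is given. With that cosmetic change the argument is complete.
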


Similarly to representation $\pi$ of $G$ in $L^2(\mathcal R,\nu_r)$ one can introduce a representation $\tilde\pi$ of $G$ in $L^2(\mathcal R,\nu_l)$ by
\begin{equation}\label{EqTilPi}(\tilde\pi(g)f)(x,y)=
\sqrt{\tfrac{\dd\mu(g^{-1}x)}{\dd\mu(x)}}f(g^{-1}x,y).\end{equation}
It is straightforward to verify that the representation $\tilde\pi$ is unitarily equivalent to $\pi$ via the intertwining isometry $\mathcal I:L^2(\mathcal R,\nu_r)\to L^2(\mathcal R,\nu_l)$ given by:
$$(\mathcal I f)(x,y)=\tfrac{1}{\sqrt{D(x,y)}} f(x,y).$$ The latter is well-defined since $D(x,y)\neq 0$ for  $\nu_r$-almost all $(x,y)\in\mathcal R$.


Let $U$ be a unitary representation of a countable group $G$ and $m\in\mathbb C[G]$, that is $m:G\to \mathbb C$ is a function of finite support. One can associate to $m$ a Hecke type operator \eqref{EqHecke}.  Additionally, given $\nu\in l^1(G)$ one can associate to it an operator $$U(\nu)=\sum\limits_{s\in G}\nu(s)U(s).$$ An interesting particular case is when $\nu$ is a measure on $G$ (\ie $\nu\in l^1(G)$ with $\nu(s)\geqslant 0$ for all $s\in G$).
For the case of quasi-regular representations spectral properties of these type operators are related to properties of random walks on graphs.


\subsection{Weak containment and spectrum of operators.}\label{SubsecWeak}
 Let $\rho$ and $\eta$ be two unitary representations of a group $G$ acting on Hilbert spaces $\mathcal H_\rho$ and $\mathcal H_\eta$ correspondingly. Then $\rho$ is weakly contained in $\eta$ (denoted by $\rho\prec\eta$) if for any $\epsilon>0$, any finite subset $S\subset G$ and any vector $v\in \mathcal H_\rho$ there exists a finite collection of vectors $w_1,\ldots,w_n\in\mathcal H_\eta$ such that  $$|(\rho(g)v,v)-\sum\limits_{i=1}^n(\eta(g)w_i,w_i)|<\epsilon$$ for all $g\in S$ (see \eg \cite{BeHaVa} for details).

In \cite{Dix69} Dixmier showed that for two unitary representations $\rho,\eta$ of a countable group $G$ one has $\rho\prec\eta$ if and only if $\|\rho(\nu)\|\leqslant \|\eta(\nu)\|$ for every $\nu\in l^1(G)$.
His result implies the following well known fact:
\begin{Prop}\label{CoWeakCond} Let $\rho,\eta$ be two unitary representations of a discrete group $G$. Then the following conditions are equivalent:
\begin{itemize}
\item[$1)$] $\rho\prec\eta$;
\item[$2)$] $\sigma(\rho(\nu))\subset\sigma(\eta(\nu))$ for all $\nu\in l^1(G)$;
\item[$3)$] $\|\rho(m)\|\leqslant \|\eta(m)\|$ for every positive $m\in \mathbb C[G]$.
\item[$4)$] there exists a surjective homomorphism $\phi:C_\eta\to C_\rho$ such that $\phi(\eta(g))=\rho(g)$ for all $g\in G$.
\end{itemize}
\end{Prop}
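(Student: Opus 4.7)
My plan is to close the circle (1) $\Rightarrow$ (4) $\Rightarrow$ (2) $\Rightarrow$ (3) $\Rightarrow$ (1), using Dixmier's criterion quoted just above the proposition: $\rho\prec\eta$ iff $\|\rho(\nu)\|\leqslant\|\eta(\nu)\|$ for all $\nu\in l^1(G)$. For (1) $\Rightarrow$ (4), Dixmier's inequality makes the assignment $\eta(m)\mapsto\rho(m)$ on the dense $*$-subalgebra $\eta(\mathbb{C}[G])\subset C_\eta$ both well-defined (since $\eta(m)=0$ forces $\rho(m)=0$) and norm-decreasing; it is automatically a $*$-algebra homomorphism, so it extends by continuity to a $*$-homomorphism $\phi:C_\eta\to C_\rho$ with $\phi(\eta(g))=\rho(g)$, and surjectivity is immediate because the image is a $C^*$-subalgebra containing the dense set $\rho(\mathbb{C}[G])$.

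For (4) $\Rightarrow$ (2) I would invoke the standard fact that a $*$-homomorphism of $C^*$-algebras cannot enlarge spectra (after adjoining units if needed): if $\eta(\nu)-z$ is invertible in $C_\eta$, then its image $\rho(\nu)-z$ under $\phi$ is invertible in $C_\rho$, so $\sigma(\rho(\nu))\subset\sigma(\eta(\nu))$ for every $\nu\in l^1(G)$. For (2) $\Rightarrow$ (3), observe that when $m\in\mathbb{C}[G]$ is positive, the operators $\rho(m)$ and $\eta(m)$ are positive self-adjoint, hence their operator norms coincide with their spectral radii; the spectrum containment then yields $\|\rho(m)\|\leqslant\|\eta(m)\|$.

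The remaining step (3) $\Rightarrow$ (1) is where the main subtlety lies. Given $\nu\in\mathbb{C}[G]$, the convolution $\nu^**\nu$ is a positive element of $\mathbb{C}[G]$ in the $C^*$-algebraic sense, since $U(\nu^**\nu)=U(\nu)^*U(\nu)\geqslant 0$ for every unitary representation $U$; hypothesis (3) therefore gives $\|\rho(\nu)\|^2=\|\rho(\nu^**\nu)\|\leqslant\|\eta(\nu^**\nu)\|=\|\eta(\nu)\|^2$. To upgrade the inequality from $\mathbb{C}[G]$ to all of $l^1(G)$, I would approximate $\nu\in l^1(G)$ by $\nu_k\in\mathbb{C}[G]$ in $l^1$-norm and use the universal bound $\|U(\xi)\|\leqslant\|\xi\|_1$, valid for every unitary $U$; Dixmier's criterion then delivers $\rho\prec\eta$. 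The delicate point is reading ``positive $m$'' in (3) in the $C^*$-algebraic sense (equivalently, $m=n^**n$ for some $n\in\mathbb{C}[G]$, so that $U(m)\geqslant 0$ for every unitary $U$), which is the only interpretation under which (3) genuinely encodes the full $C^*$-norm comparison and the circle closes.
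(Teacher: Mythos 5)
Your proof is correct. The paper offers no argument for this proposition beyond the remark that it follows from Dixmier's criterion, and your cycle $(1)\Rightarrow(4)\Rightarrow(2)\Rightarrow(3)\Rightarrow(1)$ is exactly the standard derivation the authors have in mind: Dixmier's inequality gives the well-defined, norm-decreasing $*$-homomorphism on the dense subalgebra $\eta(\mathbb C[G])$ (note $C_\eta$ is unital since $\eta(e)=\id$, so the spectral-permanence step in $(4)\Rightarrow(2)$ is clean), and your reading of ``positive'' as $m=n^{*}*n$ is the one the paper itself endorses and is what makes $(3)\Rightarrow(1)$ close via the $C^{*}$-identity and $l^1$-approximation.
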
\noindent
Here positiveness of an element $m$ of some $C^{*}$-algebra $A$ means that it can be represented as $m=x^{*}x$ with $x\in A$. Equivalently, $m$ is positive if it is self-adjoint ($m=m^{*}$) and $\sigma(m)\subset [0,\infty)$.

For an action of a countable group $G$ on a measure space $(X,\mu)$ denote by $\mathcal R=\mathcal R_{G,X}$ the equivalence relation generated by $G$ on $X$. In $1977$ Zimmer introduced a notion of amenability of ergodic action of $G$ on a measure space $(X,\mu)$ with a quasi-invariant probability measure $\mu$.  Later Adams, Eliott and Giordano \cite{AEG94} showed that Zimmer's amenability is equivalent to the following two conditions:

$1)$ $\mathcal R_{G,X}$ is $\mu$-hyperfinite (\ie on a set of full measure it is equal to a union of finite measurable equivalence relations);

$2)$ for $\mu$-almost all $x\in X$ the stabilizer $\st_G(x)$ is amenable.

Observe that condition $1)$ is equivalent to the following (see \eg \cite{FM1}, Proposition 4.1):

$1')$ on a set of full measure $\mathcal R_{G,X}$ coincides with $\mathcal R_{\mathbb Z,X}$ for some action of the group of integers $\mathbb Z$ on $(X,\mu)$ by measure class preserving transformations.


\begin{Th}[Kuhn]\label{ThKuhn} For an ergodic Zimmer amenable measure class preserving action of $G$ on a probability measure space $(X,\mu)$ one has $$\kappa\prec \lambda_G,$$ where $\kappa$ is the Koopman representation associated to the action of $G$ and $\lambda_G$ is the regular representation.
\end{Th}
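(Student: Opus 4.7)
The plan is to decompose the desired $\kappa\prec\lambda_G$ into the chain $\kappa\prec\pi\prec\lambda_G$, using the two components of the Adams--Eliott--Giordano characterization separately. For $\pi\prec\lambda_G$, I would start from condition $2)$ giving amenability of $\st_G(x)$ for $\mu$-a.e.\ $x$. Hulanicki's theorem then yields $1_{\st_G(x)}\prec\lambda_{\st_G(x)}$, and Fell's continuity of induction (see \cite{BeHaVa}) gives $\rho_x\cong\mathrm{Ind}_{\st_G(x)}^G 1\prec\mathrm{Ind}_{\st_G(x)}^G\lambda_{\st_G(x)}\cong\lambda_G$ for a.e.\ $x$. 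Combining with Proposition \ref{Prop-grupp-equiv-int} and the fact that $\|\pi(m)\|\leqslant\mathrm{ess\,sup}_x\|\rho_x(m)\|$ passes through direct integrals, one gets $\pi\prec\lambda_G$ via Dixmier's criterion.

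For $\kappa\prec\pi$ I would use condition $1')$ to realize $\mathcal R_{G,X}=\mathcal R_{\mathbb Z,X}$ almost everywhere for some measure-class-preserving generator $T$, and work with the equivalent representation $\tilde\pi$ on $L^2(\mathcal R,\nu_l)$ from \eqref{EqTilPi}, whose Radon--Nikodym factor matches the one in $\kappa$. Fix $f\in L^2(X,\mu)$ and for $N\geqslant 1$ define
$$\Psi_N(x,y)=\frac{f(x)}{\sqrt N}\,\mathbf 1_{F_N(y)}(x),\qquad F_N(y)=\{y,Ty,\ldots,T^{N-1}y\}\subset Gy.$$
A direct computation using $\{y:x\in F_N(y)\}=\{T^{-k}x:0\leqslant k\leqslant N-1\}$ gives $\|\Psi_N\|_{\nu_l}=\|f\|_{L^2(X)}$ exactly. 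Introducing the a.e.\ defined integer cocycle $c_g:X\to\mathbb Z$ by $gx=T^{c_g(x)}x$, one finds
$$(\tilde\pi(g)\Psi_N,\Psi_N)_{\nu_l}=\int_X\sqrt{\tfrac{\dd\mu(g^{-1}x)}{\dd\mu(x)}}\,f(g^{-1}x)\overline{f(x)}\cdot\frac{\max(N-|c_{g^{-1}}(x)|,\,0)}{N}\,\dd\mu(x),$$
since the count of valid $y$ evaluates to $N-|c_{g^{-1}}(x)|$ when $|c_{g^{-1}}(x)|<N$. As $c_{g^{-1}}(x)$ is almost everywhere finite, the last factor tends to $1$ pointwise, and dominated convergence yields $(\tilde\pi(g)\Psi_N,\Psi_N)\to(\kappa(g)f,f)$. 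Applying Proposition \ref{CoWeakCond} gives $\kappa\prec\tilde\pi\sim\pi$, completing the chain.

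The main subtlety, in my view, is making the right choice of realization so that everything lines up explicitly. Choosing $\tilde\pi$ on $L^2(\mathcal R,\nu_l)$ rather than $\pi$ on $L^2(\mathcal R,\nu_r)$ is precisely what makes the Radon--Nikodym cocycle in $\kappa$ coincide with the one in the approximating representation, avoiding the need to invoke Hopf-type ratio ergodic theorems for the non-singular $\mathbb Z$-action. With this choice the entire F{\o}lner-type argument collapses to the elementary identity above, and amenability of $\mathbb Z$ enters only through the trivial fact that $|c_{g^{-1}}(x)|/N\to 0$ for a.e.\ $x$.
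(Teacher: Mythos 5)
Your proposal is correct, but it takes a genuinely different route from the paper's. The paper derives Kuhn's theorem in a few lines from Theorem \ref{ThMain}: Zimmer amenability gives hyperfiniteness, so parts $1)$ and $3)$ yield $\kappa\sim\rho_x$ for a.e.\ $x$ (this is where ergodicity and the machinery of locally isomorphic orbital graphs, the direct integral, and the Rokhlin-type Lemma \ref{ThRokh} all enter), and then the Adams--Elliott--Giordano amenability of $\st_G(x)$ together with the fact that $\rho_{G/H}\prec\lambda_G$ iff $H$ is amenable gives $\rho_x\prec\lambda_G$, hence $\kappa\prec\lambda_G$. You instead factor through the groupoid representation, $\kappa\prec\pi\prec\lambda_G$. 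Your second half is the same use of amenable stabilizers, just routed through the direct integral $\pi\cong\int\rho_x\,\dd\mu(x)$ of Proposition \ref{Prop-grupp-equiv-int} and the norm bound $\|\pi(m)\|\leqslant\mathrm{ess\,sup}_x\|\rho_x(m)\|$ rather than through a single weakly equivalent $\rho_x$. Your first half, $\kappa\prec\tilde\pi$, is a clean self-contained substitute for the relevant direction of Proposition \ref{PropKoopGroup}: by spreading the test vector over the backward $T$-orbit segment of length $N$ with a $1/\sqrt N$ normalization, you avoid the Rokhlin tower entirely, and the factor $\max(N-|c_{g^{-1}}(x)|,0)/N\to 1$ does the work that the tower's boundary estimates do in the paper. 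What this buys: your argument needs neither ergodicity nor the a.e.\ constancy of $\sigma(\rho_x(m))$, only hyperfiniteness plus a.e.-amenable stabilizers, so it establishes the statement under nominally weaker hypotheses and with less machinery. Two small points to record for completeness: the exact identity $\|\Psi_N\|_{\nu_l}=\|f\|_{L^2(X)}$ requires the points $T^{-k}x$, $0\leqslant k\leqslant N-1$, to be distinct, i.e.\ $T$ aperiodic, so the (trivial) finite-orbit case should be split off as the paper does at the start of the proof of Proposition \ref{PropKoopGroup}; and the dominated convergence step should name the $L^1$ majorant $|(\kappa(g)f)(x)||f(x)|$ supplied by Cauchy--Schwarz. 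Neither is a gap.
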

\noindent At the end of Section \ref{SubsecKoopGroup} we will derive Kuhn's Theorem from part $2)$ of Theorem \ref{ThMain}. We refer the reader to \cite{AnDel03} for a generalization of Kuhn's Theorem for locally compact groups $G$. 

Another result related to Theorem \ref{ThMain}  is the following (see \cite{Pichot}, Theorem 30):
\begin{Th}[Pichot]\label{ThPichot} A measure class preserving action of a countable group $G$ on a standard probability space $(X,\mu)$ is hyperfinite if and only if for every $m\in l^1(G)$ with $\|m\|_1=1$ one has $\|\pi(m)\|=1$, where $\pi$ is the corresponding groupoid representation.
\end{Th}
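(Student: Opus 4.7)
My plan is to recognize the condition $\|\pi(m)\|=1$ as a weak containment of the trivial representation of the orbit groupoid in the regular representation $\pi$, and then to link this weak containment to hyperfiniteness via the Connes--Feldman--Weiss theorem. The statement should be read for positive $m$ (i.e., finitely supported probability measures on $G$), where the inequality $\|\pi(m)\|\leqslant\|m\|_1$ can be tight.

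Since each $\pi(g)$ is unitary, $\|\pi(m)\|\leqslant\|m\|_1=1$ will hold automatically, so the real content is the reverse inequality. I would first show, by a standard convexity argument applied to the uniform averages $m_F=\mathbf 1_F/|F|$ over finite subsets $F\subset G$, that $\|\pi(m)\|=1$ for all such $m$ is equivalent to the existence of unit vectors $\xi_n\in L^2(\mathcal R,\nu_r)$ satisfying $\|\pi(g)\xi_n-\xi_n\|\to 0$ for every $g\in G$; one direction uses $\langle\pi(m)\xi_n,\xi_n\rangle\to\sum_g m(g)=1$, and the other extracts approximately invariant vectors from approximate minimizers of $\|\pi(m_F)\xi-\xi\|$.

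For the direction hyperfinite $\Rightarrow$ norm condition, I would write $\mathcal R=\bigcup_n\mathcal R_n$ as an increasing union of finite measurable subequivalence relations and define
\[\xi_n(x,y)=|[x]_n|^{-1/2}\mathbf 1_{\mathcal R_n}(x,y),\]
where $[x]_n$ is the $\mathcal R_n$-class of $x$. A direct computation with $\nu_r$ gives $\|\xi_n\|_2=1$. Cofinality of the $\mathcal R_n$ implies that for any $g\in G$ one has $(g^{-1}x,x)\in\mathcal R_n$ for $\mu$-almost every $x$ once $n$ is large enough, so $[g^{-1}x]_n=[x]_n$ on a set of measure tending to $1$; a routine estimate then yields $\|\pi(g)\xi_n-\xi_n\|\to 0$, giving the almost-invariant vectors and hence the norm equality.

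The reverse direction will be the hard part. Given almost-invariant $\xi_n$, I would invoke Proposition \ref{Prop-grupp-equiv-int} to disintegrate them as measurable sections $x\mapsto\xi_n(x,\cdot)\in l^2(Gx)$; the approximate $\pi$-invariance translates pointwise into a F\o lner-type condition along $\mu$-almost every orbit, so that for any prescribed finite $F\subset G$ the sets $\supp\,\xi_n(x,\cdot)$ become approximately $F$-invariant. The main obstacle will be promoting this analytic almost-invariance into genuine finite subequivalence relations $\mathcal R_n$ exhausting $\mathcal R$ modulo null sets: this demands measurable selection and a Rokhlin-type amalgamation procedure, and is essentially the deep content of the Connes--Feldman--Weiss theorem. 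Once such $\mathcal R_n$ are produced, hyperfiniteness follows by definition.
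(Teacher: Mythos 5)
First, a point of orientation: the paper does not prove Theorem \ref{ThPichot}. It is quoted from Pichot's article, and the authors only remark that the ``only if'' direction follows from Theorem \ref{ThMain}. So there is no in-paper proof to compare yours against, and I assess your argument on its own terms. Your insistence on reading the hypothesis for positive $m$ is necessary and correct, and your reduction of the norm condition to the existence of almost $\pi(G)$-invariant unit vectors (uniform averages plus the standard convexity argument) is fine.

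The genuine gap is precisely in the step you call routine. With $\xi_n(z,x)=|[x]_n|^{-1/2}\mathbbm{1}_{\mathcal R_n}(z,x)$, and recalling that $\nu_r$ integrates the second coordinate against $\mu$ while counting the first, and that $\pi(g)$ translates the \emph{first} coordinate, the function $\pi(g)\xi_n(\cdot,x)$ is the normalized indicator of $g[x]_n$, not of $[g^{-1}x]_n$. Hence
$$\|\pi(g)\xi_n-\xi_n\|^2=\int_X\frac{|g[x]_n\,\triangle\,[x]_n|}{|[x]_n|}\,\dd\mu(x),$$
which is a leafwise F\o lner condition on the classes $[x]_n$ inside the orbital graphs. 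The exhaustion $\mathcal R_n\nearrow\mathcal R$ only controls $\mu(\{x:(g^{-1}x,x)\notin\mathcal R_n\})$, i.e.\ one vertex's worth of boundary per class, and does not make the symmetric differences small relative to $|[x]_n|$. Moreover the gap cannot be repaired in the stated generality: for the boundary action of $F_2$ with its natural quasi-invariant measure --- ergodic, essentially free, and hyperfinite --- Proposition \ref{Prop-grupp-equiv-int} and part $1)$ of Theorem \ref{ThMain} give $\sigma(\pi(m))=\sigma(\rho_x(m))=\sigma(\lambda_{F_2}(m))$, so $\|\pi(m)\|=\sqrt{3}/2<1$ for $m$ uniform on the four standard generators. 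Thus no choice of $\xi_n$ can be almost invariant there, and the ``only if'' implication as literally quoted needs an additional hypothesis (e.g.\ $\mu$ invariant, in which case an exhausting sequence of finite subrelations can be refined to one with asymptotically F\o lner classes and your construction goes through); the reformulation of Pichot's result in terms of measures on $G$ appears to lose this point. Your ``if'' direction is structurally sound but, as you acknowledge, delegates essentially all of its content to Connes--Feldman--Weiss; to count as a proof you would still need to write out the passage from almost-invariant vectors in $L^2(\mathcal R,\nu_r)$ to a leafwise Reiter condition (via $|\xi_n|^2$ and Namioka's trick) establishing amenability of $\mathcal R$ before invoking that theorem.
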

\noindent Observe that the original result of Pichot concerns arbitrary discrete measured equivalence relations on $(X,\mu)$. However, all such equivalence relations are generated by group actions (see \cite{FM1}). Theorem \ref{ThPichot} is a reformulation of Pichot's result in terms of group actions. Theorem \ref{ThMain} implies the "only if" direction of Theorem \ref{ThPichot}.


The following result was the starting point of our investigation:
\begin{Th}[Bartholdi - Grigorchuk]\label{ThBG} Let $G$ be a finitely generated group acting on a regular rooted tree $T$ and $m\in\mathbb C[G]$. Then $\sigma(\rho_x(m))\subset \sigma(\kappa(m))$ for all $x\in\partial T$. If moreover the Schreier graph of the action of $G$ on the orbit $Gx$ of $x\in X$  is amenable, then $\sigma(\rho_x(m))=\sigma(\kappa(m))$.
\end{Th}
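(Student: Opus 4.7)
My plan is to deduce both inclusions from weak containments of representations via Proposition \ref{CoWeakCond}. For the first inclusion I will directly establish $\|\rho_x(m)\| \leq \|\kappa(m)\|$ for every $m \in l^1(G)$, implying $\rho_x \prec \kappa$. For the amenable case I will establish the reverse inequality $\|\kappa(m)\| \leq \|\rho_x(m)\|$ by a F\o lner averaging argument in the Schreier graph $\Gamma_x$, invoking Proposition \ref{CoWeakCond} once more to upgrade norm control to spectral equality.

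For $\sigma(\rho_x(m)) \subset \sigma(\kappa(m))$, the key is that the $G$-action is by rooted tree automorphisms, so for every $g \in G$ and every level-$n$ vertex $v$ one has $g \cdot T_v = T_{gv}$, where $T_v \subset \partial T$ denotes the cylinder of boundary paths through $v$. Given a finitely supported $f \in l^2(Gx)$ with $\supp f = F$ and a finitely supported $m$ with $\supp m = S$, choose $n$ so large that the level-$n$ prefixes $\{y_n : y \in F \cup SF\}$ are pairwise distinct; this is possible because $F \cup SF$ is finite and the boundary points are separated by their prefixes. Letting $d$ be the branching degree, so that $\mu(T_v) = d^{-n}$ for every level-$n$ vertex, set
$$\tilde f = \sum_{y \in F} f(y)\, d^{n/2} \chi_{T_{y_n}} \in L^2(\partial T, \mu).$$
Disjointness of the cylinders gives $\|\tilde f\|_2 = \|f\|_2$, and the identity $g \cdot T_{y_n} = T_{(gy)_n}$ combined with the prefix-separation property yields $\kappa(m)\tilde f = \widetilde{\rho_x(m) f}$. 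Hence $\|\rho_x(m) f\| = \|\kappa(m)\tilde f\| \leq \|\kappa(m)\|\,\|f\|$, and taking the supremum over $f$ gives $\|\rho_x(m)\| \leq \|\kappa(m)\|$.

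For the amenable case, fix a F\o lner sequence $\{F_k\}$ in $\Gamma_x$. By norm density it suffices to prove $\|\kappa(m)\phi\| \leq \|\rho_x(m)\|\,\|\phi\|$ for $\phi$ that is constant on level-$n$ cylinders; such a $\phi$ corresponds, under the $G$-equivariant isometric embedding $\psi \mapsto d^{n/2}\sum_v \psi(v)\chi_{T_v}$ of $l^2(V_n)$ into $L^2(\partial T,\mu)$, to a vector $\psi \in l^2(V_n)$, and $\kappa(m)\phi$ corresponds to $\pi_n(m)\psi$. I would then lift $\psi$ to $l^2(Gx)$ by distributing its mass across the F\o lner set: define
$$\psi^{(k)}(y) = \psi(y_n)\big/\sqrt{\#\{z \in F_k : z_n = y_n\}}\quad\text{for }y\in F_k,$$
and zero elsewhere. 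The F\o lner property ensures that $\rho_x(m)\psi^{(k)}$ agrees with the analogous lift of $\pi_n(m)\psi$ except on a small neighborhood of $\partial_S F_k$, whose relative squared norm is $O(|\partial_S F_k|/|F_k|) \to 0$. Consequently $\|\rho_x(m)\psi^{(k)}\|/\|\psi^{(k)}\| \to \|\pi_n(m)\psi\|/\|\psi\|$, so $\|\pi_n(m)\psi\| \leq \|\rho_x(m)\|\,\|\psi\|$, and letting $n \to \infty$ delivers the bound $\|\kappa(m)\| \leq \|\rho_x(m)\|$.

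The main obstacle I anticipate is the F\o lner step. One has to verify that the F\o lner sets $F_k$ in $\Gamma_x$ project reasonably onto the finite Schreier graphs on $O_n(x) = G\cdot x_n$ for each $n$, so that the boundary-over-bulk ratio really controls the discrepancy between $\rho_x(m)\psi^{(k)}$ and the corresponding lift of $\pi_n(m)\psi$; this is where the self-similarity of the tree action and the uniformity of the Bernoulli measure on $\partial T$ are essential. Once this bookkeeping is in place, combining the two norm inequalities with Proposition \ref{CoWeakCond} yields $\sigma(\rho_x(m)) = \sigma(\kappa(m))$.
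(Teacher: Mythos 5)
The paper itself does not prove Theorem \ref{ThBG}; it refers to \cite{Grig11}, Proposition 10.4 and \cite{BG}, Theorem 3.6, and your overall strategy --- embed $l^2(V_n)$ into $L^2(\partial T,\mu)$ as locally constant functions for one inclusion, and transfer approximate eigenvectors through the covering of Schreier graphs $\Gamma_x\to\Gamma_n$ by F\o lner truncation for the other --- is essentially the argument used there. Your first half is correct and complete: once $n$ separates the prefixes of $F\cup SF$, the cylinder embedding intertwines $\rho_x(m)$ with $\kappa(m)$, giving $\|\rho_x(m)\|\leqslant\|\kappa(m)\|$ for all $m\in\mathbb C[G]$ and hence $\rho_x\prec\kappa$ by Proposition \ref{CoWeakCond}. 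Two hypotheses are used silently and should be stated: $\mu$ is the uniform invariant measure (so no Radon--Nikodym factors appear), and the action is level-transitive, so that $V_n=Gx_n$; without level-transitivity your second half only controls $\pi_n(m)$ on the single orbit $l^2(Gx_n)\subset l^2(V_n)$, which is not enough to bound $\|\kappa(m)\|$.

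The genuine gap is in the F\o lner step, and it is precisely the one you flag without closing. Your lift $\psi^{(k)}(y)=\psi(y_n)/\sqrt{N_k(y_n)}$, with $N_k(v)=\#\{z\in F_k:z_n=v\}$, fails to intertwine with $\rho_x$ \emph{in the bulk}, not just near the boundary: $(\rho_x(g)\psi^{(k)})(z)$ carries the factor $N_k(g^{-1}z_n)^{-1/2}$ while the lift of $\pi_n(g)\psi$ carries $N_k(z_n)^{-1/2}$, and these differ at every $z$, so the discrepancy is not a priori of relative size $O(|\partial_S F_k|/|F_k|)$. What is missing is the fiber-equidistribution of F\o lner sets under the covering $p_n:\Gamma_x\to\Gamma_n$: for a generator $s$ one has $s\bigl(F_k\cap p_n^{-1}(v)\bigr)\subset p_n^{-1}(sv)$, hence $N_k(sv)\geqslant N_k(v)-|sF_k\setminus F_k|$, and by connectedness of $\Gamma_n$ (which forces you to take the F\o lner condition with respect to a finite generating set of $G$, not merely $\supp(m)$) this gives $\max_v N_k(v)-\min_v N_k(v)\leqslant \mathrm{diam}(\Gamma_n)\,|\partial_S F_k|$, so $N_k(v)=\tfrac{|F_k|}{|V_n|}(1+o(1))$ uniformly in $v$. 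With this lemma in place all fibers are eventually nonempty, $\|\psi^{(k)}\|\to\|\psi\|$, your normalization (or the simpler unnormalized lift $\psi(y_n)\mathbbm{1}_{F_k}(y)$) commutes with $\rho_x(m)$ up to an error controlled by $|\partial_S F_k|/|F_k|$, and the rest of your argument, together with Proposition \ref{CoWeakCond}, does yield $\sigma(\rho_x(m))=\sigma(\kappa(m))$.
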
 \noindent For the proof of Theorem \ref{ThBG} we refer the reader to \cite{Grig11}, Proposition 10.4 (see also \cite{BG}, Theorem 3.6).



\section{Proof of Theorem \ref{ThMain}.}
We will split the proof of  Theorem \ref{ThMain} into three parts: Propositions \ref{PropMainHecke2}, \ref{Prop-spec-reg-Koop} and \ref{PropKoopGroup}.
\subsection{Equivalence of quasi-regular and groupoid representations.}\label{SubsecQRGr}
\begin{Prop}\label{PropMainHecke2} For an ergodic measure class preserving action of a countable group $G$ on a standard probability space $(X,\mu)$ one has $\rho_x\sim\pi$ for $\mu-$almost all $x\in X$.
\end{Prop}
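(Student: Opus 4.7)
The plan is to combine Proposition \ref{Prop-grupp-equiv-int}, which identifies $\pi$ with the direct integral $\int_X \rho_x\, d\mu(x)$, with the Dixmier-style spectral criterion of Proposition \ref{CoWeakCond}, and to use ergodicity to convert an essential-supremum equality into a pointwise one. The whole argument reduces to showing that for $\mu$-almost every $x$ and every $m$ in a countable dense subset of $l^1(G)$, the operator norms $\|\rho_x(m)\|$ and $\|\pi(m)\|$ coincide; weak equivalence then follows from the equivalence of conditions $1)$ and $3)$ in Proposition \ref{CoWeakCond}, after extending the equality from the countable dense set to all of $l^1(G)$ by $l^1$-continuity.

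For the containment $\rho_x \prec \pi$, I would invoke the standard norm formula for direct integrals of bounded operator fields, which gives $\|\pi(m)\| = \operatorname{ess\,sup}_{x\in X}\|\rho_x(m)\|$ for every $m \in l^1(G)$. Consequently $\|\rho_x(m)\| \leqslant \|\pi(m)\|$ holds outside a $\mu$-null set $N_m$, and intersecting $N_m$ over $m$ ranging in a countable dense subset $D \subset l^1(G)$ (e.g.\ $\mathbb Q[G]$ with complex rational coefficients) produces a single conull set on which $\rho_x \prec \pi$.

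For the reverse containment $\pi \prec \rho_x$, the key observation is that $\rho_x$ depends only on the $G$-orbit of $x$: if $y = gx$ then $l^2(Gy) = l^2(Gx)$ and the permutational action of $G$ is literally identical, so $\rho_{gx} = \rho_x$ as unitary representations. Hence, for every fixed $m \in l^1(G)$, the function $x \mapsto \|\rho_x(m)\|$ is $G$-invariant, and by ergodicity it is $\mu$-essentially equal to a constant $c_m$. The direct integral formula then gives $\|\pi(m)\| = c_m = \|\rho_x(m)\|$ for $\mu$-a.e.\ $x$. Taking a countable intersection of the resulting conull sets over $m \in D$ yields a single full-measure set $X_0$ on which $\|\rho_x(m)\| = \|\pi(m)\|$ for all $m \in D$, and by $l^1$-norm continuity of $m \mapsto \|\rho_x(m)\|$ and $m \mapsto \|\pi(m)\|$ the equality holds for every $m \in l^1(G)$. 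Proposition \ref{CoWeakCond} then delivers $\rho_x \sim \pi$ for every $x \in X_0$.

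The only genuinely nontrivial step is the use of ergodicity to remove the essential supremum: without the $G$-invariance of $x \mapsto \|\rho_x(m)\|$, the direct integral would only give $\rho_x \prec \pi$, not the reverse. The rest is bookkeeping: ensuring the exceptional null set is chosen uniformly in $m$ (handled by countability of $D$ plus continuity) and appealing to the already-recorded folklore results (Propositions \ref{Prop-grupp-equiv-int} and \ref{CoWeakCond}).
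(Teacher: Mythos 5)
Your argument is correct, but it takes a genuinely different route from the paper's. You get the inequality $\|\rho_x(m)\|\leqslant\|\pi(m)\|$ from the direct-integral norm formula $\|\pi(m)\|=\operatorname{ess\,sup}_x\|\rho_x(m)\|$ (valid once Proposition \ref{Prop-grupp-equiv-int} is in place), and the reverse inequality from the observation that $\rho_x$ depends only on the orbit $Gx$, so that $x\mapsto\|\rho_x(m)\|$ is a $G$-invariant measurable function, hence a.e.\ constant by ergodicity; a countable dense subset of $l^1(G)$ plus $1$-Lipschitz continuity of $m\mapsto\|U(m)\|$ in $\|\cdot\|_1$ fixes the null set uniformly, and Dixmier's criterion (Proposition \ref{CoWeakCond}) converts the norm equality into $\rho_x\sim\pi$. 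The paper instead works combinatorially: ergodicity gives a conull set on which all orbital graphs $\Gamma_{x,g_1,\ldots,g_n}$ are pairwise locally isomorphic (Proposition \ref{PropLocIsom}), near-maximizing finitely supported vectors are transported between isomorphic balls to show that the full spectra $\sigma(\rho_x(m))$ coincide pointwise for a.e.\ $x$ (Proposition \ref{Prop-equiv-M}), and Proposition \ref{Prop-int-M} identifies this common spectrum with $\sigma(\pi(m))$. Your version is shorter and purely functional-analytic; the only point you should make explicit is the measurability of $x\mapsto\|\rho_x(m)\|$, which is part of the measurable-field structure implicit in Proposition \ref{Prop-grupp-equiv-int} (take a supremum over a countable fundamental family of vector fields). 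What the paper's longer detour buys is reusable machinery: the local-isomorphism technique reappears in the proof of Proposition \ref{Prop-spec-reg-Koop} (via the sets $A_{k,x}$) and in its topological analogue, Proposition \ref{PropRegIsom}, needed for Corollary \ref{CoWeak}; it also yields the a.e.\ constancy of the spectra themselves directly, rather than recovering it a posteriori from weak equivalence via condition $2)$ of Proposition \ref{CoWeakCond}.
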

\noindent The proof is based on a few technical statements. We will formulate these statements,
deliver Proposition \ref{PropMainHecke2} from them and then give the proofs of the statements.

For an action of a group $G$ on a space $X$, a subset $S=\{g_1,g_2,\ldots g_n\}\subset G, n\in\mathbb N$ and a point $x\in X$ introduce an \emph{orbital graph} $\Gamma_x=\Gamma_{x,g_1,\ldots,g_n}$ as a marked rooted graph whose vertex set is the set of points of the orbit $Gx$ ($x$ is the root) and such that $y,z\in Gx$ are connected by a directed edge marked by $g_i$ if and only if $z=g_iy$.
 Notice that here we don't assume that the group $G$ is generated by $S$, so the graphs $\Gamma_x$ are not necessary connected. In case if $S$ generates $G$ orbital graph $\Gamma_x$ coincide with marked Schreier graph defined by the triple $(G,\st_G(x),S)$. Fix a numeration of all elements of $G$:
 \begin{equation}\label{EqGNum} G=\{s_1,s_2,s_3,\ldots\}\;\;\text{with}\;\;s_1=e,\;\;\text{the unit element of}\;\;G.
 \end{equation}
 For $k\in \mathbb N$, $x\in X$ and $y\in Gx$ denote by $B_k(y)$ the subgraph of $\Gamma_x$ consisting of vertices $\{z\in Gx:z=s_iy\;\;\text{for some}\;\;i\leqslant k\}$ and all edges connecting them. We denote by $y$ the root of $B_k(y)$. Observe that $B_k(y)$ may be disconnected and that $$\Gamma_x=\bigcup\limits_{k\in\mathbb N}B_k(x).$$ 
 \begin{Def} We will say that two orbital graphs $\Gamma_x$ and $\Gamma_y$ are locally isomorphic if for any $k$ there exist a vertex $u$ of $\Gamma_x$ and a vertex $v$ of $\Gamma_y$ such that $B_k(u)$ is isomorhpic (as marked rooted graph) to $B_k(y)$ and $B_k(v)$ is isomorphic to $B_k(x)$.\end{Def}

The following statement is a straightforward modification of  Proposition 8.11 from \cite{Grig11}.
\begin{Prop}\label{PropLocIsom} Let $G$ act ergodically by measure class preserving transformations on a standard probability space $(X,\mu)$. Then there exists a subset $A\subset X$ of a full measure such that for any $g_1,\ldots g_n\in G,n\in \mathbb N$ and any $x,y\in A$ the marked orbital graphs $\Gamma_{x,g_1,\ldots,g_n}$ and $\Gamma_{y,g_1,\ldots,g_n}$ are locally isomorphic.
\end{Prop}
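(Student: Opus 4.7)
The plan is to exploit ergodicity via a countable-classification argument. Fix a finite tuple $S=(g_1,\ldots,g_n)\in G^n$ and $k\in\mathbb N$. Since $B_k(x)$ has at most $k$ vertices and its edges are labeled by elements of $S$, up to marked rooted graph isomorphism there are only countably many possibilities for $B_k(x)$; call this countable set of isomorphism classes $\mathcal C(S,k)$.

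For each $\mathcal C\in\mathcal C(S,k)$ I would consider
$$E_{S,k,\mathcal C}=\{x\in X:\exists\,u\in Gx\text{ with }B_k(u)\in\mathcal C\}.$$
This set is $G$-invariant, since membership depends only on the orbit $Gx$. It is also measurable: for each fixed $h\in G$ the isomorphism type of $B_k(hx)$ is determined by the truth values of the finitely many conditions $s_i h x=g_j s_{i'} h x$ (equivalently, $h^{-1}s_{i'}^{-1}g_j^{-1}s_i h\in\st_G(x)$), each of which defines a measurable subset of $X$ since the action is by measurable transformations; $E_{S,k,\mathcal C}$ is then a countable union over $h\in G$ of such Boolean combinations. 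By ergodicity, $\mu(E_{S,k,\mathcal C})\in\{0,1\}$. Letting $\mathcal I(S,k)=\{\mathcal C:\mu(E_{S,k,\mathcal C})=1\}$, I would set
$$A=\bigcap_{S,k}\Bigl(\bigcap_{\mathcal C\in\mathcal I(S,k)}E_{S,k,\mathcal C}\Bigr)\cap\bigcap_{S,k}\Bigl(\bigcap_{\mathcal C\notin\mathcal I(S,k)}(X\setminus E_{S,k,\mathcal C})\Bigr).$$
Since $G$ is countable, this is a countable intersection of full-measure sets, hence $\mu(A)=1$.

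To finish, fix $x,y\in A$ and any $S,k$, and let $\mathcal C_y$ be the isomorphism class of $B_k(y)$. The vertex $y$ itself witnesses $y\in E_{S,k,\mathcal C_y}$, so the definition of $A$ forces $\mathcal C_y\in\mathcal I(S,k)$, and then $x\in A\subset E_{S,k,\mathcal C_y}$ supplies a vertex $u\in Gx$ with $B_k(u)\cong B_k(y)$. Exchanging the roles of $x$ and $y$ yields the matching vertex $v\in Gy$ with $B_k(v)\cong B_k(x)$, proving local isomorphism of $\Gamma_x$ and $\Gamma_y$. The only nontrivial point is the measurability of $E_{S,k,\mathcal C}$; once that is verified, the remainder is a routine application of the ergodic $0/1$-law together with the countability of $G$ and of $\mathcal C(S,k)$.
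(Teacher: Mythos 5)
Your proposal is correct and is essentially the paper's own argument: classify the $k$-balls $B_k(\cdot)$ up to marked rooted isomorphism, saturate the corresponding level sets under the $G$-action, invoke ergodicity to make each saturation null or conull, and intersect the countably many resulting full-measure sets over all $S$ and $k$. The only cosmetic differences are that you apply the $0/1$-law directly to the saturations (the paper instead splits isomorphism types into those whose level set has positive versus zero measure) and that you spell out the measurability of the level sets, which the paper leaves implicit.
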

For $m\in\mathbb C[G]$ denote the support of $m$ by $$\supp(m)=\{g\in G:m(g)\neq 0\}.$$
\begin{Prop}\label{Prop-equiv-M} Let $G$ act on a space $X$, $x,y\in X$ and $m\in\mathbb C[G]$. Let $\supp(m)=\{g_1,g_2,\ldots,g_n\}$. If the orbital graphs $\Gamma_{x,g_1,\ldots,g_n}$ and $\Gamma_{y,g_1,\ldots,g_n}$  are locally isomorphic then $\sigma(\rho_x(m))=\sigma(\rho_y(m))$ .
\end{Prop}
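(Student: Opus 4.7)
The plan is to show that the operator $\rho_x(m)$ is determined up to unitary equivalence on the relevant finite-dimensional subspaces by the labeled orbital graph $\Gamma_{x,g_1,\ldots,g_n}$, and then use the local isomorphism hypothesis to transfer approximate eigenvectors between $l^2(Gx)$ and $l^2(Gy)$. The key observation is that in the standard basis,
$$(\rho_x(m)\delta_z)(w)=\sum_{i:\,g_iz=w}m(g_i),$$
so the matrix entries depend only on the marked orbital graph. By symmetry it suffices to prove $\sigma(\rho_x(m))\subset\sigma(\rho_y(m))$.

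The first step would handle the approximate point spectrum. Take $\lambda\in\sigma_{ap}(\rho_x(m))$ and choose unit vectors $\xi_j\in l^2(Gx)$ with $\|\rho_x(m)\xi_j-\lambda\xi_j\|\to 0$; by a standard truncation argument these may be taken to have finite support. Pick $k_j$ large enough that $\supp(\xi_j)$ \emph{together with its one-step neighborhood} $\bigcup_i(g_i\supp(\xi_j)\cup g_i^{-1}\supp(\xi_j))$ lies inside $B_{k_j}(x)$. By the local isomorphism hypothesis, choose $v_j\in Gy$ and a marked-rooted-graph isomorphism $\psi_j:B_{k_j}(v_j)\to B_{k_j}(x)$ with $\psi_j(v_j)=x$, and define $\eta_j(w)=\xi_j(\psi_j(w))$ for $w\in B_{k_j}(v_j)$ and zero elsewhere. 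Then $\|\eta_j\|=\|\xi_j\|=1$, and because $\psi_j$ preserves all $g_i$-labeled edges, for every $w$ in $\psi_j^{-1}(\supp(\xi_j))$ and every generator $g_i$ one has $g_i^{-1}w\in B_{k_j}(v_j)$ with $\psi_j(g_i^{-1}w)=g_i^{-1}\psi_j(w)$; the choice of $k_j$ makes this valid for \emph{all} $w$ in the support of $\rho_y(m)\eta_j-\lambda\eta_j$. A straightforward computation then gives
$$\bigl(\rho_y(m)\eta_j-\lambda\eta_j\bigr)(w)=\bigl(\rho_x(m)\xi_j-\lambda\xi_j\bigr)(\psi_j(w))$$
for every such $w$, and both sides vanish outside the one-step neighborhood of $\supp(\xi_j)$ (respectively its preimage). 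Hence $\|\rho_y(m)\eta_j-\lambda\eta_j\|=\|\rho_x(m)\xi_j-\lambda\xi_j\|\to 0$, so $\lambda\in\sigma_{ap}(\rho_y(m))$.

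The second step would deal with the residual portion of $\sigma(\rho_x(m))$ using the identity $\sigma(T)=\sigma_{ap}(T)\cup\overline{\sigma_{ap}(T^*)}$ valid for any bounded $T$. Since $\rho_x(m)^*=\rho_x(m^*)$ with $m^*(g)=\overline{m(g^{-1})}$, the support of $m^*$ is $\{g_1^{-1},\ldots,g_n^{-1}\}$ and the orbital graph $\Gamma_{x,g_1^{-1},\ldots,g_n^{-1}}$ is obtained from $\Gamma_{x,g_1,\ldots,g_n}$ by reversing every directed edge and relabeling. Consequently any marked isomorphism of the original balls induces one of the reversed balls, so local isomorphism is inherited, and applying Step 1 to $m^*$ and taking complex conjugates yields $\overline{\sigma_{ap}(\rho_x(m)^*)}\subset\overline{\sigma_{ap}(\rho_y(m)^*)}$. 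Combining with Step 1 gives $\sigma(\rho_x(m))\subset\sigma(\rho_y(m))$, completing the proof.

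The main technical obstacle lies in Step 1: one must verify that the labeled graph isomorphism $\psi_j$ intertwines the operators \emph{including at the boundary of its domain}. This is why the one-step neighborhood of $\supp(\xi_j)$ must be chosen to sit inside $B_{k_j}(x)$ and why it is essential that $\psi_j$ preserve $g_i$-labels rather than just unlabeled adjacency; the bookkeeping here is the only part of the argument that is not purely formal.
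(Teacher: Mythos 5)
Your proof is correct, and it takes a genuinely different route from the paper's to the same destination. The common engine of both arguments is the transfer step: a finitely supported vector whose one-step $\supp(m)$-neighborhood sits inside $B_k(x)$ is carried by the marked-ball isomorphism to $l^2(Gy)$ without altering any relevant matrix coefficient of $\rho(m)$; your boundary condition $\bigcup_i\bigl(g_i\supp(\xi_j)\cup g_i^{-1}\supp(\xi_j)\bigr)\subset B_{k_j}(x)$ is exactly the point that needs attention, and you handle it correctly. The two proofs differ in how membership of an arbitrary $\alpha$ in $\sigma(\rho_x(m))$ is reduced to something this transfer can detect. The paper uses the normalization $\alpha\in\sigma(A)\Leftrightarrow 1\in\sigma\bigl(\mathrm{I}-\tfrac{1}{R^2}(A-\alpha\mathrm{I})(A-\alpha\mathrm{I})^{*}\bigr)$ to replace $m$ by a positive element $s\in\mathbb C[G]$ of norm at most $1$, so that spectral membership of $1$ becomes the norm computation $\sup(\rho(s)\eta,\eta)=1$ over finitely supported unit vectors $\eta$; the cost is that $\supp(s)$ lies in $\supp(m)\supp(m)^{-1}\cup\supp(m)\cup\supp(m)^{-1}\cup\{e\}$, which is larger than $\supp(m)$, so one must observe that an isomorphism of $\supp(m)$-marked balls still intertwines $\rho(s)$ on vectors supported deep inside the ball. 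You instead split $\sigma(T)=\sigma_{\mathrm{ap}}(T)\cup\overline{\sigma_{\mathrm{ap}}(T^{*})}$, transfer approximate eigenvectors directly for $m$ itself, and dispose of the compression part by running the same argument for $m^{*}$, using that local isomorphism is inherited by the inverted marking (edge reversal). Your version avoids the support enlargement at the price of the adjoint bookkeeping; the paper's avoids any discussion of non-normality at the price of the positivization lemma. Both arguments are complete.
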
\noindent

The next Proposition is a standard statement about direct integral of Hilbert spaces. It can be derived from Lemma 2, \cite{Chow}. It is straightforward to see that all conditions of Lemma 2, \cite{Chow}, are satisfied in our case.
\begin{Prop}\label{Prop-int-M}
Let $(X,\mu)$ be a standard probability space, $\mathcal H=\int\limits_{X}\mathcal
H_xd\mu(x)$ be a direct integral of separable Hilbert spaces,
$M_x$ be an integrable family of operators and
$M=\int M_xd\mu(x)$. If the spectrum $\sigma(M_x)$ of almost
 all operators $M_x$ coincide and is equal to $\Sigma$ then $\sigma(M)=\Sigma$.
\end{Prop}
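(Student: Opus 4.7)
The plan is to establish $\sigma(M)=\Sigma$ through two inclusions, relying on the standard structure theory of direct integrals: the commutant of the diagonal algebra $L^\infty(X,\mu)$ acting on $\mathcal H$ by pointwise multiplication is precisely the algebra of decomposable operators $\int T_x\,d\mu(x)$ with $\mathrm{ess\,sup}_x\|T_x\|<\infty$, and such an operator is invertible on $\mathcal H$ if and only if $T_x$ is invertible for $\mu$-a.e.\ $x$ with $\mathrm{ess\,sup}_x\|T_x^{-1}\|<\infty$.

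For $\Sigma\subset\sigma(M)$, fix $\lambda\in\Sigma$ and assume for contradiction that $M-\lambda I$ is invertible. Since $M-\lambda I$ lies in the commutant of the diagonal algebra, so does its inverse, hence $(M-\lambda I)^{-1}$ is itself decomposable: write $(M-\lambda I)^{-1}=\int R_x\,d\mu(x)$. The identity $(M-\lambda I)(M-\lambda I)^{-1}=(M-\lambda I)^{-1}(M-\lambda I)=I$ decomposes pointwise, forcing $R_x=(M_x-\lambda I_x)^{-1}$ for $\mu$-a.e.\ $x$ and contradicting $\lambda\in\sigma(M_x)=\Sigma$ on a set of full measure.

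For $\sigma(M)\subset\Sigma$, fix $\lambda\notin\Sigma$ and set $R_x=(M_x-\lambda I_x)^{-1}$ where defined (almost everywhere, by hypothesis). The goal is to show that $R:=\int R_x\,d\mu(x)$ is a well-defined bounded operator inverting $M-\lambda I$, for which two things must be checked: measurability of the field $\{R_x\}$ and the uniform bound $\mathrm{ess\,sup}_x\|R_x\|<\infty$. Measurability follows from approximating $R_x$ by polynomials in $M_x$ via the holomorphic functional calculus with a fixed contour encircling $\Sigma$ and avoiding $\lambda$.

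The main obstacle is the uniform bound. In the normal (in particular self-adjoint) setting relevant to the applications this is immediate from $\|R_x\|=1/\mathrm{dist}(\lambda,\sigma(M_x))=1/\mathrm{dist}(\lambda,\Sigma)$, a quantity independent of $x$. In the general case one has to combine the essential boundedness of $\|M_x\|$ (implicit in $M$ being a bounded decomposable operator) with the constancy of the spectra and the analytic dependence of the resolvent on $\lambda$: one first obtains a uniform Neumann-series bound for $\|(M_x-z)^{-1}\|$ on $|z|>\mathrm{ess\,sup}_x\|M_x\|$ and then propagates this bound along a fixed contour surrounding $\Sigma$ and passing through $\lambda$ using the resolvent identity. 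This is precisely the verification needed to invoke Lemma 2 of \cite{Chow}, whose hypotheses reduce to those of our proposition, completing the proof.
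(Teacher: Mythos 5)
Your inclusion $\Sigma\subset\sigma(M)$ is correct, and for the reverse inclusion you have correctly isolated the two real issues: measurability of $x\mapsto(M_x-\lambda)^{-1}$ and the bound $\mathrm{ess\,sup}_x\|(M_x-\lambda)^{-1}\|<\infty$. (The paper offers no argument beyond citing Lemma 2 of Chow, which is essentially the invertibility criterion for decomposable operators that you quote, so you are supplying the substance.) The genuine gap is your treatment of the uniform bound in the non-normal case. Propagating the Neumann-series estimate along a contour via the resolvent identity cannot reach an arbitrary $\lambda\notin\Sigma$: if $\|(M_x-w)^{-1}\|\leqslant C$, a step of size $r<1/C$ yields at best the bound $C/(1-rC)$, so the admissible steps shrink geometrically and the total distance you can cover starting from a bound $C_0$ is at most $1/C_0$ --- no more than the trivial estimate $\mathrm{dist}(w,\sigma(M_x))\geqslant\|(M_x-w)^{-1}\|^{-1}$ already gives. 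No repair is possible, because the proposition is false for general decomposable families: take $X=\mathbb N$ with atoms, $\mathcal H_k=\mathbb C^k$, and $M_k=N_k$ the $k\times k$ nilpotent Jordan block. Then $\|N_k\|=1$ and $\sigma(N_k)=\{0\}$ for all $k$, yet the $(1,k)$ entry of $(N_k-\lambda)^{-1}$ is $-\lambda^{-k}$, so $\|(N_k-\lambda)^{-1}\|\geqslant|\lambda|^{-k}\to\infty$ for every $0<|\lambda|<1$ and $\sigma\bigl(\bigoplus_k N_k\bigr)$ is the closed unit disk rather than $\{0\}$.

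The statement must therefore carry an extra hypothesis: either that the $M_x$ are normal, in which case $\|(M_x-\lambda)^{-1}\|=\mathrm{dist}(\lambda,\sigma(M_x))^{-1}=\mathrm{dist}(\lambda,\Sigma)^{-1}$ is constant in $x$ and your argument closes at once, or directly that $\mathrm{ess\,sup}_x\|(M_x-\lambda)^{-1}\|<\infty$ for each $\lambda\notin\Sigma$. This restriction costs the paper nothing: in the proof of Proposition \ref{PropMainHecke2} the conclusion $\rho_x\sim\pi$ is drawn through Proposition \ref{CoWeakCond}, whose condition $3)$ involves only positive (hence self-adjoint) elements of $\mathbb C[G]$, and the reduction of an arbitrary $m$ to a positive element is already carried out in the proof of Proposition \ref{Prop-equiv-M}. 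Your measurability step also deserves a caveat: uniform polynomial approximation of $z\mapsto(z-\lambda)^{-1}$ near $\Sigma$ requires $\lambda$ to lie in the unbounded component of $\mathbb C\setminus\Sigma$; in the self-adjoint case it is cleaner to approximate $t\mapsto(t-\lambda)^{-1}$ uniformly on the compact set $\Sigma\subset\mathbb R$ by polynomials $p_n$, so that $p_n(M_x)\to(M_x-\lambda)^{-1}$ in norm uniformly in $x$, giving measurability and the uniform bound simultaneously.
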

Now, let us derive Proposition \ref{PropMainHecke2} from the above statements.
\paragraph*{Proof of Proposition \ref{PropMainHecke2}.}  Let $m\in \mathbb C[G]$ and $\supp(m)=\{g_1,\ldots,g_n\}$. By Proposition \ref{PropLocIsom} for almost all $x$ the orbital graphs $\Gamma_{x,g_1,\ldots,g_n}$ are pairwise locally isomorphic. Proposition \ref{Prop-equiv-M} implies that the spectra $\sigma(\rho_x(m))$ coincide for almost all $x$. Denote this spectrum by $\Sigma$.
From Proposition \ref{Prop-int-M} we get that the spectrum of $$\int\limits_X \rho_x(m)\mathrm{d}\mu(x)$$
 is equal to $\Sigma$. From Proposition \ref{Prop-grupp-equiv-int} we get that
 $\sigma(\pi(m))=\Sigma$. Finally, Corollary \ref{CoWeakCond} implies that $\pi\sim\rho_x$ for almost all $x\in X$.


\paragraph*{Proof of Proposition \ref{PropLocIsom}.} Fix $n$ and $S=\{g_1,g_2,\ldots,g_n\}\subset G$. Let us call a finite rooted directed graph with edges marked by elements of $S$ $r$-admissible if it is isomorphic to $B_r(x)$ (as marked rooted graph) for some point $x\in X$. For an $r$-admissible graph $\Delta$ denote by $X_\Delta(r)$ the set of points $y\in X$ such that $B_r(y)$ is isomorphic to $\Delta$. For any fixed $r$ the sets $X_\Delta(r)$, where $\Delta$ is $r$-admissible, cover $X$, therefore, there exist $\Delta$ for which $X_\Delta(r)$ is of positive measure. We will call such $\Delta$ positively $r$-admissible. Let $P_r$ be the set of positively $r$-admissible graphs and $Z_r$ be the set of $r$-admissible but not positively $r$-admissible graphs. For an $r$-admissible graph $\Delta$ set
$$\tilde X_\Delta(r)=\bigcup\limits_{g\in G}g(X_\Delta(r)).$$ Clearly, for $\Delta\in P_r$ the set  $\tilde X_\Delta(r)$ is an invariant set of positive measure. Since the action is ergodic $\mu(\tilde X_\Delta(r))=1$. For $\Delta\in Z_r$ one has $\mu( X_\Delta(r))=0$. Denote $$X^S_*=\bigcap\limits_{r\geqslant 1}\bigcap\limits_{\Delta\in P_r}\tilde X_\Delta(r)\setminus \Big(\bigcup\limits_{r\geqslant 1}\bigcup\limits_{\Delta\in Z_r}X_\Delta(r)\Big).$$
Then $\mu(X_*^S)=1$.

 Further, let $x,y\in X^S_*$, $r\in\mathbb N$ and $\Delta=B_r(x)$. Definition of $X^S_*$ implies that $\Delta\in P_r$. Therefore, $y\in \tilde X_\Delta(r)$. Thus, $y\in g(X_\Delta(r))$ for some $g\in G$. This means that the marked rooted graphs $B_r(g^{-1}y),\Delta$ and $B_r(x)$ are pairwise isomorphic. We obtain that for any $x,y\in X^S_*$ the orbital graphs $\Gamma_{x,g_1,\ldots,g_n}$ and $\Gamma_{y,g_1,\ldots,g_n}$ are locally isomorphic.

Finally, denoting by $A$ the intersection of all sets of the form $X^S_*$ where $S$ is a finite subset of $G$ we obtain the desired.


\paragraph*{Proof of Proposition \ref{Prop-equiv-M}.} Let $G,X,x,y,m$ be as in the formulation of the Proposition \ref{Prop-equiv-M} and the orbital graphs $\Gamma_x=\Gamma_{x,g_1,\ldots,g_n}$ and $\Gamma_y=\Gamma_{y,g_1,\ldots,g_n}$ be locally isomorphic.
Set $$R=2\sum\limits_{g\in\supp(m)} |m(g)|.$$
Fix a point $\alpha$ from $\sigma(\rho_x(m))$ and let us show that $\alpha\in \sigma(\rho_y(m))$. Clearly,
$|\alpha|\leqslant\tfrac{1}{2}R$.
The proof of the following Lemma is straightforward and we omit it here.
\begin{Lm}\label{LmEquivNorm} Let $A$ be any bounded nonzero linear operator on a Hilbert space and $R\geqslant 2\|A\|$. Then
$$\alpha\in \sigma(A)\;\;\Leftrightarrow\;\;1\in \sigma(\mathrm{I}-
\tfrac{1}{R^2}(A-\alpha\mathrm{I})(A-\alpha\mathrm{I})^{*}),$$
where $\mathrm{I}$ is the identity operator.
\end{Lm}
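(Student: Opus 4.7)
The plan is to convert the spectral condition $\alpha\in\sigma(A)$ into a spectral condition for a positive self-adjoint contraction, which is far better suited to approximate-eigenvector arguments and to transferring data between the locally isomorphic orbital graphs of the next step in the proof of Proposition \ref{Prop-equiv-M}. Write $B:=A-\alpha\mathrm{I}$ and $T:=\mathrm{I}-\tfrac{1}{R^{2}}BB^{*}$.

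The first move is a size bound. Since $\alpha\in\sigma(A)$ forces $|\alpha|\leqslant\|A\|$ by the spectral radius estimate, the hypothesis $R\geqslant 2\|A\|$ yields $\|B\|\leqslant 2\|A\|\leqslant R$. Thus $BB^{*}$ is positive self-adjoint with $\sigma(BB^{*})\subseteq[0,R^{2}]$, and $T$ is self-adjoint with $\sigma(T)\subseteq[0,1]$. Applying the spectral mapping theorem to the polynomial $p(t)=1-t/R^{2}$ and to the self-adjoint operator $BB^{*}$ gives $\sigma(T)=p(\sigma(BB^{*}))$, whence
\[1\in\sigma(T)\;\Longleftrightarrow\; 0\in\sigma(BB^{*})\;\Longleftrightarrow\;BB^{*}\text{ is not invertible}.\]

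What remains is to equate non-invertibility of $BB^{*}$ with non-invertibility of $B$ itself, i.e.\ with $\alpha\in\sigma(A)$. One direction (``$BB^{*}$ not invertible $\Rightarrow$ $B$ not invertible'') is automatic, since an invertible $B$ forces both $BB^{*}$ and $B^{*}B$ to be invertible. The converse is where I expect the real work to lie: on a general Hilbert space a bounded operator can be surjective yet fail to be injective (witness the unilateral left shift), so $B$ not invertible need not entail $BB^{*}$ not invertible. In the intended application, however, $A$ is a Hecke-type operator $\rho_{x}(m)$ lying in the finite groupoid von Neumann algebra of the action, where left-, right- and two-sided invertibility coincide; there $B$ not invertible does force $BB^{*}$ not invertible, closing the equivalence and yielding the lemma.
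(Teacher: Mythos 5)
Your reduction via the spectral mapping theorem is correct and is surely the intended first half of the argument: $1\in\sigma(T)$ if and only if $BB^{*}$ is not invertible, and the bound $\|B\|\leqslant R$ is only needed to make $T$ a positive contraction, which is what the rest of the proof of Proposition \ref{Prop-equiv-M} actually consumes. You are also right that the remaining step is exactly the equivalence ``$B$ invertible $\Leftrightarrow$ $BB^{*}$ invertible'', that only one implication is automatic, and that the other fails for a general bounded operator: taking $A$ to be the backward shift on $l^2(\mathbb N)$ (so $AA^{*}=\mathrm{I}$, $\|A\|=1$), $\alpha=0$ and $R=2$, one has $0\in\sigma(A)$ while $\mathrm{I}-\tfrac14 AA^{*}=\tfrac34\mathrm{I}$. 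So the Lemma as literally stated (``any bounded nonzero linear operator'') is false, and the implication that fails is the one the surrounding proof uses first ($\alpha\in\sigma(\rho_x(m))\Rightarrow 1\in\sigma(\rho_x(s))$), so it cannot be discarded. The paper offers no proof (``straightforward, omitted''), and your analysis shows the statement genuinely needs amendment.

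Your proposed repair, however, does not close the gap. The operator $\rho_x(m)$ acts on $l^2(Gx)$, not on $L^2(\mathcal R,\nu_r)$, so it does not lie in the groupoid von Neumann algebra; the algebra it does generate is $\rho_x(G)''$, which equals $B(l^2(Gx))$ whenever $\rho_x$ is irreducible (the case of main interest here) and is certainly not finite. Even the groupoid algebra is finite only when $\mu$ is equivalent to an invariant measure, whereas part $1)$ of Theorem \ref{ThMain} allows arbitrary quasi-invariant $\mu$; and placing $C_{\rho_x}$ inside a tracial algebra is essentially what Proposition \ref{PropMainHecke2} is trying to prove, so invoking it risks circularity. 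Moreover the Lemma is applied in the text to an arbitrary unitary representation $\omega$, so a fix tied to one representation would not suffice. The clean repair is to use both positive elements: $B$ is invertible if and only if both $BB^{*}$ and $B^{*}B$ are, hence $\alpha\in\sigma(A)$ if and only if $1\in\sigma(\mathrm{I}-\tfrac{1}{R^2}BB^{*})$ or $1\in\sigma(\mathrm{I}-\tfrac{1}{R^2}B^{*}B)$. Both operators are of the form $\omega(s)$ with $s\in\mathbb C[G]$ and are positive contractions, so the transfer argument of Proposition \ref{Prop-equiv-M} applies verbatim to whichever of the two has $1$ in its spectrum.
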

Using Lemma \ref{LmEquivNorm} we obtain that for any unitary representation $\omega$ on $G$ one has:
$$\alpha\in \sigma(\omega(m))\;\;\Leftrightarrow\;\;1\in \sigma(\mathrm{I}-
\tfrac{1}{R^2}(\omega(m)-\alpha\mathrm{I})(\omega(m)-\alpha\mathrm{I})^{*}).$$ The
operator $\mathrm{I}-
\tfrac{1}{R^2}(\omega(m)-\alpha\mathrm{I})(\omega(m)-\alpha\mathrm{I})^{*}$
is of the form $\omega(s)$ for some $s\in \mathbb C[G]$,
 positive and of norm $\leqslant 1$. It follows that without loss of generality we may assume
that $\alpha=1$ and operators $\rho_x(m)$ and $\rho_y(m)$ are positive of norm $\leqslant 1$ (in fact,
$\|\rho_x(m)\|=1$, since we assume that $\alpha=1\in\sigma(\rho_x(m))$).

Further, consider orbital graphs $\Gamma_x$ and $\Gamma_y$. Let $\epsilon>0$. Since
$$\sup\limits_{\xi:\|\xi\|=1} (\rho_x(m)\xi,\xi)=1$$ we can find $l\in\mathbb N$ and a vector $\eta\in l^2(Gx)$
 supported on $B_l(x)$ such that $(\rho_x(m)\eta,\eta)>1-\epsilon$. Let
 $v\in\Gamma_y$ be such that $B_l(v)\subset \Gamma_y$ is isomorphic (as a rooted labeled graph) to
  $B_l(x)$. Let $\eta'\in l^2(B_l(v))\subset l^2(\Gamma_y)$ be a copy of $\eta$ via this isomorphism.
   Then one has:
  $$(\rho_x(m)\eta',\eta')=(\rho_y(m)\eta,\eta)>1-\epsilon.$$ It follows that $\|\rho_y(m)\|=1$
  and $1\in\sigma(\rho_y(m))$. This finishes the proof of Proposition \ref{Prop-equiv-M}.

\subsection{Weak containment of quasi-regular representations in the Koopman representation.}
\begin{Prop}\label{Prop-spec-reg-Koop}
$1)$ Let a countable group $G$ act on a standard probability space $(X,\mu)$, where $\mu$ is a quasi-invariant measure. Let $\kappa$ be the corresponding Koopman
representation in $L^2(X,\mu)$ and $\rho_x$ denotes the quasi-regular representation of $G$ in $l^2(Gx)$, $x\in X$. Then for almost all $x\in X$ one has $\rho_x\prec \kappa.$

\noindent
$2)$ If moreover $\mu$ is $G$-invariant and non-atomic then for almost all $x\in X$ one has $\rho_x\prec\kappa_0$, where $\kappa_0$ is the restriction of $\kappa$ onto the orthogonal complement to constant functions.
\end{Prop}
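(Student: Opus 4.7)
The plan is to apply Dixmier's criterion via Proposition \ref{CoWeakCond}: $\rho_x \prec \kappa$ is equivalent to $\|\rho_x(m)\| \leq \|\kappa(m)\|$ for every positive $m \in \mathbb{C}[G]$. Since $\mathbb{C}[G]$ has a countable subset dense in the $l^1$-norm and both operator norms are $l^1$-continuous (each being bounded by $\|m\|_1$), it suffices to verify the inequality for each $m$ in a fixed countable family simultaneously, for $\mu$-almost every $x$. A preliminary reduction via ergodic decomposition lets me assume the action is ergodic. Replacing $m$ by $m^{*}m$ I may assume $m \geq 0$, so $\|\rho_x(m)\| = \sup_{\|\eta\|=1}(\rho_x(m)\eta,\eta)$; for $\epsilon > 0$ I pick a finitely-supported $\eta \in l^2(Gx)$ with $\|\eta\|=1$ and $(\rho_x(m)\eta,\eta) > \|\rho_x(m)\| - \epsilon$. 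The task reduces to constructing $f \in L^2(X,\mu)$ with $\|f\| \approx 1$ and $(\kappa(m)f,f) \approx (\rho_x(m)\eta,\eta)$.

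I would construct $f$ by a Rokhlin-type transfer. Let $S \subset G$ be a finite set large enough that the orbital ball $B_S(x)$ in $\Gamma_{x,S}$ records all the combinatorial identifications needed to compute $(\rho_x(m)\eta,\eta)$, and let $A = \{z \in X : B_S(z) \cong B_S(x) \text{ as marked rooted graphs}\}$. By Proposition \ref{PropLocIsom} combined with ergodicity, $\mu(A) > 0$ for almost every $x$. The crucial step is to produce, for any $\delta > 0$, a measurable $A_0 \subset A$ with $0 < \mu(A_0) < \delta$ such that the translates $\{s A_0 : s \in S\}$ are pairwise equal or disjoint, matching exactly the coincidences among the vertices $\{sx : s \in S\}$ of $B_S(x)$. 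I would achieve this iteratively: let $H$ be the finite set of elements $h = s^{-1}gt \in G$ (with $g \in \supp(m) \cup \{e\}$, $s,t \in S$) for which $hx \neq x$. The combinatorial condition on $A$ forces each $h \in H$ to act without fixed points on $A$. For each $h \in H$ in turn, extract a positive-measure further subset of the current $A_0$ satisfying $A_0 \cap h A_0 = \emptyset$, using the Rokhlin lemma for infinite-order $h$ and a Borel transversal of the $\langle h\rangle$-action for finite-order $h$.

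Given such $A_0$, define the test vector $f = \mu(A_0)^{-1/2}\sum_i \eta(s_i x)\,\chi_{s_i A_0}$. In the invariant case (part $2)$), $G$-invariance of $\mu$ gives $\|f\| = 1$, and the disjointness together with the combinatorial matching yield $(\kappa(g)f,f) = (\rho_x(g)\eta,\eta)$ for every $g \in \supp(m)$, so $(\kappa(m)f,f) = (\rho_x(m)\eta,\eta)$. The projection of $f$ onto the constants has norm at most $\mu(A_0)^{1/2}\|\eta\|_1$, which vanishes with $\delta$; thus the orthogonal component $f_0 \in L^2_0(X,\mu)$ satisfies $(\kappa_0(m)f_0,f_0) \to (\rho_x(m)\eta,\eta)$, giving $\|\kappa_0(m)\| \geq \|\rho_x(m)\| - \epsilon$. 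In the quasi-invariant case (part $1)$), the coefficients of $f$ must be twisted by the square roots of the Radon--Nikodym cocycles $d\mu(s_i^{-1}\,\cdot)/d\mu(\cdot)$; by Lusin's theorem, on a slightly smaller $A_0$ these cocycles are essentially constant in their $A_0$-variable, contributing only an $o(1)$ error in the matrix coefficient as $\delta \to 0$, and one still obtains $\|\kappa(m)\| \geq \|\rho_x(m)\|$.

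The main obstacle is the Rokhlin-style selection of $A_0$ with simultaneous disjointness over all of $H$. A single $h$ is handled by classical means, but the iterative shrinking across the finitely many $h \in H$ must preserve positive measure at every step, which depends on the fixed-point-free action of each remaining $h$ persisting on successive subsets — a consequence of the combinatorial characterization of $A$ inherited from Proposition \ref{PropLocIsom}. The quasi-invariant case adds the technical burden of controlling the Radon--Nikodym cocycle within the transferred vector via Lusin's theorem.
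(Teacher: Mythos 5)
Your proposal follows essentially the same route as the paper's proof: reduce via Dixmier's criterion to comparing $\|\rho_x(m)\|$ and $\|\kappa(m)\|$ for positive $m$, locate a positive-measure set of points whose combinatorial $S$-ball matches that of $x$ (the paper's Lemma \ref{LmAkx}), disjointify the relevant translates of a positive-measure subset, and transfer the almost-maximizing finitely supported vector $\eta$ to a combination of normalized indicator functions. Three of your choices differ in detail, all workably. For part $2)$ you shrink $\mu(A_0)$ so that the constant component of $f$ is $O(\mu(A_0)^{1/2})$, whereas the paper perturbs $\eta$ to have zero coefficient sum; your version uses non-atomicity, which is exactly the hypothesis of part $2)$, so this is fine and arguably cleaner. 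For the Radon--Nikodym cocycle you invoke Lusin, whereas the paper pigeonholes the values of the finitely many derivatives into multiplicative intervals $[(1+\epsilon)^s,(1+\epsilon)^{s+1})$ and passes to a positive-measure level set $B_f$; the pigeonhole needs no topology on $X$ and avoids the extra density-point step your route requires. The one imprecision is your dichotomy in the disjointification step: whether Rokhlin's lemma or a transversal is appropriate is governed by the period of the action of $h=s^{-1}gt$ on $A$, not by the order of $h$ in $G$ (an infinite-order element can act periodically on a positive-measure set, and conversely its action need not be aperiodic where you want to apply Rokhlin). The statement you actually need is exactly the paper's Lemma \ref{Lm-A-refining} --- any measure-class-preserving $T$ without fixed points on a positive-measure set $A$ admits a positive-measure $B\subset A$ with $\mu(B\cap TB)=0$ --- which is proved uniformly, with no case split, by a separating-basis argument going back to Rokhlin; iterating it over the finitely many $h\in H$ works because, as you correctly note, fixed-point-freeness on $A$ is inherited by every subset of $A$.
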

One of the ingredients of the proof is the following statement:
\begin{Lm}\label{Lm-A-refining} Let $T$ be a measure class preserving transformation of a standard probability space $(X,\mu)$ such that $Tx\neq x$ for almost all $x\in A$, where
$\mu(A)>0$. Then there exists $B\subset A,\mu(B)>0$ such that $\mu(B\cap TB)=0$.
\end{Lm}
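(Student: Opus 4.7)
The plan is to exploit the fact that on a standard probability space there is a countable family of measurable sets separating points. Since $Tx\neq x$ almost everywhere on $A$, for each such $x$ some separating set will contain $x$ but not $Tx$; grouping points in $A$ according to which set separates them from their image expresses $A$ as a countable union, one of whose pieces must have positive measure and automatically be almost disjoint from its $T$-image.

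Concretely, I would first discard a null set and assume $Tx\neq x$ for every $x\in A$. Then I would fix a countable family $\{E_n\}_{n\geqslant 1}$ of measurable subsets of $X$ separating points of $X$ (such a family exists since $(X,\mu)$ is standard, so $X$ may be realized modulo null sets as a Polish space, and a countable base of its topology does the job). For each $n$ set
$$B_n = A\cap E_n\cap T^{-1}(X\setminus E_n).$$
Each $B_n$ is measurable, and by the separation property every $x\in A$ lies in some $B_n$: pick $E_n$ with $x\in E_n$, $Tx\notin E_n$. Hence $A=\bigcup_n B_n$, and since $\mu(A)>0$ there is an $n$ with $\mu(B_n)>0$. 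Choose $B=B_n$ for such an $n$.

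To verify the required disjointness, suppose $y\in B\cap TB$. On the one hand $y\in B\subset E_n$; on the other hand $y=Tx$ for some $x\in B$, and the definition of $B$ forces $Tx\in X\setminus E_n$, contradicting $y\in E_n$. Therefore $B\cap TB=\varnothing$, and in particular $\mu(B\cap TB)=0$, which is the conclusion.

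I do not foresee any real obstacle; the mild technical point is only the existence of a countable point-separating family, which is a routine consequence of $(X,\mu)$ being a standard probability space. Measure-class preservation of $T$ enters only through the measurability of $T^{-1}(X\setminus E_n)$ and the freedom to ignore null sets at the start.
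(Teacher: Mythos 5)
Your proof is correct and rests on the same idea as the paper's (taken from Rokhlin): use a countable point-separating family of measurable sets to detect, for almost every $x\in A$, a set containing $x$ but not $Tx$, and extract a positive-measure piece from the resulting countable cover of $A$. Your version is in fact slightly more streamlined, since defining $B_n=A\cap E_n\cap T^{-1}(X\setminus E_n)$ gives $B\cap TB=\varnothing$ outright and avoids the paper's intermediate step of first producing $C$ with $\mu(TC\,\Delta\, C)>0$ and then a two-case analysis.
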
 \noindent For the case of a measure preserving automorphism Lemma \ref{Lm-A-refining} follows from the proposition of $\S 1$ of \cite{Rokh49}. In fact, the same proof works in the case of a measure class preserving transformation. For the reader's convenience we provide here the arguments taken from \cite{Rokh49}, $\S 1$.
\begin{proof} Let us show first that there exists a measurable subset $C\subset A$ such that $\mu(T(C)\Delta C)\neq 0$. Fix a basis $\{A_i\}$ in $A$. Set
$$B_i=(A\setminus A_i)\cap T(A_i)\cup A_i\cap (A\setminus T(A_i)).$$ By definition of basis for almost all $x,y\in A$ such that $x\neq y$ there exists $A_i$ such that either $x\in A_i,y\in A\setminus A_i$ or $y\in A_i,x\in A\setminus A_i$. It follows that for almost all $x\in A$ there exists $i$ such that $x\in B_i$. Therefore, $\mu(\cup B_i)=\mu(A)>0$ and $\mu(B_i)>0$ for some i. Set $C=B_i$.

Now, if $\mu(C\setminus TC)\neq 0$ we set $B=C\setminus TC$. If $\mu(TC\setminus C)\neq 0$ we set $B=T^{-1}(TC\setminus C)$.
\end{proof}
\begin{Lm}\label{LmAkx}
Let $G$ act on $(X,\mu)$, where $\mu$ is a quasi-invariant probability measure. Let  $g_1,g_2,\ldots,g_n\in G$. For $x\in X$ and $k\in\mathbb N$ set
$$A_{k,x}:=\{y\in X:B_{k}(y)\;\;\text{is isomorphic to}\;\;B_{k}(x)\}.$$
 Then for almost all $x\in X$ one has $$\mu(A_{k,x})>0 \;\;\text{for all}\;\;k\in\mathbb N.$$\end{Lm}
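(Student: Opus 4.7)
The plan is a pigeonhole argument: for each fixed $k$ there are only finitely many isomorphism classes of possible $k$-balls, and these classes partition $X$ into measurable cells; removing the union of the null cells over all $k$ yields the desired conull set.

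First, I would fix $k\in\mathbb N$. By construction $B_k(y)$ is a marked rooted graph whose vertex set is contained in $\{s_1 y, s_2 y,\ldots,s_k y\}$, hence of cardinality at most $k$, and whose edges are labelled by elements of the finite set $\{g_1,\ldots,g_n\}$. The number of isomorphism classes of such marked rooted graphs on at most $k$ vertices is therefore finite; denote this finite family by $\mathcal D_k$. For each $\Delta\in\mathcal D_k$ put $X_\Delta(k)=\{y\in X:B_k(y)\cong\Delta\}$, so that $X$ decomposes as the finite disjoint union $X=\bigsqcup_{\Delta\in\mathcal D_k}X_\Delta(k)$.

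Next I would verify that each $X_\Delta(k)$ is measurable. The isomorphism class of $B_k(y)$ is completely determined by the pattern of equalities $s_i y=s_j y$ for $i,j\le k$ together with the pattern of equalities $s_i y=g_l s_j y$ for $i,j\le k$, $l\le n$. Each such condition has the form $hy=y$ for a fixed $h\in G$ (namely $h=s_j^{-1}s_i$, respectively $h=s_j^{-1}g_l^{-1}s_i$), and $\fix(h)=\{y:hy=y\}$ is measurable since $G$ acts by measurable transformations. As $X_\Delta(k)$ is a finite Boolean combination of such sets, it is measurable.

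Because $\mathcal D_k$ is finite and $\mu$ is a probability measure, the set $N_k=\bigcup\{X_\Delta(k):\Delta\in\mathcal D_k,\ \mu(X_\Delta(k))=0\}$ is a finite union of null sets and hence null. For any $y\in X\setminus N_k$, the isomorphism class of $B_k(y)$ belongs to the complement of this null collection, and therefore $\mu(A_{k,y})=\mu(X_{B_k(y)}(k))>0$. Taking $N=\bigcup_{k\ge 1}N_k$ yields a $\mu$-null set outside of which $\mu(A_{k,x})>0$ for every $k\in\mathbb N$, which is exactly the statement of the lemma. The only mild technical point is the measurability check above; everything else is a direct pigeonhole over a finite partition.
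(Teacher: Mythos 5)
Your argument is correct and is essentially the paper's own proof: for each $k$ the finitely many isomorphism classes of $k$-balls partition $X$ into measurable cells, the null cells are discarded, and one intersects the resulting conull sets over $k$. The measurability verification via fixed-point sets is a welcome detail that the paper leaves implicit.
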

\begin{proof} For every $k$ there are only finitely many distinct marked graphs appearing in the set $\{B_{2(k+1)}(x):x\in X\}$. Let $\mathcal B_k$ be the set of marked graphs $B$ such that $$\mu(\{x\in X:B_k(x)=B\})>0.$$  Consider $$M_k=\{x\in X:B_k(x)\in \mathcal B_k\}.$$ By construction, $\mu(M_k)=1$ and for every $x\in M_k$ one has: $\mu(A_{k,x})>0$. Let $$M=\bigcap\limits_{k\in\mathbb N}M_k.$$ Then $\mu(M)=1$ and for every $x\in M$ and every $k\in \mathbb N$ one has: $\mu(A_{k,x})>0$, which finishes the proof.
\end{proof}
\begin{proof}[{\bf Proof of Proposition \ref{Prop-spec-reg-Koop}}]
By Corollary \ref{CoWeakCond} it is sufficient to show that for all positive $m\in\mathbb C[G]$  for almost all $x\in X$ one has $\|\rho_x(m)\|\leqslant \|\kappa(m)\|$.
 Let $\supp(m)=\{g_1,\ldots,g_n\}$ and $A_{k,x}$ be the sets defined in Lemma \ref{LmAkx}.
 Till the end of the proof of this proposition fix $x$ such that $$\mu(A_{k,x})>0\;\;\text{for all}\;\;k\in\mathbb N.$$ Let $m\in\mathbb C[G]$ be a positive element. Without loss of generality we can assume that $\|\rho_x(m)\|=1$.
 Let $\epsilon>0$. Since
$$\sup\{(\rho_x(m)\xi,\xi):\xi\in l^2(\Gamma_x),\|\xi\|=1\}=1$$ we can find a unit vector $\eta\in l^2(\Gamma_x)$ of finite support such that $(\rho_x(m)\eta,\eta)>1-\epsilon$.

Further, fix $k$ such that $\supp(\eta)\subset B_k(x)$. Chose $K\in\mathbb N$ such that $gB_k(x)\subset B_K(x)$ for all $g\in\supp(m)$. 
Observe that for every $y\in A_{K,x}$, any $i,j\leqslant k$ and any $g,h\in\supp(m)$ one has: $$gs_iy=hs_jy\;\;\Leftrightarrow\;\;
gs_ix=hs_jx.$$ 
Using Lemma \ref{Lm-A-refining} successively for all elements of the form $s_j^{-1}h^{-1}gs_i,$ where $i,j\leqslant k$ and $g,h\in\supp(m)$ such that
$s_j^{-1}h^{-1}gs_ix\neq x$ we can find $B\subset A_{k,x}$ such that $\mu(B)>0$ and
$$\mu(gs_iB\cap hs_jB)=0\;\;\text{for all such}\;\;g,h,s_i,s_j.$$

Further, divide the set of positive numbers $\mathbb R_+$ into subintervals
$$I_s=[(1+\epsilon)^s,(1+\epsilon)^{s+1}),s\in\mathbb Z$$ so that for every $s\in\mathbb Z$ one has $ab^{-1}\in (1-\epsilon,1+\epsilon)$ for $a,b\in I_s$. For every function $f:\{1,\ldots,k\}\times\supp(m)\to\mathbb Z$ introduce the set
$$B_f=\{t\in B:\sqrt{\tfrac{\dd \mu(gs_it)}{\dd \mu(s_it)}}\in I_{f(i,g)}\;\;\text{for every}\;\;g\in \supp(m),1\leqslant i\leqslant k\}.$$ Since union of the sets $B_f$ over all function $f$ is the set $B$ of positive measure one has $\mu(B_f)>0$ for some $f$. Fix such $f$. Then for every $g\in \supp(m),1\leqslant i\leqslant k$ we have:
$$\sqrt{\tfrac{\mu(gs_iB_f)}{\mu(s_iB_f)}}\in I_{f(i,g)}\;\;\text{and}\;\;\Big|1-\sqrt{\tfrac{\dd \mu(t)}{\dd \mu(gt)}}\sqrt{\tfrac{\mu(gs_iB_f)}{\mu(s_iB_f)}}\Big|<\epsilon\;\;\text{for all}\;\;t\in s_iB_f.$$

Finally, for $1\leqslant i\leqslant k,g\in\supp(m)$ and $y=s_ix$ consider the function  $e_y=\frac{1}{\sqrt{\mu(gs_iB_f)}}\mathbbm{1}_{gs_iB_f}$. Observe that
\begin{equation}\label{Eqey}\|e_y-\kappa(s_i)e_x\|^2=\int\limits_{s_iB_f}\Big(\tfrac{1}{\sqrt{\mu(s_iB_f)}}-
\tfrac{1}{\sqrt{\mu(B_f)}}\sqrt{\tfrac{\dd \mu(s_i^{-1}t)}{\dd \mu(t)}}\Big)^2\dd\mu(t)<\epsilon.\end{equation}
Consider the spaces \begin{align*}\mathcal H_x=\Sp\{\delta_y:y=gs_ix,i\leqslant k,g\in\supp(m) \}\subset l^2(Gx),\\
\mathcal L_x=\Sp\{e_y:y=gs_ix,i\leqslant k,g\in\supp(m)\}\subset L^2(X,\mu).\end{align*} The map $\phi:\delta_y\to e_y$ induces an isometry between these spaces. Moreover, by inequality \eqref{Eqey} for every $h\in \supp(m)$ and every $y=s_ix\in B_k(x)$, where $i\leqslant k$, we have
\begin{align*}\|\phi(\rho_x(h)\delta_y)-\kappa(h)e_y\|^2=\|e_{hy}-\kappa(h)e_y\|^2\\
\leqslant (\|e_{hs_ix}-\kappa(hs_i)e_x\|+\|\kappa(h)(e_{s_ix}-\kappa(s_i)e_x)\|)^2\leqslant 4\epsilon.\end{align*} This implies that $$\|\phi(\rho_x(h)\eta)-\kappa(h)\phi(\eta)\|\leqslant 2\sqrt\epsilon$$ for every $h\in\supp(m)$ and thus
\begin{equation}\label{EqKaRhoIneq}\|\phi(\rho_x(m)\eta)-\kappa(m)\phi(\eta)\|\leqslant 2\sqrt\epsilon\|m\|_1,\end{equation}
where $\|m\|_1=\sum\limits_{h\in\supp(m)}|m(h)|$. Since $\|\rho_x(m)\eta\|\geqslant 1-\epsilon$ for arbitrary $\epsilon>0$ we obtain that $\|\kappa(m)\|=1$ and $1\in \sigma(\kappa(m))$. This finishes the proof of part $1)$ of Proposition \ref{Prop-spec-reg-Koop}.

Now let $\mu$ be $G$-invariant and non-atomic. From ergodicity it follows that the orbit $Gy$ is infinite for almost all $y\in X$. Without loss of generality we can assume that $Gx$ is infinite. Fix $\epsilon>0$ and  a unit vector $\eta\in l^2(\Gamma_x)$ of finite support such that $(\rho_x(m)\eta,\eta)>1-\epsilon$. Assume that $$\alpha=\sum_{y\in\supp(\eta)}\eta(y)\neq 0.$$ Then choose arbitrarily a sequence of distinct elements $y_i$ from $Gx\setminus \supp(\eta)$ and for $n\in\mathbb N$ introduce
$$\eta_n=\eta-\tfrac{\alpha}{n}\sum\limits_{i=1}^n\delta_{y_i}\in l^2(\Gamma_x),
\;\;m_n=m+\tfrac{1}{n^2}\sum\limits_{i=1}^n\delta_{h_i}\in \mathbb C[G],$$ where $h_i\in G$ are such that $h_ix=y_i$. Clearly,
$$\sum_{y\in\supp(\eta_n)}\eta_n(y)=0,\;\;\text{and}\;\;\lim\limits_{n\to\infty}\eta_n=\eta\;\;\text{in}\;\;l^2\text{-norm}.$$ Moreover,
$\lim\limits_{n\to\infty}\rho_x(m_n)=\rho_x(m)\;\;\text{and}\;\;\lim\limits_{n\to\infty}\kappa(m_n)=\kappa(m)$ where the limits are in the strong operator topology. Therefore, without loss of generality we may assume that
$$\sum_{y\in\supp(\eta)}\eta(y)=0.$$ Then by construction $\phi(\eta)\in L^2(X,\mu)$ is orthogonal to constant functions, and thus the representation $\kappa$ in the inequality \eqref{EqKaRhoIneq} can be replaced by $\kappa_0$. When $\epsilon\to 0$ we obtain that $\|\kappa_0(m)\|=1$. This finishes the proof of part $2)$ of Proposition \ref{Prop-spec-reg-Koop} and hence finishes the proof of parts $1)$ and $2)$ of Theorem \ref{ThMain}.
\end{proof}
\begin{Rem} The condition of non-atomicity of measure $\mu$ in the second part of Proposition \ref{Prop-spec-reg-Koop} is necessary. Consider $G=\mathbb Z_2=\{0,1\}$. Equip $X=\mathbb Z_2$ with the uniform probability measure $\mu$. Let $G$ act on $(X,\mu)$ by shifts. Then for any $m=\alpha\delta_0+\beta\delta_1\in\mathbb C[G]$ one has:
$$\sigma(\rho_0(m))=\sigma(\rho_1(m))=\{1,\alpha-\beta\},\;\;\sigma(\kappa_0)(m)=\{\alpha-\beta\}$$ and thus the two spectra do not coincide when $\alpha-\beta\neq 1$.
\end{Rem}
\subsection{Equivalence of Koopman and groupoid representations for a hyperfinite action.}\label{SubsecKoopGroup}
Part $3)$ of Theorem \ref{ThMain} follows from the next:
\begin{Prop}\label{PropKoopGroup}
For a hyperfinite measure class preserving action of a countable group $G$ on a standard probability space $(X,\mu)$ one has $\kappa\sim\pi.$
\end{Prop}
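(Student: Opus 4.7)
The plan is to establish both weak containments $\pi\prec\kappa$ and $\kappa\prec\pi$ and then conclude via Corollary \ref{CoWeakCond}. The direction $\pi\prec\kappa$ is already in hand: by Proposition \ref{PropMainHecke2} one has $\rho_x\sim\pi$ and by Proposition \ref{Prop-spec-reg-Koop} one has $\rho_x\prec\kappa$ for $\mu$-almost every $x$, so $\pi\prec\kappa$ follows by transitivity. The remaining task, where the hyperfiniteness hypothesis is crucial, is to prove $\kappa\prec\pi$.

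My plan is to construct a sequence of isometries that asymptotically intertwine $\kappa$ with the unitarily equivalent representation $\tilde\pi$ on $L^2(\mathcal R,\nu_l)$ defined in \eqref{EqTilPi}. Using hyperfiniteness, write $\mathcal R=\bigcup_{n\geqslant 1}\mathcal R_n$ on a set of full measure, with $\mathcal R_n\subset\mathcal R_{n+1}$ finite-class subequivalence relations; this increasing arrangement is always possible since the equivalence relation generated by finitely many finite-class relations is itself finite-class. For each $n$ define a map $V_n:L^2(X,\mu)\to L^2(\mathcal R,\nu_l)$ by
\begin{equation*}
(V_n f)(x,y)=\frac{f(x)}{\sqrt{|[x]_{\mathcal R_n}|}}\,\mathbbm 1_{(x,y)\in\mathcal R_n}.
\end{equation*}
Using the identity $\int h\,d\nu_l=\int_X\sum_{y\in Gx}h(x,y)\,d\mu(x)$ one verifies immediately that $\|V_n f\|_{\nu_l}=\|f\|_{L^2(X,\mu)}$, so each $V_n$ is an isometry.

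The heart of the argument is to establish the asymptotic intertwining
\begin{equation*}
\|\tilde\pi(g)V_n f-V_n\kappa(g)f\|_{\nu_l}\longrightarrow 0\quad\text{as }n\to\infty
\end{equation*}
for every $g\in G$ and $f\in L^2(X,\mu)$. On the good set $\{x:g^{-1}x\in[x]_{\mathcal R_n}\}$, the two expressions agree pointwise, because $[g^{-1}x]_{\mathcal R_n}=[x]_{\mathcal R_n}$ there. On the complementary bad set $E_{n,g}$, the two functions have disjoint supports in the $y$-variable, being indicators of distinct $\mathcal R_n$-classes; a direct calculation combined with the change of variables $u=g^{-1}x$, which absorbs the Radon-Nikodym factor $d\mu(g^{-1}x)/d\mu(x)$ appearing in $\tilde\pi$, yields
\begin{equation*}
\|\tilde\pi(g)V_n f-V_n\kappa(g)f\|_{\nu_l}^2=2\int_{g^{-1}E_{n,g}}|f(u)|^2\,d\mu(u).
\end{equation*}
Hyperfiniteness forces $\mu(E_{n,g})\to 0$ (since $(x,g^{-1}x)\in\mathcal R$ eventually lies in some $\mathcal R_n$ for $\mu$-a.e.\ $x$), quasi-invariance then gives $\mu(g^{-1}E_{n,g})\to 0$, and absolute continuity of $|f|^2 d\mu$ with respect to $\mu$ makes the bound vanish. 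Since each $V_n$ is isometric, this shows that every matrix coefficient of $\kappa$ is a limit of matrix coefficients of $\tilde\pi\cong\pi$, giving $\kappa\prec\pi$. The main technical obstacle is the careful handling of the Radon-Nikodym cocycle: one must verify precisely that the pointwise cancellation on the good set and the disjoint-support phenomenon on the bad set both follow from $[g^{-1}x]_{\mathcal R_n}=[x]_{\mathcal R_n}$ whenever $g^{-1}x\in[x]_{\mathcal R_n}$, and that the change of variables turns $K(g,x)^2\,d\mu(x)$ cleanly into $d\mu(u)$.
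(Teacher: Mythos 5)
Your argument is correct and reaches the same conclusion by a genuinely different route. The paper also reduces to showing $\|\kappa(m)\|\leqslant\|\tilde\pi(m)\|$ (equivalently $\kappa\prec\tilde\pi$), but it uses the single-generator formulation of hyperfiniteness (condition $1')$, via \cite{FM1}): it takes an aperiodic automorphism $U$ generating $\mathcal R$, builds a Rokhlin tower $C,UC,\ldots,U^{N-1}C$ with Lemma \ref{ThRokh}, and transplants a near-maximizing vector $\eta$ for $\kappa(m)$ to the vector $\tilde\eta(x,y)=\eta(x)\mathbbm{1}_C(y)\sum_j\delta_{x,U^j(y)}$, which concentrates all mass of the fiber over $x$ at the single base point of its tower column; errors come from the tower's top, bottom and complement. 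You instead use the increasing-union formulation $\mathcal R=\bigcup_n\mathcal R_n$ directly and spread the mass uniformly over the finite class, $f(x)/\sqrt{|[x]_{\mathcal R_n}|}$, obtaining exact isometries $V_n$ that asymptotically intertwine $\kappa$ with $\tilde\pi$. This buys several things: no Rokhlin lemma, no reduction to the case of infinite orbits (your $V_n$ works verbatim when classes are finite, whereas the paper must dispose of finite $X$ separately to get $U$ aperiodic), and a statement at the level of arbitrary matrix coefficients rather than norms of positive Hecke elements. Your computations check out: the good/bad set dichotomy, the disjointness of supports of $\tilde\pi(g)V_nf(x,\cdot)$ and $V_n\kappa(g)f(x,\cdot)$ when $g^{-1}x\notin[x]_{\mathcal R_n}$, and the change of variables absorbing the cocycle are all correct, and $\mu(g^{-1}E_{n,g})\to0$ follows since the sets $E_{n,g}$ decrease to a null set and quasi-invariance is preserved under countable decreasing intersections.

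One side remark is wrong as stated: the equivalence relation generated by finitely many finite-class relations need \emph{not} be finite-class (already on $\mathbb Z$, the pairings $\{2k,2k+1\}$ and $\{2k+1,2k+2\}$ generate a single infinite class), so your justification for arranging the $\mathcal R_n$ to be increasing does not work. This is harmless for the proof, since the increasing union of finite subrelations is the standard meaning of hyperfiniteness and also follows from the single-generator characterization $1')$ that the paper invokes (Dye's theorem for $\mathbb Z$-actions), but you should either cite that or simply take the increasing form as the definition rather than derive it by taking joins.
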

\noindent In the proof we will use the following result (see \cite{ChacFried:65}, Lemma 4):
\begin{Lm}\label{ThRokh} Let $U$ be an aperiodic measure class preserving transformation of a Lebesgue space $(X,\mu)$. Then for any $N$ and any $\epsilon>0$ there exists a  measurable set $A$ such that the sets $A,UA,\ldots,U^{N-1}A$ are pairwise disjoint and $\mu(A\cup UA\cup\ldots\cup U^{N-1}A)>1-\epsilon$. \end{Lm}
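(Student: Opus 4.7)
I would prove this by the classical Kakutani skyscraper (induced-transformation) construction combined with a maximality argument. The plan is to pick a base set $F\subset X$, decompose $X$ (mod null) into Kakutani towers indexed by the first-return time of $F$, select every $N$-th floor of each tall tower to form $A$, and then upgrade $A$ via maximality so that the iterates cover all but $\epsilon$ of $X$.

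\textbf{Construction.} Fix a measurable $F\subset X$ of positive measure. By aperiodicity together with $\mu(X)=1$, the first return time $n(x)=\min\{k\geqslant 1:U^k x\in F\}$ is finite for $\mu$-a.e.\ $x\in F$ (standard Poincar\'e-type argument: an infinite wandering set forces infinitely many pairwise disjoint positive-measure iterates in a probability space). Partition $F=\bigsqcup_{k\geqslant 1}F_k$ with $F_k=\{x\in F:n(x)=k\}$; then modulo a null set $X=\bigsqcup_{k\geqslant 1}\bigsqcup_{j=0}^{k-1}U^j F_k$. Set $A_k=\bigsqcup_{i=0}^{\lfloor k/N\rfloor-1}U^{iN}F_k$ (empty if $k<N$) and $A=\bigsqcup_{k\geqslant N}A_k$. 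Within each column of height $k$ the iterates $\bigsqcup_{i=0}^{N-1}U^i A_k$ fill exactly the bottom $N\lfloor k/N\rfloor$ floors, so $A, UA,\ldots,U^{N-1}A$ are pairwise disjoint. The complement of $\bigcup_{i=0}^{N-1}U^i A$ consists of the short columns ($k<N$) together with the column tops $\bigsqcup_{k\geqslant N}\bigsqcup_{j=N\lfloor k/N\rfloor}^{k-1}U^j F_k$.

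\textbf{Measure estimate and main obstacle.} The main obstacle is controlling the $\mu$-measure of this complement in the measure-class-preserving setting: in the measure-preserving case shrinking $F$ to satisfy $N\mu(F)<\epsilon$ works directly because each $\mu(U^j F_k)=\mu(F_k)$, but for quasi-invariant $\mu$ the Radon--Nikodym cocycle of $U$ can amplify the top levels' measures relative to $F_k$, and the straightforward estimate fails. I would circumvent this by defining $A$ itself through maximality rather than by the explicit tower formula. Let $\mathcal A_N=\{A\subset X \text{ measurable}: A, UA,\ldots,U^{N-1}A\text{ pairwise disjoint}\}$ and $s=\sup\{\mu(\bigcup_{i=0}^{N-1}U^i A):A\in\mathcal A_N\}$; take an ascending sequence in $\mathcal A_N$ whose covers tend to $s$ and let $A^*$ be its union, so $A^*\in\mathcal A_N$ attains $s$. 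Assume for contradiction that $s\leqslant 1-\epsilon$ and set $C=X\setminus\bigcup_i U^i A^*$ and $C'=\bigcap_{j=0}^{N-1}U^{-j}C$. One shows $\mu(C')>0$: if not, then for a.e.\ $x\in C$ the return time from $C$ to $\bigcup_i U^i A^*$ is at most $N-1$, and the tower decomposition of Step~2 applied to the induced first-return map on $C\cup\bigcup_i U^i A^*$ forces $\mu(C)=0$, contradicting $\mu(C)\geqslant\epsilon$. Now apply Lemma~\ref{Lm-A-refining} successively to $U, U^2, \ldots, U^{N-1}$ on $C'$ to extract $D\subset C'$ with $\mu(D)>0$ and $D, UD,\ldots,U^{N-1}D$ pairwise disjoint. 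By construction of $C'$ each $U^j D$ is disjoint from $\bigcup_i U^i A^*$, so $A^*\sqcup D\in\mathcal A_N$ strictly exceeds $s$, contradicting maximality. Hence $s>1-\epsilon$ and any near-optimal $A^*$ gives the desired set.
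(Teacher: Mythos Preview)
The paper does not give a proof of this lemma; it is simply cited from Chacon--Friedman \cite{ChacFried:65}, Lemma~4. So there is nothing to compare approaches against, and the question is only whether your argument stands on its own.

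It does not: the step ``One shows $\mu(C')>0$'' is where it breaks. In fact maximality of $A^*$ forces the \emph{opposite} conclusion $\mu(C')=0$, by precisely the augmentation you carry out in the next sentence: if $\mu(C')>0$ one extracts (via iterated Lemma~\ref{Lm-A-refining}) a $D\subset C'$ with $D,UD,\ldots,U^{N-1}D$ disjoint and enlarges $A^*$, contradicting that $A^*$ attains $s$. Your proposed justification (bounded first-entry time into $T$ ``forces $\mu(C)=0$'') is false: $\mu(C')=0$ says exactly that a.e.\ point of $C$ enters $T$ within $N-1$ steps, and this is compatible with $\mu(C)$ being large. Already in the measure-preserving case a maximal $A^*\in\mathcal A_N$ only yields $X=\bigcup_{|k|\leqslant N-1}U^kA^*$ (mod null), hence merely $\mu(C)\leqslant (N-1)/N$, nowhere near $<\epsilon$. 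Thus maximizing the cover over $\mathcal A_N$ by itself cannot produce the $\epsilon$-estimate; the non-singular proof needs an additional idea --- for instance, first build (by iterating Lemma~\ref{Lm-A-refining} and exhaustion) a sweeping base $B$ with $B,UB,\ldots,U^{L-1}B$ disjoint for $L\gg N$, then run your skyscraper construction over $B$ (all columns have height $\geqslant L$) and use pigeonhole over the $\sim L/N$ possible phases of the $N$-periodic floor selection to push the uncovered remainder below $\epsilon$. A secondary gap: the existence of an \emph{ascending} sequence in $\mathcal A_N$ whose covers approach $s$ is also unjustified.
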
\noindent
Lemma \ref{ThRokh} is a generalization of the famous Rohlin Lemma from \cite{Rokh49} to the case of quasi-invariant measures.
\begin{proof}[{\bf Proof of Proposition \ref{PropKoopGroup}}] First notice that in the case of a finite $X$ the groupoid representation $\pi$ is unitarily equivalent to a direct sum of finitely many copies of the Koopman representation $\kappa$. Therefore, without loss of generality we can assume that for all $x\in X$ the orbit $Gx$ is infinite. Since Koopman representation uses a Radon-Nikodym derivative in the definition it will be convenient to replace $\pi$ by a unitarily equivalent representation $\tilde \pi$ (see \eqref{EqTilPi}). By Corollary \ref{CoWeakCond} and part $1)$ of Theorem \ref{ThMain} it is sufficient to show that
$\|\kappa(m)\|\leqslant\|\tilde\pi(m)\|$ for every positive element $m\in\mathbb C[G]$.
Without loss of generality we will assume that $\|\kappa(m)\|=1$.

Since the action of $G$ on $X$ is
hyperfinite there exists a measure-class preserving automorphism $U$ generating the equivalence relation $\mathcal R$ generated by $G$ on $X$ (see \eg \cite{FM1}, Proposition 4.1). Clearly, $U$ is aperiodic. For $g\in G$ and $x\in X$ denote by $n_g(x)$ the integer number such that
$gx=U^{n_g(x)}x$. Observe that for every $g\in G$ the function $n_g(x)$ is measurable.
Fix $\delta>0$. Let $\eta\in L^2(X,\mu)$ be a unit vector such that $(\kappa(m)\eta,\eta)>1-\delta$.
Without loss of generality we may assume that the set of values
of $\eta$ is finite. Let $K=\max\{\|\eta\|_\infty,1\}$.

Further, find a number $L$ such that
$$\mu(\{x:|n_{g^{\pm 1}}(x)|\leqslant L\;\text{for all}\;g\in\supp(m)\})\geqslant 1-\tfrac{\delta}{2K^2}.$$ Introduce a set $$\Omega=\{x:|n_{g^{\pm 1}}(x)|\leqslant L\;\text{for all}\;g\in\supp(m)\}.$$ Choose $N$ such that $\tfrac{L}{N}\leqslant\tfrac{\delta}{8K^2}$. Using Lemma \ref{ThRokh} one can construct a set $C$ such that the sets $C,UC,\ldots, U^{N-1}C$ are pairwise disjoint,
and $$\mu(C\cup  UC\cup\ldots\cup U^{N-1}C)\geqslant 1-\tfrac{\delta}{4K^2}.$$ Set $C_j=U^j(C)$ for $j=0,1,\ldots,N-1$. Let $\Sigma=(C_L\cup C_{L+1}\cup\ldots\cup C_{N-L-1})\cap \Omega$. Then $\mu(\Sigma)\geqslant 1-\tfrac{\delta}{K^2}$.
Consider the functions
$$\tilde\eta(x,y)=\eta(x)\mathbbm{1}_C(y)\sum\limits_{j=0}^{N-1}\delta_{x,U^j(y)},
$$ where $\delta_{x,y}$ is the Kronecker delta,
$$\tilde\eta_0(x,y)=\mathbbm{1}_\Sigma(x)\tilde\eta(x,y)
\;\;\text{and}\;\;\eta_0(x)=\mathbbm{1}_\Sigma(x)\eta(x),$$ where $\mathbbm{1}_A$ stands for the characteristic function of a set $A$.
Observe that for every $x\in X$ there exists at most one $y$ such that $\tilde\eta(x,y)\neq 0$.
By definition of $\nu_l$ one has:
$$\|\tilde\eta\|^2=\int\limits_X\sum\limits_{y\sim x}|\tilde\eta(x,y)|^2\dd\mu(x)=\sum\limits_{j=0}^{N-1}\int\limits_{C_j}|\eta(x)|^2\dd\mu(x)\leqslant \|\eta\|^2.$$ Using similar computations one can show that $\|\tilde\eta_0\|=\|\eta_0\|$. Since $\mu(\Sigma)>1-\delta$ we obtain that $$\|\eta\|^2-\delta\leqslant \|\tilde\eta_0\|^2\leqslant \|\tilde\eta\|^2\leqslant \|\eta\|^2.$$

Let $g\in\supp(m)$. Assume that $x\in C_j\cap A$, where $L\leqslant j\leqslant N-L-1$.
Let $y=U^{-j}(x)$. One has $-L\leqslant n_{g^{-1}}(x)\leqslant L$ and $g^{-1}x\in C_{j+n_{g^{-1}}(x)}$.
It follows that $$\tilde\eta_0(x,y)=\eta_0(x),\;\;\tilde\eta(g^{-1}x,y)=\eta(g^{-1}x).$$
If $x\notin \Sigma$ then $\eta_0(x)=0$ and $\tilde\eta_0(x,y)=0$ for all $y\in Gx$. We obtain:
\begin{align*}(\tilde\pi(g)\tilde\eta,\tilde\eta_0)=\int\limits_X\sum\limits_{y\sim x}\sqrt{\tfrac{\dd\mu(g^{-1}(x))}{\dd\mu(x)}}\tilde\eta(g^{-1}x,y)\overline{\tilde\eta_0(x,y)}\dd\mu(x)=\\
\int\limits_X\sqrt{\tfrac{\dd\mu(g^{-1}(x))}{\dd\mu(x)}}\eta(g^{-1}x)\eta_0(x)\dd\mu(x)=(\kappa(g)\eta,\eta_0).\end{align*}
Since $\|\tilde\eta-\tilde\eta_0\|\leqslant\|\eta-\eta_0\|\leqslant \delta^{\frac{1}{2}}$ the latter implies that $$|(\pi(g)\tilde\eta,\tilde\eta)-(\kappa(g)\eta,\eta)|\leqslant 2\delta^{\frac{1}{2}}.$$
Finally, we get: $$|(\tilde\pi(m)\tilde\eta,\tilde\eta)-(\kappa(m)\eta,\eta)|\leqslant 2\delta^{\frac{1}{2}}\|m\|_1,\;\;\text{where}\;\;\|m\|_1=\sum\limits_{g\in\supp(m)} |m(g)|.$$ Since $\delta>0$ is arbitrary, the inequality $\|\pi(m)\|\geqslant\|\kappa(m)\|$ follows. This finishes the proof of Proposition \ref{PropKoopGroup} and Hence part $3)$ of Theorem \ref{ThMain}.\end{proof}

Theorem 1 is now proven. We finish this section by deriving Kuhn's Theorem \ref{ThKuhn} from Theorem \ref{ThMain}.
\begin{proof}[{\bf Proof of Theorem \ref{ThKuhn}}] Let $G$ act ergodically by measure class preserving automorphisms on a probability measure space $(X,\mu)$. Assume that this action is Zimmer amenable.  Then by Theorem \ref{ThMain} the corresponding Koopman representation is weakly equivalent to the quasi-regular representation $\rho_x$ of $G$ for almost every $x$. By result of Adams, Eliott and Giordano \cite{AEG94} Zimmer's amenability implies that $\st_G(x)$ is amenable for almost every $x$. Let $x$ be such that $\rho_x\sim\kappa$ and $\st_G(x)$ is amenable. Using the well known fact that
for a subgroup $H<G$ $$\rho_{G/H}\prec\lambda_G\;\;\text{if and only if}\;\;H\;\;\text{is amenable}$$ (see \eg \cite{BG}, Proposition 3.5) we obtain that  $\rho_x\prec\lambda_G$. This shows that $\kappa\prec \lambda_G$.
\end{proof}

\section{Applications to weakly branch groups.}\label{SubsecAp}
 We recall some notions related to group actions on rooted trees. We refer the reader to \cite{Grig11} and \cite{GNS00} for detailed definitions and properties of these actions.

A $d$-regular rooted tree is a tree $T$, with vertex set divided into levels $V_n$, $n\in\mathbb Z_+$, such that $V_0$ consists of one vertex $v_0$ (called the root of $T$), the edges are only between consecutive levels, and each vertex from $V_n$, $n\geqslant 0$ (we consider infinite trees), is connected by an edge to exactly $d$ vertices from $V_{n+1}$ (and one vertex from $V_{n-1}$ for $n\geqslant 1$). An automorphism of a rooted tree $T$ is any automorphism of the graph $T$ preserving the root. Denote by $\aut(T)$ the group of automorphisms of $T$.

Let $T$ be a $d$-regular rooted tree, $d\geqslant 2$, and $G<\aut(T)$. The rigid stabilizer of a vertex $v$  is the subgroup $\rist_v(G)=\{g\in G:\supp(g)\subset T_v\}$. The rigid stabilizer of level $n$ is
$$\rist_n(G)=\prod\limits_{v\in V_n}\rist_v(G).$$ $G$ is called \emph{branch} if it is transitive on each level and $\rist_n(G)$ is a subgroup of finite index in $G$ for all $n$. $G$ is called \emph{weakly branch} if it is transitive on each level $V_n$ of $T$ and $\rist_v(G)$ is nontrivial for each $v$.

For each level $V_n$ of a $d$-regular rooted tree an automorphism $g$ of $T$ can be presented in the form
\begin{equation}\label{EqRest}g=\sigma\cdot(g_1,\ldots,g_{d^n}),\end{equation} where $\sigma\in\sym(V_n)$ is a permutation of the vertices from $V_n$ and $g_i$ are the restrictions of $g$ on the subtrees emerging from the vertices of $V_n$.

 For an element $g\in\aut(T)$ denote by $k_n(g)$ the number of restrictions $g_i$ to the vertices of level $n$ such that $g_i$ is not equal to the identity automorphism. We call $g$ subexponentially bounded if for every $0<\gamma<1$ one has
 $$\lim\limits_{n\to\infty}k_n(g)\gamma^n=0.$$ A group $G<\aut(T)$ is subexponentially bounded if each $g\in G$ is subexponentially bounded. Many important examples of branch and weakly branch groups (\eg the group $\mathcal G$ of intermediate growth constructed by the second author, Gupta-Sidki $p$-groups and Basilica group) are subexponentially bounded groups.

For a $d$-regular rooted tree $T$ its boundary $\partial T$ is the set of infinite paths starting from $v_0$. Observe that $\partial T$ can be identified with a space of sequences $\{x_j\}_{j\in\mathbb N}$ where $x_j\in\{1,\ldots,d\}$.  For a vertex $v$ of $T$ we denote by $\partial T_v\subset\partial T$ the set of paths passing through $v$. Supply $\partial T$ by the topology generated by the sets $\partial T_v$. Automorphisms of $T$ act naturally on $\partial T$ by homeomorphisms. Notice that $\partial T$ admits a unique $\aut(T)$-invariant measure $\mu$. This measure is uniform in the sense that
 $$\mu(\partial T_v)=\tfrac{1}{d^n}\;\;\text{for any}\;\;n\;\;\text{and any}\;\;v\in V_n.$$ In \cite{GNS00} it is shown that this measure is ergodic with respect to a group $G<\aut(T)$ if and only if the action of $G$ is transitive on each level $V_n$ of $T$. Moreover, in this case it is uniquely ergodic.

 Further, let $G$ be a weakly branch group acting on a $d$-regular rooted tree $T$. Recall that for $x,y\in \partial T$ from the same $G$-orbit the corresponding quasi-regular representations are unitary isomorphic. Denote by $\mathcal O$ the set of orbits of $G$ on $\partial T$. For $\omega\in\mathcal O$ denote by $\rho_\omega$ the corresponding quasi-regular representation of $G$. Let \begin{equation}\label{EqMP}\mathcal P=\{p=(p_1,p_2,\ldots,p_d):p_i> 0\;\;\text{for}\;\;i=1,2,\ldots,d\;\;\text{and}\;\;\sum\limits_{i=1}^dp_i=1\}\end{equation} be the set of all probability distributions on the alphabet $\{1,2,\ldots,d\}$ assigning positive probability to every letter and \begin{equation}\label{EqMP*}\mathcal P^{*}=\{p\in\mathcal P:p_i\neq p_j\;\;\text{for all}\;\;1\leqslant i< j\leqslant d\}.\end{equation} For $p\in\mathcal P^{*}$ denote by $\mu_p=\prod\limits_{\mathbb N}p$ the corresponding Bernoulli measure on $\partial T$. It is shown in \cite{DuGr15}, Proposition 2,  that subexponentially bounded automorphisms preserve the measure class of $\mu_p$. Assuming that $G$ is a subexponentially bounded group we denote by $\kappa_p$ the Koopman representation associated to the action of $G$ on $(\partial T,\mu_p)$.

Using Mackey's criterion of irreducibility of quasi-regular representations Bartholdi and Grigorchuk in \cite{BG} showed that quasi-regular representations $\rho_\omega$ corresponding to the action of a weakly branch group $G$ on the boundary of a rooted tree are irreducible for all $\omega\in \mathcal O$. Moreover, in \cite{DuGr15} the authors proved the following.
 \begin{Th} Let $G$ be a subexponentially bounded weakly branch group acting on the boundary of a $d$-regular rooted tree $T$. For every $p\in\mathcal P^{*}$ the representation $\kappa_p$ of $G$ is irreducible. Moreover, the representations of $G$ from  $\{\kappa_p:p\in\mathcal P^{*}\}\cup \{\rho_\omega:\omega\in\mathcal O\}$ are pairwise disjoint.\end{Th}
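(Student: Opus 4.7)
The plan has two parts: first establish irreducibility of each $\kappa_p$, and then prove pairwise disjointness within the combined family $\{\kappa_p : p \in \mathcal P^{*}\} \cup \{\rho_\omega : \omega \in \mathcal O\}$.

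For irreducibility of $\kappa_p$, I would aim to show that $\kappa_p(G)' = \mathbb C \cdot \mathrm{I}$. The weakly branch hypothesis provides nontrivial rigid stabilizers $\rist_v(G)$ supported on $\partial T_v$, so for each $v$ the operator $\kappa_p(\rist_v(G))$ decomposes as $V_v \otimes \mathrm{I}$ on the tensor-product-type decomposition $L^2(\partial T,\mu_p) \cong L^2(\partial T_v,\mu_p) \otimes L^2(\partial T\setminus\partial T_v,\mu_p)$. Any $U \in \kappa_p(G)'$ must commute with each such $V_v \otimes \mathrm{I}$, and iterating this simultaneously over all levels $V_n$ forces $U$ to align with arbitrarily fine partitions, eventually placing $U$ in (the weak closure of) $L^\infty(\partial T,\mu_p)$ acting by multiplication. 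Level-transitivity then says any $G$-invariant $L^\infty$ function must be constant by ergodicity. The condition $p \in \mathcal P^{*}$ (all weights distinct) is what rules out the extra symmetries of the coordinate measure that would otherwise produce nontrivial commuting $L^\infty$ elements when one transports through the action.

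For disjointness I would split into three cases. The case $\rho_\omega \not\sim \rho_{\omega'}$ for $\omega \neq \omega'$ is handled by Mackey's criterion as in \cite{DuGr15}: points from distinct orbits have non-conjugate stabilizers, and Mackey's analysis of induced representations then prevents any intertwiner. The case $\kappa_p \not\sim \rho_\omega$ I would settle by comparing spectral types: $\rho_\omega$ has a cyclic vector $\delta_\omega$ whose associated spectral measure on $\widehat{\st_G(\omega) \backslash G}$ is discrete (supported on the orbit structure), whereas by the previous paragraph $\kappa_p$ acts irreducibly on an atomless space with a continuous cyclic spectral type; since both are irreducible, equivalence is ruled out. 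Finally, for $\kappa_p \not\sim \kappa_{p'}$ with distinct $p, p' \in \mathcal P^{*}$, I use that the Bernoulli measures $\mu_p$ and $\mu_{p'}$ are mutually singular by the strong law of large numbers applied to letter frequencies, combined with the asymptotic distinctness of the Radon-Nikodym cocycles of the two actions; an intertwining unitary between the two irreducible Koopman representations would transport one cocycle to the other, contradicting the singularity.

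The main obstacle is the irreducibility step. The branch-type decomposition argument needs subexponential-boundedness of $G$ to keep the action inside the measure class of $\mu_p$ (Proposition~2 of \cite{DuGr15}), and one must verify carefully that the iterated block-diagonal constraints really drive $U$ into the multiplication algebra rather than merely into a larger von Neumann subalgebra. The secondary difficulty is disjointness of $\kappa_p$ from $\kappa_{p'}$: mutual singularity of measures does not in general imply disjointness of associated Koopman representations, so the argument must genuinely leverage the irreducibility from the first part, together with the explicit Bernoulli structure, to convert measure-theoretic singularity into representation-theoretic disjointness.
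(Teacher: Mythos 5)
First, a framing remark: the paper does not prove this theorem --- it is quoted from \cite{DuGr15} --- so the comparison is with the argument given there. Your overall architecture (trivial commutant via rigid stabilizers, then case-by-case disjointness) matches that reference, but the central step is missing, and there is a decisive sanity check: nothing in your irreducibility sketch actually uses $p\in\mathcal P^{*}$. The decomposition $L^2(\partial T,\mu_p)\cong L^2(\partial T_v)\otimes L^2(\partial T\setminus\partial T_v)$ and the fact that $\kappa_p(\rist_v(G))$ acts as $V_v\otimes\mathrm{I}$ hold verbatim for the uniform weights $p=(1/d,\dots,1/d)$, for which $\kappa_p$ is a countable direct sum of finite-dimensional subrepresentations (functions depending on finitely many levels are invariant) and is very far from irreducible. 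So ``iterating the block constraints'' cannot by itself drive $U$ into $L^\infty(\partial T,\mu_p)$: commuting with $V_v\otimes\mathrm{I}$ does not make $U$ block-diagonal for the partition $\{\partial T_w\}_{w\in V_n}$ unless one first shows that the multiplication projections $M_{\mathbbm{1}_{\partial T_v}}$ themselves lie in $\kappa_p(G)''$. That is the actual content of the proof in \cite{DuGr15}: subexponential boundedness is used to produce sequences $g_n$ built from rigid stabilizers for which $\kappa_p(g_n)$ converges weakly to an explicit multiplication operator, and it is precisely the hypothesis $p_i\neq p_j$ that makes the limiting Radon--Nikodym factors separate the $d$ subtrees and generate all of $L^\infty(\partial T,\mu_p)$; once $L^\infty(\partial T,\mu_p)\subset\kappa_p(G)''$, maximal abelianness plus ergodicity of the level-transitive action finishes the commutant computation. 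Your one-sentence appeal to ``$\mathcal P^{*}$ rules out extra symmetries'' is exactly the step that needs a proof.

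The disjointness part has two further soft spots. For $\kappa_p$ versus $\rho_\omega$, the comparison of ``discrete versus continuous cyclic spectral type on $\widehat{\st_G(\omega)\backslash G}$'' is not a defined invariant for irreducible representations of a non--type-I group; a workable substitute is, for instance, that $\rho_x$ restricted to $\st_G(x)$ has a nonzero invariant vector ($\delta_x$) while $\kappa_p$ restricted to $\st_G(x)$ has none, or else the weak-limit device below. For $\kappa_p$ versus $\kappa_{p'}$ you correctly observe that mutual singularity of $\mu_p$ and $\mu_{p'}$ does not by itself give disjointness, but you do not supply the missing mechanism. The one that works is again the same lemma: the projections $M_{\mathbbm{1}_{\partial T_v}}$ arise as weak limits of $\kappa_p(m_n)$ for sequences $m_n\in\mathbb C[G]$ that do not depend on $p$; an intertwining unitary $U$ must therefore satisfy $UM^{(p)}_{\mathbbm{1}_{\partial T_v}}U^{*}=M^{(p')}_{\mathbbm{1}_{\partial T_v}}$ for every $v$, and evaluating these identities on $U\mathbbm{1}$ yields $\mu_p(\partial T_v)=\int_{\partial T_v}|U\mathbbm{1}|^2\,\dd\mu_{p'}$ for all $v$, hence $\mu_p\ll\mu_{p'}$, contradicting singularity. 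Thus all three cases hinge on realizing $L^\infty$ as canonical weak limits of group-algebra elements, which is exactly what your proposal leaves unproved.
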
\noindent
Using Theorem \ref{ThMain} we will strengthen this result (Corollary \ref{CoWeak}).

  For $g\in \aut(T)$ a point $x=x_1x_2x_3\ldots\in\partial T$  is called $g$-rigid if there exist $n\in\mathbb N$ and $v=x_1x_2\ldots x_n\in V_n$ such that the restriction $g|_{\partial T_v}$ is trivial. For $G<\aut(T)$ denote by $R(G)$ the set of points $x\in \partial T$ such that $x$ is $g$-rigid for all $g\in G$. Such points are called rigid. Let $\mathcal O_{R(G)}$ be the set of $G$-orbits of points from $R(G)$. From the proof of Proposition 2 of \cite{DuGr15} we obtain:
 \begin{Lm}\label{LmSubexpRigid} Let $T$ be a $d$-regular rooted tree and $G<\aut(T)$ be subexponentially bounded. Then for any $p\in\mathcal P$ from \eqref{EqMP} one has $\mu_\mathcal{P}(R(G))=1$.\end{Lm}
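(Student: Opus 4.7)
The plan is to reduce the lemma to a Borel--Cantelli type estimate for each element of $G$, and then take a countable intersection. First I would fix $g\in G$ and show that the set $R_g$ of $g$-rigid points has full $\mu_p$-measure. The desired conclusion $\mu_p(R(G))=1$ follows immediately, since $R(G)=\bigcap_{g\in G}R_g$ is a countable intersection (the weakly branch groups under consideration are countable) of conull sets.

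For the key measure estimate, denote by $A_n(g)\subset V_n$ the set of vertices $v$ at which the section $g|_{\partial T_v}$ is nontrivial; by the definition preceding the lemma, $|A_n(g)|=k_n(g)$. A path $x=x_1x_2\dots\in\partial T$ fails to be $g$-rigid precisely when the prefix $x_1\dots x_n$ lies in $A_n(g)$ for every $n$, i.e.,
$$\partial T\setminus R_g \;=\; \bigcap_{n\geqslant 1}\;\bigcup_{v\in A_n(g)}\partial T_v.$$
Setting $q=\max_{1\leqslant i\leqslant d}p_i$, one has $q<1$ since $d\geqslant 2$ and $p\in\mathcal P$ has strictly positive entries summing to $1$. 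For any $v\in V_n$ the Bernoulli product formula gives $\mu_p(\partial T_v)\leqslant q^n$, hence
$$\mu_p\Big(\bigcup_{v\in A_n(g)}\partial T_v\Big)\;\leqslant\; k_n(g)\,q^n.$$
Subexponential boundedness of $g$ says precisely that $k_n(g)\,\gamma^n\to 0$ for every $\gamma\in(0,1)$; applying this with $\gamma=q$ shows that the right-hand side tends to $0$, so $\mu_p(\partial T\setminus R_g)=0$ and $R_g$ is conull.

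I do not expect a substantive obstacle: once the correct sets $A_n(g)$ are identified, the argument is a one-line comparison against a geometric sequence, with the essential input being the quantitative strength of ``subexponentially bounded'' versus the exponential shrinkage $q^n$ of cylinder sets. The only point that needs a sentence of care is the strict inequality $q<1$, which relies on the tree being at least binary and on every coordinate of $p$ being positive; both are built into the hypotheses $d\geqslant 2$ and $p\in\mathcal P$. This is precisely the estimate extracted from the proof of Proposition~2 in \cite{DuGr15} cited above.
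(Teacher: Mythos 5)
Your argument is correct and is essentially the one the paper intends: the paper gives no proof here but defers to the proof of Proposition~2 of \cite{DuGr15}, which rests on exactly this estimate $\mu_p\bigl(\bigcup_{v\in A_n(g)}\partial T_v\bigr)\leqslant k_n(g)\,q^n\to 0$ with $q=\max_i p_i<1$, followed by a countable intersection over $g\in G$. Your reading of ``the restriction $g|_{\partial T_v}$ is trivial'' as ``the section of $g$ at $v$ is the identity'' (so that $|A_n(g)|=k_n(g)$) is the right one, consistent with the paper's use of ``restrictions'' in \eqref{EqRest}.
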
 Recall that for an action of a group $G$ by homeomorphisms on a topological space $X$ a point $x$ is called \emph{typical} if for every $g\in G$ either $gx\neq x$ or $g$ acts trivially on some neighborhood of $x$. Clearly, the set of all typical points is open and $G$-invariant. Observe that for $G<\aut(T)$ rigid point $x\in\partial T$ is typical. The next proposition is a topological version of Proposition \ref{PropLocIsom} 
 and is a generalization of Proposition 6.21 from \cite{GNS00} (see also \cite{Grig11}, Proposition 8.8).
 \begin{Prop}
 \label{PropRegIsom} Let $G$ be a countable group acting minimally on a topological space $X$. Let $n\in\mathbb N$ and $g_1,\ldots,g_n\in G$. Then for any typical points $x,y\in X$ the orbital graphs $\Gamma_{x,g_1,\ldots,g_n}$ and $\Gamma_{y,g_1,\ldots,g_n}$ are locally isomorphic.
 \end{Prop}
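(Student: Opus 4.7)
The plan is to mimic Proposition \ref{PropLocIsom}, substituting ``positive measure'' by ``nonempty open'', ``full measure'' by ``dense'', and ergodicity by minimality of the action.

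Fix $k\in\mathbb{N}$, the elements $g_1,\ldots,g_n\in G$, and two typical points $x,y\in X$. The first observation is that the isomorphism class (as a marked rooted graph) of $B_k(z)$ is determined by the ``coincidence pattern''
$$F_z := \{h \in H : hz = z\}, \qquad H := \{s_i^{-1} s_j : i,j\leqslant k\}\, \cup\, \{s_j^{-1} g_\ell s_i : i,j\leqslant k,\, \ell\leqslant n\},$$
since the correspondence $s_i x \mapsto s_i z$ for $i\leqslant k$ is a well-defined marked rooted graph isomorphism $B_k(x)\to B_k(z)$ precisely when $F_z = F_x$. Note that $H$ is a finite subset of $G$.

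Next I would show that $\{z\in X : F_z = F_x\}$ is a neighborhood of $x$. For each $h\in H$ with $hx = x$, typicality of $x$ furnishes an open neighborhood $U_h$ of $x$ on which $h$ acts as the identity, so $hz = z$ for every $z\in U_h$. For each $h\in H$ with $hx\neq x$, continuity of $h$ together with Hausdorffness of $X$ (a standing convention in this topological setting, and valid for the intended application to $\partial T$) implies that the fixed-point set of $h$ is closed, and hence $x$ admits an open neighborhood $U_h$ on which $h$ is fixed-point-free. The finite intersection $U := \bigcap_{h\in H} U_h$ is then a neighborhood of $x$ on which $F_z \equiv F_x$, so $B_k(z)\cong B_k(x)$ for every $z\in U$. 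Minimality of the $G$-action now finishes the step: since the orbit $Gy$ is dense in $X$, one may pick $v\in Gy\cap U$, yielding a vertex $v$ of $\Gamma_{y,g_1,\ldots,g_n}$ with $B_k(v)\cong B_k(x)$. Exchanging the roles of $x$ and $y$ and invoking typicality of $y$ produces a vertex $u$ of $\Gamma_{x,g_1,\ldots,g_n}$ with $B_k(u)\cong B_k(y)$. Since $k$ was arbitrary, the orbital graphs are locally isomorphic.

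The only subtle point, and the main obstacle, is the Hausdorff/continuity input used in the non-fixed case: without it the ``fixed-point-free on a neighborhood'' conclusion could fail, and the finite set $H$ would not suffice to control $B_k(z)$ locally near $x$. Once this is in hand, finiteness of $H$ combined with typicality makes ``having the same $B_k$-pattern as $x$'' an open condition, and minimality then plays the role that ergodicity played in Proposition \ref{PropLocIsom}.
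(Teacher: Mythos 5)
Your proof is correct and follows essentially the same route as the paper: show that the set of points $z$ with $B_k(z)$ isomorphic to $B_k(x)$ is open (using typicality for the fixed part and closedness of fixed-point sets for the non-fixed part), then use minimality to find a point of the orbit $Gy$ in that open set, and symmetrize. In fact you are slightly more careful than the paper, which invokes only ``the definition of a typical point'' for both cases (a) and (b) and leaves the Hausdorffness input for the $hx\neq x$ case, as well as the matching of vertex identifications $s_j^{-1}s_i$, implicit.
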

\begin{proof} Let $A$ be the set of all typical points. Fix $r\in\mathbb N$. Denote by $D_r$ the set of finite rooted marked graphs $\Delta$ such that there exists $x\in A$ for which $B_r(x)=\Delta$. For any $\Delta\in D_r$ denote by $X_\Delta(r)$ the set of $x\in A$ such that $B_r(x)=\Delta$. Given a point $x\in A$, $1\leqslant l,m\leqslant r$ and $1\leqslant i\leqslant n$ by definition of a typical point there exists a neighborhood $U(x)$ such that either \vskip 0.1cm
\noindent $a)$ for all $y\in U(x)$ one has $s_m^{-1}g_is_ly=y$ (\ie the vertices $s_ly$ and $s_my$ are connected by an edge marked by $g_i$ in $\Gamma_y$) or \vskip 0.1cm
\noindent $b)$ for all $y\in U(x)$ one has $s_m^{-1}g_is_ly\neq y$ (\ie the vertices $s_ly$ and $s_my$ are not connected by an edge marked by $g_i$ in $\Gamma_y$).\vskip 0.1cm
 \noindent It follows that the sets $X_\Delta(r)$ are open.

 Further, let $x,y\in A$ and $\Delta=B_r(x)$. By minimality of the action of $G$ on $X$ there exists $g\in G$ such that $gy\in X_\Delta$. Thus, $B_r(x)=\Delta=B_r(gy)$ which finishes the proof.
\end{proof}

\noindent Combining these results with Theorem \ref{ThMain} and taking into account that any subexponentially bounded weakly branch group generates a hyperfinite equivalence relation on $\partial T$ (see \cite{GrigNekr:Amen}, Theorem of Section 3) we obtain:
\begin{Co}\label{CoWeak} For any subexponentially bounded weakly branch group $G$ acting on a $d$-regular rooted tree ($d\geqslant 2$) the representations from
$\{\kappa_p:p\in\mathcal P^{*}\}\cup \{\rho_\omega:\omega\in\mathcal O_{R(G)}\}$
are irreducible, pairwise disjoint (not unitarily equivalent), and pairwise weakly equivalent.
\end{Co}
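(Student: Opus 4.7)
The strategy is to treat the three claimed properties separately, with the bulk of the work going into pairwise weak equivalence. Irreducibility of each $\kappa_p$ for $p\in\mathcal P^{*}$ and of each $\rho_\omega$ is already contained in the theorem from \cite{DuGr15} cited just above (together with the Bartholdi--Grigorchuk irreducibility criterion for weakly branch groups obtained from Mackey's criterion), and pairwise disjointness of the full family $\{\kappa_p:p\in\mathcal P^{*}\}\cup\{\rho_\omega:\omega\in\mathcal O\}$ is the second half of the same theorem from \cite{DuGr15}. Since $\mathcal O_{R(G)}\subset\mathcal O$, these two properties pass to the subfamily considered here for free, so the genuine content of the corollary is pairwise \emph{weak} equivalence.

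My plan for weak equivalence is to pivot everything through a single rigid orbit representative. First I would observe that $\rho_x\sim\rho_y$ for any $x,y\in R(G)$. Indeed, $G$ is level-transitive and the cylinders $\partial T_v$ form a basis of the topology of $\partial T$, so $G$ acts minimally on $\partial T$; by the remark in the excerpt each point of $R(G)$ is typical. Proposition \ref{PropRegIsom} then gives, for any finite $g_1,\dots,g_n\in G$, that $\Gamma_{x,g_1,\dots,g_n}$ and $\Gamma_{y,g_1,\dots,g_n}$ are locally isomorphic. Applying Proposition \ref{Prop-equiv-M} with $\{g_1,\dots,g_n\}=\supp(m)$ for each $m\in\mathbb C[G]$ gives $\sigma(\rho_x(m))=\sigma(\rho_y(m))$, and in particular $\|\rho_x(m)\|=\|\rho_y(m)\|$ on positive elements; Proposition \ref{CoWeakCond} upgrades this to $\rho_x\sim\rho_y$.

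Next I would link the Koopman side to the quasi-regular side via the groupoid representation. By the result of \cite{GrigNekr:Amen} quoted in the excerpt, a subexponentially bounded weakly branch group generates a hyperfinite equivalence relation on $(\partial T,\mu_p)$; the action is ergodic, and by Proposition 2 of \cite{DuGr15} the measure $\mu_p$ is quasi-invariant for $p\in\mathcal P^{*}$. Parts $1)$ and $3)$ of Theorem \ref{ThMain} therefore give $\kappa_p\sim\pi_p$ and $\pi_p\sim\rho_x$ for $\mu_p$-almost every $x\in\partial T$. By Lemma \ref{LmSubexpRigid} one has $\mu_p(R(G))=1$, so the full-measure set on which $\pi_p\sim\rho_x$ meets every orbit $\omega\in\mathcal O_{R(G)}$; picking an $x\in R(G)\cap\omega$ and combining with the previous step produces $\kappa_p\sim\rho_\omega$ for every $p\in\mathcal P^{*}$ and every $\omega\in\mathcal O_{R(G)}$. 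Applying this twice yields $\kappa_p\sim\kappa_{p'}$ for all $p,p'\in\mathcal P^{*}$.

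The argument is essentially bookkeeping once Theorem \ref{ThMain}, Proposition \ref{PropRegIsom} and Lemma \ref{LmSubexpRigid} are in hand. The one point that requires genuine care is the compatibility of the null sets appearing in Theorem \ref{ThMain}(1) with the rigidity hypothesis: the exceptional $\mu_p$-null set on which $\pi_p\not\sim\rho_x$ might in principle swallow $R(G)$, which would break the pivot. It is precisely the subexponential boundedness of $G$, through Lemma \ref{LmSubexpRigid}, that makes $R(G)$ conull and hence forces the intersection with the good set to remain nonempty in each orbit of $\mathcal O_{R(G)}$. This is the only subtle step in what is otherwise a direct synthesis of the results already proved in the paper.
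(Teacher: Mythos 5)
Your synthesis is essentially the one the paper intends (the paper gives no detailed argument, only the instruction to combine the \cite{DuGr15} theorem, Lemma \ref{LmSubexpRigid}, Proposition \ref{PropRegIsom}, hyperfiniteness from \cite{GrigNekr:Amen}, and Theorem \ref{ThMain}), and your decomposition into irreducibility/disjointness from \cite{DuGr15} plus weak equivalence via $\rho_x\sim\pi_p\sim\kappa_p$ and the topological pivot through rigid points is correct. One justification in your final paragraph is wrong, however: the fact that $\mu_p(R(G))=1$ does \emph{not} force the conull set where $\pi_p\sim\rho_x$ to meet every orbit $\omega\in\mathcal O_{R(G)}$. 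Each orbit is countable, hence $\mu_p$-null, and $\mathcal O_{R(G)}$ has the cardinality of the continuum, so a conull set can perfectly well miss uncountably many of these orbits entirely; moreover the good set produced in the proof of Proposition \ref{PropLocIsom} is not a union of orbits, so there is no invariance to appeal to. The repair is already contained in your own first step: you only need \emph{one} point $x_0\in R(G)$ lying in the conull good set (such a point exists because the intersection of two conull sets is conull and in particular nonempty), giving $\kappa_p\sim\pi_p\sim\rho_{x_0}$; then for an arbitrary $\omega\in\mathcal O_{R(G)}$ and $y\in\omega$ the chain $\rho_{x_0}\sim\rho_y=\rho_\omega$ follows from Proposition \ref{PropRegIsom} together with Proposition \ref{Prop-equiv-M} and Proposition \ref{CoWeakCond}, since $x_0$ and $y$ are both rigid, hence typical, and the action is minimal by level-transitivity. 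With the pivot restated this way the argument is complete, and the subtlety is resolved by the topological proposition rather than by measure-theoretic genericity.
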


Observe that $\mathcal P^{*}$ and $R(G)$ have cardinality of continuum. Similar to Corollary \ref{CoWeak} results are known for free groups $F_n$, $n\geqslant 2$. Namely, for every $n\geqslant 2$ there exists a continuum of irreducible pairwise disjoint and pairwise weakly equivalent Koopman type representations of $F_n$ (see \eg \cite{KuSt}). However, weak equivalence of these representations uses the fact that the reduced $C^{*}$-algebra of $F_n$ is simple (see \cite{Pow75}). The class of weakly branch groups contains many amenable groups. Recall that groups of intermediate growth are amenable but not elementary amenable. For any amenable group the reduced $C^{*}$-algebra is not simple (see \eg \cite{harpe07}). To the authors' best knowledge Corollary \ref{CoWeak} gives the first example of amenable groups admitting a continuum of pairwise disjoint weakly equivalent irreducible representations. Notice that all $p$-groups of intermediate growth constructed in \cite{Gr84} and \cite{Gr85} (for each prime $p$ there are  $2^{\chi_0}$ such groups) as well as Gupta-Sidki $p$-groups are  bounded (and therefore subexponentially bounded) amenable branch groups and hence satisfy the conditions of Corollary 3.

\section{Examples and proof of Theorem \ref{ThGammaSpec}.}
One of the basic examples of branch groups is the group $\mathcal{G}$ mentioned in the introduction. This group acts on the boundary of the binary rooted tree (which we will denote by $T$) and is generated by elements $a,b,c,d$ satisfying the following recursions: \begin{equation}\label{EqGrigRec}a=\sigma\cdot(\id,\id),\;b=(a,c),\;c=(a,d),\;d=(\id,b),\end{equation} where $\id$ is the identity action (see \eg \cite{Gr00}, \cite{Grig11} or \cite{GNS00}).

Set \begin{equation}\label{EqDel}\Delta=\tfrac{1}{4}(a+b+c+d)\in \mathbb C[\mathcal{G}].\end{equation} Let $\kappa$ be the Koopman representation corresponding to the action of $\mathcal{G}$ on $(\partial T,\mu)$, where $\mu$ is the unique probability $\mathcal G$-invariant measure on $\partial T$ (\ie $(\tfrac{1}{2},\tfrac{1}{2})$ uniform Bernoulli measure on $\partial T=\{0,1\}^\mathbb N$). In \cite{BG} Bartholdi and the second author developed a method for calculating spectra of Hecke type operators associated with self-similar groups and showed the following:
\begin{Th}\label{ThBGDelta} For all $x\in\partial T$ one has $$\sigma(\kappa(\Delta))=\sigma(\rho_x(\Delta))=[-\tfrac{1}{2},0]\cup[\tfrac{1}{2},1]\subset\sigma(\lambda_\mathcal{G}(\Delta)).$$
\end{Th}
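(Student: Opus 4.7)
The statement splits into three parts: the equality $\sigma(\kappa(\Delta))=\sigma(\rho_x(\Delta))$ for every $x\in\partial T$, the inclusion in $\sigma(\lambda_\mathcal{G}(\Delta))$, and the explicit identification of the common spectrum with $[-\tfrac{1}{2},0]\cup[\tfrac{1}{2},1]$. The first two are essentially conceptual consequences of the machinery already assembled in the paper. For the equality, since $\mathcal G$ is amenable (being of intermediate growth), every Schreier graph $\Gamma_x$ of the action of $\mathcal G$ on $\partial T$ is amenable, so Theorem \ref{ThBG} immediately yields $\sigma(\rho_x(\Delta))=\sigma(\kappa(\Delta))$ for every $x\in\partial T$. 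As a backup, Theorem \ref{ThMain} gives the same conclusion $\mu$-a.e.: the action on $(\partial T,\mu)$ is hyperfinite, so part $3)$ yields $\sigma(\kappa(\Delta))=\sigma(\pi(\Delta))$ and part $1)$ yields $\sigma(\rho_x(\Delta))=\sigma(\pi(\Delta))$ for almost every $x$, and Propositions \ref{PropRegIsom} and \ref{Prop-equiv-M}, combined with the minimality of the action, upgrade this to every typical point. For the inclusion $\sigma(\kappa(\Delta))\subset\sigma(\lambda_\mathcal{G}(\Delta))$, amenability of $\mathcal G$ yields $\kappa\prec\lambda_\mathcal{G}$ (equivalently, Kuhn's Theorem \ref{ThKuhn} applied to this Zimmer amenable action), and Proposition \ref{CoWeakCond} translates this into the claimed inclusion of spectra.

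The substantive content is the explicit identification of the spectrum, which I would approach by the operator recursion method dictated by the self-similarity relations \eqref{EqGrigRec}. Under the unitary isomorphism $L^2(\partial T,\mu)\cong L^2(\partial T_0,\mu)\oplus L^2(\partial T_1,\mu)$ between the full space and the two level-one subtree halves, each $\kappa(a),\kappa(b),\kappa(c),\kappa(d)$ becomes a concrete $2\times 2$ block operator: $\kappa(a)$ swaps the halves, while $\kappa(b),\kappa(c),\kappa(d)$ are block-diagonal with blocks drawn from $\{\kappa(a),\kappa(b),\kappa(c),\kappa(d),\mathrm{I}\}$. The scalar equation $4\Delta-\lambda\mathrm{I}=0$ does not close under this recursion, so the standard device is to enrich the problem and work with a multi-parameter pencil of the form $Q(\lambda,\alpha,\beta,\gamma,\delta,\varepsilon)=-\lambda\mathrm{I}+\alpha a+\beta b+\gamma c+\delta d+\varepsilon\mathrm{I}$. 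A Schur-complement reduction of the block decomposition of $Q$ expresses its invertibility on $L^2(\partial T,\mu)$ in terms of invertibility of an operator of the same form on a subtree, thereby inducing a rational renormalization map $F$ on parameter space. The spectrum of $\Delta$ is then the slice of the non-invertibility locus corresponding to the initial parameter values, and one recovers $[-\tfrac{1}{2},0]\cup[\tfrac{1}{2},1]$ by analyzing the backward iteration of this locus under $F$.

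The main obstacle is precisely this last step: choosing the correct enriched family so the recursion genuinely closes, writing $F$ explicitly, and carrying out the dynamical analysis of its iterates (and of the boundary of the invertibility region) to verify that the slice in question is exactly the stated union of two intervals, with the genuine spectral gap $(0,\tfrac{1}{2})$ and endpoints $-\tfrac{1}{2},0,\tfrac{1}{2},1$. Once $F$ has been written down, the remaining work is essentially one- or two-dimensional complex dynamics together with an induction on the level of the tree, but identifying the right enrichment, confirming that no parasitic spectrum is introduced by the reduction, and pinning down the precise endpoints are where the algebraic and dynamical peculiarities of $\mathcal G$ really enter.
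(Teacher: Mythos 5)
Your reduction of the first two assertions is correct and is essentially the intended route: amenability of $\mathcal G$ (a group of intermediate growth) makes every Schreier graph $\Gamma_x$ amenable, so Theorem \ref{ThBG} gives $\sigma(\rho_x(\Delta))=\sigma(\kappa(\Delta))$ for \emph{all} $x\in\partial T$ (your backup via Theorem \ref{ThMain} is unnecessary and only yields the a.e.\ statement before the upgrade), and $\kappa\prec\lambda_{\mathcal G}$ by amenability together with Proposition \ref{CoWeakCond} gives the inclusion into $\sigma(\lambda_{\mathcal G}(\Delta))$. Be aware, though, that the paper does not reprove this theorem: it is quoted from \cite{BG}, and the only place the relevant computation is displayed is the later section on $\kappa_q$, which runs the same machinery for general Bernoulli parameter.

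The genuine gap is the third assertion, which is the entire content of the theorem, and you explicitly leave it open: ``choosing the correct enriched family so the recursion genuinely closes'' and ``carrying out the dynamical analysis'' is the proof, not a footnote to it. Concretely, the enrichment is not a five-parameter pencil. The key algebraic input is $(b+c+d-e)^2=4e$, which (i) dictates keeping the coefficients of $b,c,d$ equal, so the family that closes is the two-parameter pencil $Q(\alpha,\beta)=-\alpha a+b+c+d-(\beta+1)e$, and (ii) gives the outer bound $\Sigma\subset\Omega=\{||\alpha|-|\beta||\leqslant 2,\ |\alpha|+|\beta|\geqslant 2\}$ by writing $\rho(Q(\alpha,\beta))=-\alpha A+2U-\beta\id$ with $A^2=U^2=\id$ (Lemma \ref{LmSigmaAny}). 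The block form \eqref{EqRestr} over $L^2(\partial T_{v_0})\oplus L^2(\partial T_{v_1})$ and a Schur-type factorization \eqref{EqRec} produce the renormalization $F(\alpha,\beta)=\bigl(\tfrac{2\alpha^2}{4-\beta^2},\,\beta+\tfrac{\alpha^2\beta}{4-\beta^2}\bigr)$ with $(\alpha,\beta)\in\Sigma\Leftrightarrow F(\alpha,\beta)\in\Sigma$ for $\beta\neq\pm 2$; the lines $|\alpha-2|=|\beta|$ lie in $\Sigma$ because $B+C+D$ has eigenvalue $3$ and the forward $F$-orbit converges to a point of non-invertibility, and their iterated preimages, the curves $4-\beta^2+\alpha^2-4\alpha\cos(2\pi j/2^n)=0$, are dense in $\Omega$, forcing $\Sigma=\Omega$. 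Only then does slicing at $\alpha=-1$ give $\beta\in[-3,-1]\cup[1,3]$, i.e.\ $\sigma(\kappa(4\Delta))=[-2,0]\cup[2,4]$ and hence $[-\tfrac12,0]\cup[\tfrac12,1]$ for $\Delta$. None of this---the closing of the recursion, the exclusion of parasitic spectrum, the density argument, the endpoints---appears in your write-up, so the identification of the spectrum remains unproved.
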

\noindent
Also in \cite{BG} spectra of Hecke type operators for other groups are calculated. The main tools authors used were operator recursions based on relations of type \eqref{EqGrigRec}, Schur complement and the reduction of the spectral problem to the problem of finding a suitable invariant set for the associated rational map $\mathbb R^n\to\mathbb R^n$ for some $n$.

Notice that the spectrum of $\rho_x(\Delta)$ coincides with the spectrum of the Schreier graph $\Gamma_x$ for every $x\in\partial T$. An important characteristic of the Schreier graph $\Gamma_x$, $x\in\partial T$, is the spectral measure of $\rho_x(\Delta)$ associated to the unit vector $\delta_x\in l^2(Gx)$. For the action of $\mathcal G$ on $\partial T$ these measures were computed in \cite{GrigKryl}.

In this section we compute the spectrum of the Cayley graph of $\mathcal G$ thus proving Theorem \ref{ThGammaSpec}. Also, using operator recursions similar to those from \cite{BG} we prove the results analogous to Theorem \ref{ThBGDelta} for the Koopman representations $\kappa_{(q,1-q)}$ of $\mathcal{G}$, $0<q<1$, and for the groupoid representation of $\mathcal{G}$ corresponding to the invariant Bernoulli measure $\mu$ on $\partial T$. Surprisingly, the spectrum does not depend on the parameter $q$ defining the measure.

\subsection{Spectrum of the Cayley graph of $\mathcal{G}$.}
Here we prove Theorem \ref{ThGammaSpec} which is equivalent to: $$\sigma(\lambda_\mathcal{G}(\Delta))=[-\tfrac{1}{2},0]\cup[\tfrac{1}{2},1],$$ where $\lambda_\mathcal{G}$ is the regular representation of $\mathcal G$.

Introduce a 2-parameter family of elements $Q(\alpha,\beta)=4\Delta-(\alpha+1)a-(\beta+1)e=-\alpha a+b+c+d-(\beta+1)e\in\mathbb C[\mathcal{G}]$, where $e$ is the identity element of $\mathcal{G}$. For a unitary representation $\rho$ of $\mathcal{G}$ let
$\Sigma_\rho$ be the set of pairs $(\alpha,\beta)\in\mathbb R^2$ such that $\rho(Q(\alpha,\beta))$ is not invertible.\begin{figure}\centering\includegraphics[width=0.4\textwidth]{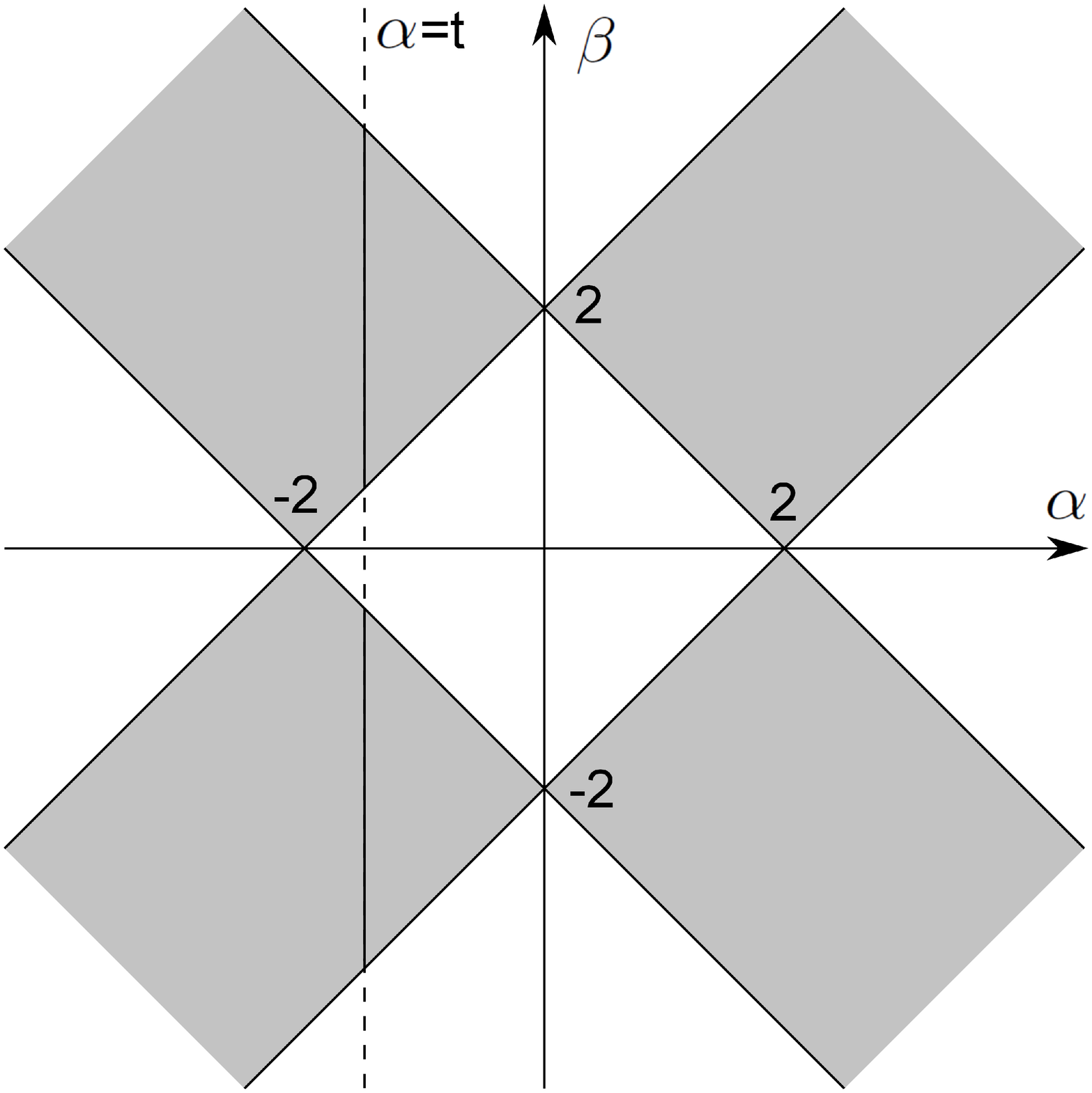}\caption{The set $\Omega$.}\label{FigSigma}\end{figure} Let $\Omega=\{(\alpha,\beta):||\alpha|-|\beta||\leqslant 2,\;||\alpha|+|\beta||\geqslant 2\}$ (see figure \ref{FigSigma}).
\begin{Lm}\label{LmSigmaAny} For any unitary representation $\rho$ one has: $\Sigma_\rho\subset\Omega$.
 \end{Lm}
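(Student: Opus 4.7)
The plan is to reduce invertibility of $\rho(Q(\alpha,\beta))$ to a family of $2\times 2$ matrix calculations via the two-projection structure. First I would observe that $A:=\rho(a)$ is a self-adjoint unitary (since $a^2=e$), so $A=\mathrm{I}-2P$ for some projection $P$. The relations $b^2=c^2=d^2=bcd=e$ force $\rho(b),\rho(c),\rho(d)$ to be commuting self-adjoint unitaries with product $\mathrm{I}$; their joint spectrum therefore lies in $\{(\varepsilon_b,\varepsilon_c,\varepsilon_d)\in\{\pm 1\}^3:\varepsilon_b\varepsilon_c\varepsilon_d=1\}$, so $B:=\rho(b)+\rho(c)+\rho(d)$ satisfies $\sigma(B)\subset\{-1,3\}$. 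Writing $B=4Q-\mathrm{I}$ for a projection $Q$, one obtains
\[
\rho(Q(\alpha,\beta))=-\alpha A+B-(\beta+1)\mathrm{I}=2\alpha P+4Q-(\alpha+\beta+2)\mathrm{I}.
\]

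Next I would invoke the Halmos decomposition of a pair of projections $(P,Q)$: the Hilbert space splits as an orthogonal sum of four ``commuting'' summands on which $P,Q\in\{0,\mathrm{I}\}$ together with a ``generic'' part unitarily equivalent to $K\oplus K$ on which
\[
P=\begin{pmatrix}\mathrm{I}&0\\0&0\end{pmatrix},\qquad Q=\begin{pmatrix}C^{2}&CS\\ SC&S^{2}\end{pmatrix},
\]
for commuting positive injective contractions $C,S$ on $K$ with $C^2+S^2=\mathrm{I}$. On each of the four commuting summands $\rho(Q(\alpha,\beta))$ is scalar multiplication by one of $-(\alpha+\beta+2)$, $2-\alpha-\beta$, $\alpha-\beta-2$, $\alpha-\beta+2$, so non-invertibility on any commuting summand forces $|\alpha\pm\beta|=2$; by the elementary inequalities $||\alpha|-|\beta||\le|\alpha\pm\beta|\le|\alpha|+|\beta|$ every such line lies in $\Omega$.

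For the generic part, after diagonalizing $C$ by functional calculus the operator reduces to a direct integral of $2\times 2$ matrices in the parameter $u=2C^{2}-\mathrm{I}\in[-1,1]$:
\[
M(u)=\begin{pmatrix}2u+\alpha-\beta&2\sqrt{1-u^{2}}\\ 2\sqrt{1-u^{2}}&-2u-\alpha-\beta\end{pmatrix},
\]
with $\operatorname{tr}M(u)=-2\beta$ and $\det M(u)=\beta^{2}-\alpha^{2}-4\alpha u-4$, hence eigenvalues $-\beta\pm\sqrt{\alpha^{2}+4\alpha u+4}$. Zero is an eigenvalue of $M(u)$ for some $u\in[-1,1]$ if and only if $\beta^{2}\in[(|\alpha|-2)^{2},(|\alpha|+2)^{2}]$, i.e.\ $||\alpha|-2|\le|\beta|\le|\alpha|+2$. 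A short case split on whether $|\alpha|\ge 2$ or $|\alpha|<2$ matches this condition exactly with $(\alpha,\beta)\in\Omega$. Hence if $(\alpha,\beta)\notin\Omega$ the determinant $\det M(u)$ stays bounded away from zero on the compact set $[-1,1]$, producing a uniformly bounded inverse of $M(u)$ and invertibility of $\rho(Q(\alpha,\beta))$ on the generic part; combined with the commuting summands this gives $\Sigma_\rho\subset\Omega$. The only real work is the $2\times 2$ determinant computation and matching the resulting condition to $\Omega$; no conceptual obstacle is expected.
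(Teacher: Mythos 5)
Your argument is correct, but it takes a genuinely different (and heavier) route than the paper's. The paper also starts from the identity $(b+c+d-e)^2=4e$, but instead of passing to projections it simply writes $\rho(Q(\alpha,\beta))=-\alpha A+2U-\beta\,\id$ with $A=\rho(a)$ and $U=\rho(\tfrac12(b+c+d-e))$ self-adjoint unitaries, and then disposes of the three components of the complement of $\Omega$ by elementary norm estimates: for $|\alpha|+|\beta|<2$ factor out $U$ and invert $2\id-\alpha UA-\beta U$ by a Neumann series, for $|\alpha|>|\beta|+2$ factor out $A$, and for $|\beta|>|\alpha|+2$ compare with $\beta\,\id$ directly. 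That is a three-line proof requiring nothing beyond ``$\|X\|<|c|$ implies $c\,\id-X$ is invertible.'' Your proof via the Halmos two-projection decomposition costs more but buys more: it identifies $\Sigma_\rho$ exactly as the union of the curves $\beta^2=\alpha^2+4\alpha t+4$ over $t$ in the spectrum of the angle operator $u=2C^2-\id$, together with the lines $|\alpha\pm\beta|=2$ from the commuting summands --- and these curves are precisely the $\gamma_{n,j}$ that the paper later produces by operator recursions in the proof of Proposition \ref{PropSigma}; so your computation would also recover the reverse inclusion $\Sigma_\rho\supset\Omega$ for any representation whose angle operator has full spectrum $[-1,1]$. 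Two points you should make explicit if you write this up: the commutativity of $\rho(b),\rho(c),\rho(d)$ is not a formal consequence of listing the relations but follows from $d=d^{-1}=(bc)^{-1}$ together with $b^2=c^2=e$, which forces $bc=cb$; and the passage from ``$\det M(t)\neq 0$ for all $t\in[-1,1]$'' to invertibility on the generic summand should be justified by observing that $M$ lies in $M_2(C^*(u,\id))\cong M_2(C(\sigma(u)))$ and inverting there by Cramer's rule, the inverse being bounded because $1/\det M$ is continuous on the compact set $\sigma(u)$.
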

 \begin{proof}  Using the basic relation in $\mathcal G$ it is straightforward to verify that $(b+c+d-e)^2=4e\in\mathbb C[\mathcal{G}]$, where $e\in\mathcal{G}$ is the group unit. Let $A=\rho(a)$, $U=\rho(\tfrac{1}{2}(b+c+d-e))$. Then $A$ and $U$ are unitary operators such that $A^2=U^2=\id$. For any $\alpha,\beta\in\mathbb R$ one has:
 $$\rho(Q(\alpha,\beta))=-\alpha A+2U-\beta\id.$$ If $|\alpha|+|\beta|<2$ then
$$-\alpha A+2U-\beta\id=U(2\id-\alpha U A-\beta U) $$ is invertible since $\|\alpha U A+\beta U\|\leqslant |\alpha|+|\beta|<2$. If $|\alpha|>|\beta|+2$ then
$$-\alpha A+2U-\beta\id=A(-\alpha\id +2AU-\beta A)$$ is invertible since $\|2AU-\beta A\|\leqslant 2+|\beta|<|\alpha|$. Finally, if $|\beta|>|\alpha|+2$ then $-\alpha A+2U-\beta\id$ is invertible since $\|-\alpha A+2U\|\leqslant |\alpha|+2<|\beta|$.
\end{proof}
\begin{proof}[Proof of Theorem \ref{ThGammaSpec}.] By construction, the spectrum of $\lambda_\mathcal{G}(4\Delta-e)$ coincides with the intersection of $\Sigma_{\lambda_\mathcal{G}}$ and the line $\alpha=-1$. By Lemma \ref{LmSigmaAny} we obtain $\sigma(\lambda_\mathcal{G}(4\Delta-e))\subset [-3,-1]\cup [1,3]$. It follows that $\sigma(\lambda_\mathcal{G}(\Delta))\subset [-\tfrac{1}{2},0]\cup [\tfrac{1}{2},1]$. The opposite inclusion follows from Theorem \ref{ThBGDelta}.
\end{proof}
In fact, calculations in \cite{BG} show that for any $t\in\mathbb R$ one has $$\sigma(\rho_x(-t a+b+c+d))=\Lambda_t:=(\{\alpha=t\}\cap \Omega)+1$$ (for instance, $\Lambda_t=[t-1,-t-1]\cup[t+3,-t+3]$ if $-2<t<0$) which is a union of two intervals, an interval, or two points (if $t=0$). Arguments similar to the proof of Theorem \ref{ThGammaSpec} show that $\sigma(\lambda_\mathcal{G}(-t a+b+c+d))=\Lambda_t$ for any $t\in\mathbb R$.

\subsection{Spectra of Koopman representations of $\mathcal{G}$.}

The boundary $\partial T$ of a binary rooted tree is homeomorphic to a space of infinite sequences $\{0,1\}^\mathbb{N}$ and hence is homeomorphic to a Cantor set. For any $q\in (0,1)$ define a measure $\nu_q$ on $\{0,1\}$ by $$\nu_q(\{0\})=q,\nu_q(\{1\})=1-q.$$ Let $\mu_q=\nu_q^\mathbb N$ be the corresponding Bernoulli measure on $\partial T$. 
For $q\in(0,1)$ let $\kappa_q$ be the Koopman representation associated to the action of $\mathcal{G}$ on $(\partial T,\mu_q)$ (this is the representation $\kappa_p$ with $p=(q,1-q)$ using the notations of Section \ref{SubsecAp}). We prove that the spectrum of $\kappa_q(\Delta)$ (see \eqref{EqDel}) does not depend on the parameter $q$ and thus coincides with the spectrum given by Theorem \ref{ThBGDelta}. Observe that the representations $\kappa_q$ for $q\neq\tfrac{1}{2}$ are irreducible (see \cite{DuGr15}), but $\kappa_{\frac{1}{2}}$ is a direct sum of countably many finite-dimensional irreducible representations (see \cite{BG}).
\begin{Th}\label{PropKoopGrig} For every $q\in (0,1)$ one has $\sigma(\kappa_q(\Delta))=[-\tfrac{1}{2},0]\cup[\tfrac{1}{2},1]$.
\end{Th}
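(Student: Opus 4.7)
The plan is to deduce Theorem \ref{PropKoopGrig} as a direct corollary of Theorem \ref{ThMain} together with Theorem \ref{ThBGDelta}. The point is that for every $q\in(0,1)$ the action of $\mathcal G$ on $(\partial T,\mu_q)$ satisfies the hypotheses of part $3)$ of Theorem \ref{ThMain}, so that $\sigma(\kappa_q(\Delta))=\sigma(\pi_q(\Delta))$, where $\pi_q$ denotes the groupoid representation associated to this action; part $1)$ of Theorem \ref{ThMain} then identifies $\sigma(\pi_q(\Delta))$ with $\sigma(\rho_x(\Delta))$ for $\mu_q$-almost every $x\in\partial T$; and Theorem \ref{ThBGDelta} gives $\sigma(\rho_x(\Delta))=[-\tfrac12,0]\cup[\tfrac12,1]$ for \emph{every} $x\in\partial T$.

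Thus the only real work is to check three properties of the action $(\mathcal G,\partial T,\mu_q)$. First, it is measure class preserving: for $q=\tfrac12$ the measure $\mu_{1/2}$ is the unique $\aut(T)$-invariant measure and hence $\mathcal G$-invariant, while for $q\neq\tfrac12$ the fact that $\mathcal G$ is subexponentially bounded lets us invoke Proposition 2 of \cite{DuGr15}, which says that subexponentially bounded automorphisms preserve the measure class of $\mu_p$ for every $p\in\mathcal P$. Second, the action is ergodic: for $q=\tfrac12$ this follows from level-transitivity of the action of $\mathcal G$ and the result of \cite{GNS00} recalled in Section \ref{SubsecAp}; for $q\neq\tfrac12$ the Koopman representation $\kappa_q$ is irreducible by the result of \cite{DuGr15} already cited, and irreducibility of $\kappa_q$ forces ergodicity (otherwise the indicator function of a non-trivial invariant set would span a non-zero proper invariant subspace of $L^2(\partial T,\mu_q)$). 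Third, the action is hyperfinite: amenability of $\mathcal G$ (a consequence of intermediate growth) implies that the orbit equivalence relation is Zimmer-amenable with respect to every quasi-invariant probability measure and hence hyperfinite by the theorem of Connes--Feldman--Weiss; one may also invoke directly the theorem of \cite{GrigNekr:Amen} cited in Section \ref{SubsecAp}.

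With these three properties verified, the combination of parts $1)$ and $3)$ of Theorem \ref{ThMain} applied to $m=\Delta$ yields $\sigma(\kappa_q(\Delta))=\sigma(\pi_q(\Delta))=\sigma(\rho_x(\Delta))$ for $\mu_q$-almost every $x\in\partial T$, and Theorem \ref{ThBGDelta} finishes the argument. The striking feature of this proof is that the answer is manifestly independent of $q$: it depends only on the orbit equivalence relation of the action, not on the particular quasi-invariant measure placed on $\partial T$. There is no genuine analytic obstacle in the argument itself — the real difficulty was encapsulated in the general Theorem \ref{ThMain} of this paper and in the earlier computation of Theorem \ref{ThBGDelta}; the present theorem is a payoff. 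The one cosmetic caveat is the case split in the ergodicity verification, which could be avoided by giving a direct uniform proof of ergodicity for every Bernoulli measure on $\partial T$ using transitivity of $\mathcal G$ on the levels of $T$.
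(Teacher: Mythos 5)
Your proof is correct, but it takes a genuinely different route from the paper. The paper proves Theorem \ref{PropKoopGrig} by a direct computation: it writes the operators $A,B,C,D$ of $\kappa_q$ in $2\times 2$ block form with respect to $L^2(\partial T,\mu_q)=\mathcal H_0\oplus\mathcal H_1$, observes that the resulting operator recursions are independent of $q$, and then runs the renormalization argument of \cite{BG} (the map $F$, the curves $\gamma_{n,j}$, and Lemma \ref{LmSigmaAny}) to show that the non-invertibility locus of the whole two-parameter pencil $Q(\alpha,\beta)$ equals $\Omega$ (Proposition \ref{PropSigma}); the theorem is the slice $\alpha=-1$. Your argument instead treats the theorem as a corollary of parts $1)$ and $3)$ of Theorem \ref{ThMain} plus Theorem \ref{ThBGDelta}, and your three verifications (quasi-invariance of $\mu_q$ via Proposition 2 of \cite{DuGr15}, ergodicity via irreducibility of $\kappa_q$ for $q\neq\tfrac12$ and via \cite{GNS00} for $q=\tfrac12$, hyperfiniteness via \cite{GrigNekr:Amen} or amenability of $\mathcal G$) are all legitimate and are in fact exactly the ingredients the paper itself assembles in Corollary \ref{CoWeak}; amusingly, the paper uses your soft route for the groupoid analogue (Proposition \ref{PropGroupGrig}) and reserves the hands-on recursion for the Koopman case as an illustration of the method. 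What your route buys is brevity and a conceptual explanation of the $q$-independence (the spectrum depends only on the orbit equivalence relation); what the paper's route buys is a self-contained computation that does not lean on the hyperfiniteness input from \cite{GrigNekr:Amen} or the irreducibility theorem of \cite{DuGr15}, and that yields the stronger statement $\Sigma=\Omega$ for the full pencil $-\alpha A+B+C+D-(\beta+1)\mathrm{I}$, i.e.\ the spectra of $\kappa_q(ta+b+c+d)$ for all $t$ at once.
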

 Fix $q\in (0,1),q\neq\tfrac{1}{2}$. Set $$A=\kappa_q(a),B=\kappa_q(b),C=\kappa_q(c),D=\kappa_q(d).$$ Consider the operators \begin{equation}\label{EqQab}Q(\alpha,\beta)=\kappa_q(4\Delta-(\alpha+1)a-(\beta+1))=-\alpha A+B+C+D-(\beta+1)\id\end{equation} on $L^2(\partial T,\mu_q)$. Denote by $\Sigma$ the set of pairs $(\alpha,\beta)\in\mathbb R^2$ such that $Q(\alpha,\beta)$ is not invertible. Theorem \ref{PropKoopGrig} is a consequence of the following:
 \begin{Prop}\label{PropSigma} $\Sigma=\Omega$.
 \end{Prop}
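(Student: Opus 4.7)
The inclusion $\Sigma\subset\Omega$ is precisely Lemma \ref{LmSigmaAny}, so the remaining task is to establish $\Omega\subset\Sigma$. Rather than redo an operator recursion in the style of \cite{BG}, the plan is to leverage the general machinery of Section \ref{SubsecQRGr} together with the already-known computation of $\sigma(\rho_x)$ quoted in the remark after Theorem \ref{ThGammaSpec}; the passage from $\rho_x$ to $\kappa_q$ will then be effected by the weak containment furnished by Proposition \ref{Prop-spec-reg-Koop}(1).

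First I would unpack what the calculations of \cite{BG} deliver. They show that for every $t\in\mathbb R$ and $\mu$-almost every $x\in\partial T$ one has $\sigma(\rho_x(-ta+b+c+d))=\Lambda_t=(\{\alpha=t\}\cap\Omega)+1$. Translating via the definition of $Q(\alpha,\beta)$, this is exactly the statement $\Sigma_{\rho_x}=\Omega$ for $\mu$-a.e.\ $x$, where $\Sigma_{\rho_x}$ is the obvious analogue of $\Sigma$ with $\rho_x$ in place of $\kappa_q$. To transfer this from the $\mathcal G$-invariant measure $\mu$ to $\mu_q$, I would invoke Proposition \ref{Prop-equiv-M}, which says that $\Sigma_{\rho_x}$ depends only on the local isomorphism class of the orbital graph $\Gamma_x$. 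Proposition \ref{PropRegIsom}, applied to the minimal action of $\mathcal G$ on $\partial T$, shows that all rigid points yield pairwise locally isomorphic orbital graphs, and Lemma \ref{LmSubexpRigid} guarantees that the set of rigid points has full $\mu_q$-measure. Combining these observations, $\Sigma_{\rho_x}=\Omega$ for $\mu_q$-almost every $x$.

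With this in hand, Proposition \ref{Prop-spec-reg-Koop}(1) gives $\rho_x\prec\kappa_q$ for $\mu_q$-almost every $x$, and Proposition \ref{CoWeakCond} then implies $\sigma(\rho_x(m))\subset\sigma(\kappa_q(m))$ for every $m\in\mathbb C[\mathcal G]$. Applied to $m=Q(\alpha,\beta)$ this yields $\Sigma_{\rho_x}\subset\Sigma$. Picking any $x$ in the full-$\mu_q$-measure set on which both $\Sigma_{\rho_x}=\Omega$ and $\rho_x\prec\kappa_q$ hold, we obtain $\Omega=\Sigma_{\rho_x}\subset\Sigma$, completing the proof. The only delicate step is the transfer of the \cite{BG} identification $\Sigma_{\rho_x}=\Omega$ from $\mu$-generic to $\mu_q$-generic $x$, which could fail a priori since $\mu$ and $\mu_q$ are mutually singular for $q\neq\tfrac12$; the topological genericity provided by the rigid points is what makes this transfer go through, and is the main technical point to verify carefully.
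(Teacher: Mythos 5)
Your argument is correct, but it is a genuinely different proof from the one in the paper. The paper proves $\Omega\subset\Sigma$ directly by the operator-recursion method: writing the generators of $\kappa_q$ in $2\times 2$ block form (observing that the recursion \eqref{EqRestr} is independent of $q$), deriving the renormalization identity \eqref{EqRec} for $Q(\alpha,\beta)$, showing that $\Sigma$ is invariant under the rational map $F$ (Lemma \ref{PropInv}), seeding with the curve $|\alpha-2|=|\beta|$ (Lemma \ref{LmQ}), and pulling back along $F$ to obtain the family of curves $\gamma_{n,j}$ whose union is dense in $\Omega$, so that closedness of $\Sigma$ finishes the argument. You instead import the full two-parameter identification $\Sigma_{\rho_x}=\Omega$ from \cite{BG}, transfer it from $\mu$-generic to $\mu_q$-generic $x$ (your use of rigid/typical points via Lemma \ref{LmSubexpRigid}, Proposition \ref{PropRegIsom} and Proposition \ref{Prop-equiv-M} is the right way to handle the mutual singularity of $\mu$ and $\mu_q$, and is indeed the one point that needs care), and then apply Proposition \ref{Prop-spec-reg-Koop}(1) together with Proposition \ref{CoWeakCond} to get $\Omega=\Sigma_{\rho_x}\subset\Sigma$; the reverse inclusion is Lemma \ref{LmSigmaAny} in both proofs. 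Your route is shorter and is a nice demonstration of the main theorem's machinery in action, but it rests on the external \cite{BG} computation of $\sigma(\rho_x(-ta+b+c+d))$ for all $t$ (itself obtained by the same recursion technique) and bypasses the point the section is making, namely that the operator recursion can be run directly on the irreducible, infinite-dimensional representation $\kappa_q$ and that the resulting recursion, hence the spectrum, does not depend on $q$. Both arguments are valid and non-circular.
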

  In \cite{BG} the authors proved Proposition \ref{PropSigma} in the case $q=\tfrac{1}{2}$. For the proof they considered restrictions $Q_n(\alpha,\beta)$ of $Q(\alpha,\beta)$ on $\mathcal{G}$-invariant finite dimensional subspaces of $L^2(\partial T,\mu)$ and constructed operator recursions for $Q_n(\alpha,\beta)$ to describe spectra of $Q_n(\alpha,\beta)$ and $Q(\alpha,\beta)$. In the case $q\neq\frac{1}{2}$ the representation $\kappa_q$ is irreducible and so does not have invariant subspaces. We need to modify arguments from \cite{BG} and use operator recursions for infinite-dimensional Hilbert spaces.

Recall that $V_n$ is the set of vertices of level $n$ in $T$. For every $n$ encode vertices of $V_n$ by $\{0,1\}^n$ so that for every vertex $v=x_1x_2\ldots x_n\in V_n$ one has: $$\mu_q(\partial T_j)=q^{1-\sum x_i}(1-q)^{\sum x_i}.$$
 Let $v_0$ and $v_1$ be the vertices of the first level of $T$. Denote
$$\mathcal H=L^2(\partial T,\mu_q),\;\;\mathcal H_j=\{f\in\mathcal H:\supp(f)\subset \partial T_{v_j}\},\;\;\text{where}\;\;j=0,1.$$  Observe that $\mathcal H_0$ and $\mathcal H_1$ are isomorphic to $\mathcal H$ via the isometries $I_j:\mathcal H_j\to \mathcal H,j=0,1,$ given by:
$$(I_0f)(x)=\sqrt qf(0x),\;\;(I_1f)(x)=\sqrt{1-q}f(1x),$$ where $x\in\partial T$ is encoded by sequences from $\{0,1\}^\infty$.
Using the decomposition $\mathcal H=\mathcal H_0 \oplus \mathcal H_1$ and identifying with $\mathcal H$ the spaces $\mathcal H_i$ using the isometries $I_i$, $i=0,1$, we can write every operator on $\mathcal H$ in a $2\times 2$ block matrix form whose entries are also operators on $\mathcal H$.
The operators of the Koopman representation $\kappa_q$ corresponding to the generators of $\mathcal{G}$ can be written as follows:
\begin{gather}\label{EqRestr}\begin{split}A=\begin{bmatrix}0&\id\\ \id&0
\end{bmatrix},\;\;
B=\begin{bmatrix}A&0\\0&C
\end{bmatrix},\\
C=\begin{bmatrix}A&0\\0&D
\end{bmatrix},\;\;
D=\begin{bmatrix}\id&0\\0&B
\end{bmatrix}.\end{split}\end{gather} In particular, the recursions do not depend on parameter $q$. It follows that the operator $Q(\alpha,\beta)$ can be written as follows:
$$Q(\alpha,\beta)=\begin{bmatrix}2A-\beta\id&-\alpha \id \\-\alpha \id&B+C+D-(\beta+1)\id
\end{bmatrix}.$$ Notice that $(2A-\beta\id)(2A+\beta\id)=(4-\beta^2)\id$. Assume that $\beta\neq \pm 2$. Straightforward calculations show that
\begin{equation}\label{EqRec}Q(\alpha,\beta)\begin{bmatrix}\id&\frac{\alpha(2A+\beta\id)}{4-\beta^2}\\0&\id
\end{bmatrix}=
\begin{bmatrix}2A-\beta\id&0\\-\alpha\id&Q(\frac{2\alpha^2}{4-\beta^2},\beta+\frac{\alpha^2\beta}{4-\beta^2})
\end{bmatrix}.\end{equation}
Following \cite{BG} introduce a map on $\mathbb R^2\setminus\mathbb R\times\{\pm 2\}$  by
$$F(\alpha,\beta)=(\frac{2\alpha^2}{4-\beta^2},\beta+\frac{\alpha^2\beta}{4-\beta^2}).$$ Also, for $n\in\mathbb N$ set $(\alpha_n,\beta_n)=F^n(\alpha,\beta)$.
Since $\sigma(A)=\{-1,1\}$ we obtain
\begin{Lm}\label{PropInv} If $\beta\neq\pm 2$ then $(\alpha,\beta)\in\Sigma$ if and only if $F(\alpha,\beta)\in\Sigma$.
\end{Lm}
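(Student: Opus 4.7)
The plan is to read off the claim directly from the block-operator identity \eqref{EqRec}. Write that identity schematically as
\[
Q(\alpha,\beta)\,P \;=\; T, \qquad
P=\begin{bmatrix}\id & \tfrac{\alpha(2A+\beta\id)}{4-\beta^{2}}\\ 0 & \id\end{bmatrix},\qquad
T=\begin{bmatrix}2A-\beta\id & 0\\ -\alpha\id & Q(F(\alpha,\beta))\end{bmatrix}.
\]
Since $P$ is upper triangular with identities on the diagonal, it is invertible (with inverse the same block matrix but with a minus sign in the upper right), so $Q(\alpha,\beta)$ is invertible on $\mathcal{H}$ if and only if $T$ is invertible on $\mathcal{H}\oplus\mathcal{H}$.

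Next I would analyse $T$. First, the operator $A=\kappa_q(a)$ is unitary with $A^{2}=\id$ and, reading off the matrix form in \eqref{EqRestr}, $A\neq\pm\id$; hence $\sigma(A)=\{-1,1\}$ and so $\sigma(2A-\beta\id)=\{2-\beta,\,-2-\beta\}$. Under the hypothesis $\beta\neq\pm 2$, this shows $X:=2A-\beta\id$ is invertible. Then $T$ admits the triangular factorisation
\[
T=\begin{bmatrix}\id & 0\\ -\alpha X^{-1} & \id\end{bmatrix}\begin{bmatrix}X & 0\\ 0 & Q(F(\alpha,\beta))\end{bmatrix},
\]
and since the left factor is invertible, $T$ is invertible if and only if the block-diagonal factor is, i.e.\ if and only if $Q(F(\alpha,\beta))$ is invertible.

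Chaining the two equivalences gives $Q(\alpha,\beta)$ invertible $\iff Q(F(\alpha,\beta))$ invertible, which is precisely the statement that $(\alpha,\beta)\in\Sigma$ iff $F(\alpha,\beta)\in\Sigma$. There is no genuine obstacle here: the only points that require a moment's care are (i) verifying that $\sigma(A)=\{-1,1\}$ so that the assumption $\beta\neq\pm 2$ really is what is needed to invert $2A-\beta\id$, and (ii) justifying that invertibility of a lower-triangular $2\times 2$ block operator with invertible $(1,1)$ entry reduces to invertibility of the $(2,2)$ entry — both immediate from the explicit factorisation above.
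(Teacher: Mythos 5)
Your proof is correct and is exactly the argument the paper intends: the paper's one-line proof ("Since $\sigma(A)=\{-1,1\}$ we obtain...") is just a compressed version of your explicit factorisation of the identity \eqref{EqRec} into invertible triangular factors times $\mathrm{diag}(2A-\beta\mathrm{I},\,Q(F(\alpha,\beta)))$. Nothing further is needed.
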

Next, let us prove another auxiliary statement.
\begin{Lm}\label{LmQ} $\Sigma\supset \{(\alpha,\beta):|\alpha-2|=|\beta|\}$.
\end{Lm}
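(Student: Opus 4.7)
The curve $\{(\alpha,\beta):|\alpha-2|=|\beta|\}$ is the union of the two lines $\ell_+:\alpha+\beta=2$ and $\ell_-:\alpha-\beta=2$ meeting at $(2,0)$. On these lines the operator $Q$ takes the factored form
\[
Q\big|_{\ell_+}=2(U-\id)-\alpha(A-\id),\qquad Q\big|_{\ell_-}=2(U+\id)-\alpha(A+\id).
\]
Consequently, in order to show that $\ell_\pm\subset\Sigma$, it suffices to produce in $L^2(\partial T,\mu_q)$ a sequence of unit vectors $v_N^\pm$ satisfying $Av_N^\pm=\pm v_N^\pm$ exactly and $\|(U\mp\id)v_N^\pm\|\to 0$; then $\|Q(\alpha,\beta)v_N^\pm\|=2\|(U\mp\id)v_N^\pm\|\to 0$ uniformly in $\alpha$ along $\ell_\pm$, which gives $0\in\sigma(Q(\alpha,\beta))$.

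The plan is to write down such $v_N^\pm$ explicitly using the ansatz
\[
v_N^\pm(x)=(2q)^{-N/2}\bigl(\pm\sqrt{q/(1-q)}\bigr)^{n_N(x)},\qquad n_N(x)=x_1+\cdots+x_N.
\]
The normalization $\|v_N^\pm\|=1$ follows from the one-dimensional identity $\int(q/(1-q))^x\,d\nu_q(x)=2q$ applied $N$ times. The equality $Av_N^\pm=\pm v_N^\pm$ is a direct check: the Radon--Nikodym factor $\sqrt{\nu_q(1-x_1)/\nu_q(x_1)}$ that appears in $\kappa_q(a)$ is precisely cancelled by the shift of $n_N$ under $x_1\mapsto 1-x_1$, while the sign $\pm$ is produced (independently of $x_1$) by the base $\pm\sqrt{q/(1-q)}$.

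For the $U$-side I would use the recursion \eqref{EqRestr} to identify $U$ with the Koopman operator $\kappa_q(u)$ of the measure-class preserving involution $u$ of $\partial T$ that flips the coordinate immediately following the initial run of $1$'s in $x$. On the ``good'' set $G_N=\partial T\setminus\partial T_{1^{N-1}}$ this flipped coordinate lies in $\{2,\ldots,N\}$, so the very same Radon--Nikodym-vs-exponent cancellation as above forces $Uv_N^\pm=\pm v_N^\pm$ identically on $G_N$. On the ``bad'' complement $\partial T_{1^{N-1}}$ the map $u$ acts only on coordinates of index $>N$, so $v_N^\pm(u(x))=v_N^\pm(x)$ there, and $Uv_N^\pm$ differs from $\pm v_N^\pm$ only by a bounded Radon--Nikodym factor. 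Iterating the identity $\int(q/(1-q))^x\,d\nu_q(x)=2q$ gives $\int_{\partial T_{1^{N-1}}}|v_N^\pm|^2\,d\mu_q=2^{1-N}$, hence $\|(U\mp\id)v_N^\pm\|=O(2^{-N/2})\to 0$.

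The main obstacle is guessing the ansatz: the multiplicative form $\phi^{n_N(x)}$ with $\phi^2=q/(1-q)$ is forced by the requirement that the cancellation with the Radon--Nikodym factor in $\kappa_q(a)$ be exact, and the choice of sign $\phi=\pm\sqrt{q/(1-q)}$ then produces $\pm 1$-eigenvectors of $A$ which, through the self-similar recursion for $U$, automatically become approximate $\pm 1$-eigenvectors of $U$ on deeper and deeper levels of the tree. Once the ansatz is in hand, everything reduces to elementary measure-theoretic bookkeeping.
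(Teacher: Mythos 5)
Your argument is correct, but it takes a genuinely different route from the paper. The paper's proof stays inside the renormalization framework of the section: it uses $(B+C+D-\id)^2=4\id$ to produce an exact eigenvector $\eta$ of $B+C+D$, observes that the line $|\alpha-2|=|\beta|$ is invariant under the map $F$ with $(\alpha_n,\beta_n)\to(0,-2)$ exponentially fast (so that $Q(\alpha_n,\beta_n)\eta\to 0$), and then uses the block identity \eqref{EqRec} to pull this back to approximate kernel vectors of $Q(\alpha,\beta)$ at the original parameters, with control on norms. You instead bypass the recursion entirely: you factor $Q$ on the two lines as $2(U\mp\id)-\alpha(A\mp\id)$ and exhibit explicit unit vectors that are exact $\pm1$-eigenvectors of $A$ and approximate $\pm1$-eigenvectors of $U$, exploiting that $U=\tfrac12\kappa_q(b+c+d-e)$ is itself the Koopman operator of a measurable involution of $\partial T$ (flipping, for $x=1^k0y$, the coordinate of index $k+2$ --- your phrase ``immediately following the initial run of $1$'s'' is off by one, but your stated range $\{2,\dots,N\}$ on the good set is the correct one, so nothing breaks). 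I checked the normalization $\|v_N^\pm\|=1$, the exact identities $Av_N^\pm=\pm v_N^\pm$ and $Uv_N^\pm=\pm v_N^\pm$ on $\partial T\setminus\partial T_{1^{N-1}}$, and the tail estimate $\int_{\partial T_{1^{N-1}}}|v_N^\pm|^2\,d\mu_q=2^{1-N}$; all are right, and since $Q$ is self-adjoint, failure to be bounded below gives non-invertibility. Your approach is more self-contained and makes transparent why exactly these two lines occur (they are the degenerate ``angle zero'' locus for the pair of reflections $A,U$), at the cost of a hands-on construction specific to $\kappa_q$; the paper's approach is less explicit but reuses the recursion \eqref{EqRec} and the dynamics of $F$, which are needed anyway for Lemma \ref{PropInv} and the density argument with the curves $\gamma_{n,j}$. (Incidentally, in the paper's version the relevant eigenvector should satisfy $(B+C+D+\id)\eta=0$ rather than $(B+C+D-3\,\id)\eta=0$, since $\beta_n\to-2$; your construction sidesteps this issue.)
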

\begin{proof}  Since $(B+C+D-\id)^2=4\id$ and clearly $B+C+D-\id$ is not a scalar operator we obtain that $\sigma(B+C+D-\id)=\{-2,2\}$. Thus, $-1$ and $3$ are eigenvalues of the operator $B+C+D$. Let $\eta\neq 0,\eta\in\mathcal H$ be such that $(B+C+D-3)\eta=0$. Let $\alpha=\beta+2$. Then $\alpha_1=\beta_1+2$ and $\beta_1=\frac{4\beta}{2-\beta}$. The map $$h(z)=\frac{4z}{2-z}$$ has two fixed points on the Riemann sphere: a repelling fixed point $0$ and an attracting fixed point $-2$, and for every $z\neq 0$  $h^n(z)\to -2$ exponentially fast. Thus, if $\beta\neq 0$, then $$\beta_n\to -2,\;\;\alpha_n\to 0$$ exponentially fast. It follows that $Q(\alpha_n,\beta_n)\eta\to 0$ exponentially fast.

Further, let $\xi\in\mathcal H$. Applying the operator in \eqref{EqRec} to the block vector $\begin{bmatrix}0\\ \xi\end{bmatrix}$, where $\xi\in\mathcal H$, we obtain:
$$ Q(\alpha,\beta)\begin{bmatrix}\frac{\alpha(2A+\beta\id)}{4-\beta^2}\xi\\ \xi\end{bmatrix}=
\begin{bmatrix}
0\\Q(\alpha_1,\beta_1)\xi
\end{bmatrix}.$$ Thus, there exists $\xi_1$ such that $\|\xi_1\|\geqslant\|\xi\|$ and $\|Q(\alpha_1,\beta_1)\xi\|=\|Q(\alpha,\beta)\xi_1\|$. By induction, we get that $\|Q(\alpha,\beta)\eta_n\|=\|Q(\alpha_n,\beta_n)\eta\|$ for some $\eta_n$ with $\|\eta_n\|\geqslant\|\eta\|$. Since $\|Q(\alpha_n,\beta_n)\eta\|$ converges to $0$, we obtain that $Q(\alpha,\beta)$ is not invertible. The case $\alpha=2-\beta$ can be treated similarly.
\end{proof}

\begin{proof}[Proof of Proposition \ref{PropKoopGrig}]
Following \cite{BG} consider the curves
$$\gamma_{n,j}=\{(\alpha,\beta):4-\beta^2+\alpha^2-4\alpha\cos(\tfrac{2\pi j}{2^n})=0\}.$$ Observe that $\gamma_{0,j}=\{(\alpha,\beta):|\alpha-2|=|\beta|\}$. Straightforward computations show that $F(\gamma_{n,j})\subset\gamma_{n-1,j}$ for all $n,j\in\mathbb N$. From Lemmas \ref{PropInv} and \ref{LmQ} taking into account that $\Sigma$ is closed we obtain that $\Sigma\supset \gamma_{n,j}$ for all $n,j$. Notice that the curve $\gamma_{n,j}$ can be written as:
$$\beta=\pm \sqrt{\alpha^2-4\alpha\cos(\tfrac{2\pi j}{2^n})+4}.$$ Since the union of curves $\gamma_{n,j}$ is dense in the region
$$S=\{(\alpha,\beta):||\alpha|-|\beta||\leqslant 2,\;||\alpha|+|\beta||\geqslant 2\}$$ we obtain that $\Sigma\supset S$. From Lemma \ref{LmSigmaAny} we deduce that $\Sigma=S$, which finishes the proof.\end{proof}

\subsection{Spectra of groupoid representations of $\mathcal{G}$.}
Let $\pi$ be the groupoid representation of $\mathcal{G}$ corresponding to the action of $\mathcal{G}$ on $(\partial T,\mu)$, where $\mu=\{\tfrac{1}{2},\tfrac{1}{2}\}^{\otimes\mathbb N}$ is the invariant Bernoulli measure on $\partial T$. The following Proposition follows from Theorem \ref{ThBGDelta} and Theorem \ref{ThMain}.
\begin{Prop}\label{PropGroupGrig} $\sigma(\pi(\Delta))=[-\tfrac{1}{2},0]\cup[\tfrac{1}{2},1]$.
\end{Prop}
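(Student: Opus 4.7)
The plan is to combine Theorem \ref{ThBGDelta} with part $1)$ of Theorem \ref{ThMain} in essentially one step. The setup is already in place: the action of $\mathcal{G}$ on $(\partial T,\mu)$ is a measure class preserving action of a countable group on a standard probability space (the Cantor set with the $\{\tfrac12,\tfrac12\}$-Bernoulli measure), and it is ergodic because $\mathcal{G}$ acts transitively on every level $V_n$ of $T$, which, as recalled in Section \ref{SubsecAp}, is equivalent to ergodicity of the uniform measure on $\partial T$.

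With these hypotheses satisfied, part $1)$ of Theorem \ref{ThMain} applied with $m=\Delta\in\mathbb C[\mathcal G]$ yields $\sigma(\rho_x(\Delta))=\sigma(\pi(\Delta))$ for $\mu$-almost every $x\in\partial T$. In particular there exists at least one such point $x$, and for that $x$ Theorem \ref{ThBGDelta} gives
\[
\sigma(\rho_x(\Delta))=[-\tfrac12,0]\cup[\tfrac12,1].
\]
Chaining the two equalities produces the asserted identity $\sigma(\pi(\Delta))=[-\tfrac12,0]\cup[\tfrac12,1]$.

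There is really no obstacle: the entire content is that Theorem \ref{ThMain} lets us transfer the already-known spectrum of the Schreier graph operator $\rho_x(\Delta)$ to the groupoid operator $\pi(\Delta)$. If one prefers to invoke part $3)$ of Theorem \ref{ThMain} instead, the argument is equally short: since $\mathcal{G}$ is a bounded (hence subexponentially bounded) weakly branch group, the orbit equivalence relation on $(\partial T,\mu)$ is hyperfinite (as used in the derivation of Corollary \ref{CoWeak}), so $\sigma(\kappa(\Delta))=\sigma(\pi(\Delta))$, and one then quotes $\sigma(\kappa(\Delta))=[-\tfrac12,0]\cup[\tfrac12,1]$ from Theorem \ref{ThBGDelta}. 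Either route uses nothing beyond the two cited theorems and the already-established ergodicity (or hyperfiniteness) of the action.
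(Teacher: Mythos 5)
Your argument is correct and is exactly the paper's primary justification: the paper states that the Proposition ``follows from Theorem \ref{ThBGDelta} and Theorem \ref{ThMain}'' (the direct operator-recursion computation it then sketches is offered only as an additional illustration of the method, not as the necessary proof). Your verification of ergodicity via level-transitivity, and the chaining of $\sigma(\pi(\Delta))=\sigma(\rho_x(\Delta))=[-\tfrac12,0]\cup[\tfrac12,1]$, is precisely the intended route.
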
 \noindent To give another illustration of the method of operator recursions we provide a sketch of a direct proof of Proposition \ref{PropGroupGrig}.
\begin{proof}
Let $v_0$ and $v_1$ be the vertices of the first level of $T$. For $i,j\in\{0,1\}$ introduce a subspace $$\mathcal H_{i,j}=\{\eta\in L^2(\mathcal R,\nu):\supp(\eta)\subset \partial T_{v_i}\times\partial T_{v_j}\}.$$ One has:
$$L^2(\mathcal R,\nu)=\mathcal H_{0,0}\oplus \mathcal H_{1,0}\oplus \mathcal H_{0,1}\oplus \mathcal H_{1,1}.$$ Recall that $x,y\in\partial T$ belong to the same orbit by $\mathcal{G}$ if and only if $x_i=y_i$ for all large enough $i$ (see \cite{Grig11}, Theorem 7.3). This implies that the subspaces $\mathcal H_{i,j}$ are canonically isomorphic to $L^2(\mathcal R,\nu)$. Thus, every operator acting on $L^2(\mathcal R,\nu)$ can be written in a $4\times 4$ block matrix form with entries operators on $L^2(\mathcal R,\nu)$. Set $$A=\pi(a),B=\pi(b),C=\pi(c),D=\pi(d).$$ It is straightforward to check that every operator $X$ from the latter list can be written as $$X=\begin{bmatrix}Y&0_2\\0_2&Y
\end{bmatrix},$$ where $0_2$ is the $2\times 2$ zero matrix and $Y$ is the $2\times 2$ block matrix  representation for the corresponding operator from \eqref{EqRestr}. Similarly to \eqref{EqQab} introduce an operator \begin{equation*}\label{EqQab1}Q(\alpha,\beta)=\pi(4\Delta-(\alpha+1)a-(\beta+1))=-\alpha A+B+C+D-(\beta+1)\id\end{equation*} in $L^2(\mathcal R,\nu)$ and denote by $\Sigma$ the set of pairs $(\alpha,\beta)\in\mathbb R^2$ such that $Q(\alpha,\beta)$ is not invertible. One has:
 $$Q(\alpha,\beta)=\begin{bmatrix}Y&0_2\\0_2&Y
\end{bmatrix},\;\;\text{where}\;\;Y=\begin{bmatrix}2A-\beta\id&-\alpha \id \\-\alpha \id&B+C+D-(\beta+1)\id
\end{bmatrix}.$$ Similarly to Proposition \ref{PropSigma} one can show that $\Sigma=\Omega$ from which the statement of Proposition \ref{PropGroupGrig} follows easily.
\end{proof}
\subsection*{Acknowledgement} The authors are grateful to Maria Gabriella Kuhn for useful discussions.


\end{document}